\documentclass[a4paper, 12pt, twoside, onecolumn ]{article}
\usepackage[utf8]{inputenc}
\usepackage{amsmath, amssymb, amsthm,mathrsfs}
\usepackage{enumerate} 
\usepackage{graphicx}
\usepackage[top=1in, bottom=1.3in, left=1.3in, right=1in]{geometry}
\renewcommand{\baselinestretch}{1.2}
\usepackage{lmodern}
\usepackage[all]{xy}
\usepackage{tikz-cd} 
 \usepackage{float}
 \usepackage{hyperref}
 \usepackage{accents}
 \usepackage{verbatim}
 \usepackage{import}
 \usepackage{multirow}
 \usepackage{multicol}
\date{}

\newtheorem{defn}{\bf Definition}[section] 
\newtheorem{exam}[defn]{\bf Example}

\newtheorem{lem}[defn]{\bf Lemma}
\newtheorem{thm}[defn]{\bf Theorem} 

\newtheorem{rmk}{\bf Remark}

\numberwithin{equation}{section}
\usepackage[pagewise]{lineno}

\newcommand{\footremember}[2]{%
    \footnote{#2}
    \newcounter{#1}
    \setcounter{#1}{\value{footnote}}%
}
\newcommand{\footrecall}[1]{%
    \footnotemark[\value{#1}]%
} 
\title{Finite Element Analysis of Time-Fractional Integro-differential Equation of Kirchhoff type for Non-homogeneous Materials}
\author{%
  Lalit Kumar \footremember{alley}{Department of Mathematics, Indian Institute of Technology Bombay, Mumbai-400076, India. lalitccc528@gmail.com}%
  \and Sivaji Ganesh Sista\footremember{trailer}{Department of Mathematics, Indian Institute of Technology Bombay, Mumbai-400076, India. siva@math.iitb.ac.in}%
  \and Konijeti Sreenadh\footrecall{alley} \footnote{Department of Mathematics, Indian Institute of Technology Delhi, New Delhi-110016, India. sreenadh@math.iitd.ac.in}%
  }
\date{}
\begin{document}
\maketitle

\begin{abstract}
	\noindent In this paper, we study a time-fractional initial-boundary value problem of Kirchhoff type involving memory term for non-homogeneous materials \eqref{6-10-22-1}. The energy argument is applied  to derive the a priori bounds on the solution of  the problem \eqref{6-10-22-1}. Consequently, we prove the  existence and  uniqueness of the  weak solution to  the considered problem. We keep the time variable continuous and discretize the space domain using a conforming FEM to obtain the semi discrete formulation of the problem under consideration. The semi discrete error analysis is carried out by modifying the standard Ritz-Volterra projection operator. To obtain the  numerical solution of the problem \eqref{6-10-22-1} efficiently, we develop a new linearized L1 Galerkin FEM.  This numerical scheme is shown to have a convergence rate of $O(h+k^{2-\alpha})$, where $\alpha~ (0<\alpha<1)$ is the fractional derivative exponent,  $h$ and $k$ are the  discretization parameters in the  space and time directions respectively. Further, this convergence rate is improved in the time direction by proposing a novel linearized  L2-1$_{\sigma}$  Galerkin FEM. We prove that this  numerical  scheme has an  accuracy rate of $O(h+k^{2})$.  Finally, a  numerical experiment is  conducted  to validate our theoretical claims.
	\end{abstract}
\textbf{Keywords:}
Nonlocal, Finite element method (FEM),  Fractional time derivative, Fractional Crank-Nicolson scheme, Integro-differential equation.\\
\textbf{AMS subject classification.} 34K30, 26A33, 65R10, 60K50.

\section{Introduction}
Let  $\Omega$ be a convex and bounded subset of $\mathbb{R}^{d}~(d\geq 1)$ with smooth boundary $\partial \Omega$ and $[0,T]$ is a fixed finite time interval. We consider the following integro-differential equation of Kirchhoff type involving fractional time derivative of order $\alpha ~(0<\alpha <1)$ for non-homogeneous materials 
\begin{align*}\tag{$D_{\alpha}$}\label{6-10-22-1}
^{C}D^{\alpha}_{t}u- \nabla\cdot\left(M\left(x,t,\|\nabla u\|^{2}\right)\nabla u\right)=f(x,t)+\int_{0}^{t}b(x,t,s)u(s)~ds \quad \text{in}~ \Omega \times (0,T],
\end{align*}
with initial and boundary conditions
\begin{align*}
u(x,0)&=u_{0}(x) \quad \text{in} ~\Omega,\\
u(x,t)&=0 \quad \text{on} ~\partial \Omega \times [0,T],
\end{align*}
where  $u:=u(x,t) :\Omega \times [0,T]\rightarrow \mathbb{R} $ is the unknown function, $M:\bar{\Omega}\times [0,T]\times (0,\infty)\rightarrow (0,\infty),$ initial data $ u_{0}$, source term $f$ are known functions and $b(x,t,s)$ is a memory operator to be defined in Section \ref{12-5-4}. The notation $^{C}D_{t}^{\alpha}u$ in the problem  \eqref{6-10-22-1} is the fractional time derivative of order $\alpha$ in the Caputo sense, which is defined in \cite{podlubny1998fractional} as
\begin{equation}\label{LS1}
^{C}D_{t}^{\alpha}u =\frac{1}{\Gamma(1-\alpha)} \int_{0}^{t}\frac{1}{(t-s)^{\alpha}}\frac{\partial u}{\partial s}(s)~ds,
\end{equation}
where $\Gamma(\cdot)$ denotes the gamma function.
\par For the case $\alpha=1$, authors in  \cite{kumar2020finite} proposed a linearized backward Euler-Galerkin FEM and a linearized Crank-Nicolson-Galerkin FEM with an accuracy rate of $O(h+k)$ and $O(h+k^{2})$, respectively.
\par There are various notions of  fractional derivatives other than the Caputo derivative, which include Riemann-Liouville, Gr\"{u}nwald-Letnikov, Weyl, Marchaud, and Riesz fractional derivatives \cite{miller1993introduction, podlubny1998fractional}. Among these,  Caputo fractional derivative and Riemann-Liouville fractional derivative are the  most commonly used in the literature \cite{podlubny1998fractional}. These two fractional derivatives are related to each other by the following relation for absolutely continuous function $u$, see \cite{podlubny1998fractional}
\begin{equation}\label{1.2}
^{C}D_{t}^{\alpha}u(t)=~^{R}D_{t}^{\alpha}\left(u(t)-u(0)\right):=\partial^{\alpha}_{t}u(t),
\end{equation} 
where  $^{R}D_{t}^{\alpha}u$ is the Riemann-Liouville fractional derivative defined by 
\begin{equation}
^{R}D_{t}^{\alpha}u(t) =\frac{1}{\Gamma(1-\alpha)}\frac{d}{dt}\int_{0}^{t}\frac{1}{(t-s)^{\alpha}}u(s)~ds .
\end{equation}
It is worth noting that  fractional derivatives other than the
 Caputo fractional derivative  require initial condition containing the limiting value of fractional derivative at $t=0$ \cite{podlubny1998fractional}, which has no physical interpretation. An advantage of choosing Caputo fractional derivative in the problem \eqref{6-10-22-1} is that it allows the initial and boundary conditions in the same way as those for integer order differential equations. 
 \par There are many physical and biological  processes in which the mean-squared displacement of the particle motion grows only sublinearly with time $t$, instead of linear growth. For instance, acoustic wave propagation in viscoelastic materials \cite{mainardi2010fractional}, cancer invasion system \cite{manimaran2019numerical}, anomalous diffusion transport \cite{metzler2000random},  which cannot be  described  accurately by classical  models having integer order derivatives.  Therefore the study of fractional differential  equations has  evolved immensely in  recent years.
 \par Mathematical problems involving fractional time derivatives  have been studied by many researchers, for instance, see   \cite{giga2017well, huang2005time, schneider1989fractional}. 
Analytical solutions of fractional differential equations are expressed in terms of Mittag-Leffler function, Fox $H$-functions, Green functions, and hypergeometric functions. Such special functions are more complex to compute, which restrict the applications of fractional calculus in applied sciences. This motivates the researchers to develop numerical algorithms for solving fractional differential equations.
\par There are two predominant discretization techniques in time for fractional differential equations, namely Gr\"{u}nwald-Letnikov approximation \cite{dimitrov2013numerical} and L1 type approximation \cite{alikhanov2015new,lin2007finite}. The second category, viz. the L1 type approximation scheme is based on piecewise interpolation of the integrand in the definition \eqref{LS1} of the Caputo fractional derivative. Lin and Xu in  \cite{lin2007finite} developed the L1 scheme based on piecewise linear interpolation for Caputo fractional derivative and Legendre spectral method in space for the following time-fractional PDE in one space dimension
\begin{equation}\label{1.3}
\begin{aligned}
^{C}D^{\alpha}_{t}u-\frac{\partial^{2}u}{\partial x^{2}}&=f(x,t)\quad x \in (0,1),~ t \in (0,T],\\
u(x,0)&=g(x) \quad x \in (0,1),\\
u(0,t)=u(1,t)&=0 \quad 0 \leq t \leq T,
\end{aligned}
\end{equation} 
and achieved the convergence estimates of $O(h^{2}+k^{2-\alpha})$ for  solutions belonging to $C^{2}\left([0,T];H^{2}(\Omega)\cap H^{1}_{0}(\Omega)\right)$. Recently, Alikhanov  \cite{alikhanov2015new} proposed a modification of the L1 type scheme in the time direction and difference scheme in the space direction for some linear extension  the problem \eqref{1.3}. In his work, the  author proved that the convergence rate is of $O(h^{2}+k^{2})$ for solutions belonging to $C^{3}\left([0,T];C^{4}(\Omega)\right)$.
\par  On a similar note, there has been considerable attention devoted to the nonlocal diffusion problems where  diffusion coefficient depends on the entire domain rather than pointwise. Lions \cite{lions1978some} studied the following problem 
\begin{equation*}
    \frac{\partial^{2}u}{\partial t^{2}}-M\left(x,\int_{\Omega}|\nabla u|^{2}dx\right)\Delta u=f(x,t) \quad \text{in}\quad \Omega \times [0,T],
\end{equation*}
which models transversal oscillations of an elastic string or membrane by considering the change in length during vibrations. Also, the nonlocal terms appear in various physical and biological systems. For instance, the temperature in a thin region during friction welding \cite{kavallaris2007behaviour}, Kirchhoff equations with magnetic field \cite{mingqi2018critical}, Ohmic heating with variable thermal conductivity \cite{tzanetis2001nonlocal}, and many more. We cite \cite{correa2004existence, goel2019kirchhoff, mishra2016existence} for some contemporary works related to the  existence, uniqueness, and regularity of the problems involving Kirchhoff type diffusion coefficient.
\par The models discussed above behave accurately only for a perfectly homogeneous medium, but in real-life situations, a large number of heterogeneities are present, which cause some memory effect or feedback term. These phenomena cannot be described by classical PDEs, which motivate us to study  time-fractional PDEs for non-homogeneous materials. We note  that this class of equations has not been analyzed in the literature yet, and this is the first attempt to establish new results for the problem \eqref{6-10-22-1}.
 \par  The salient features of the considered problem  is its doubly nonlocal nature due to the presence of Kirchhoff term and fractional derivative or  memory term. The appearance of the Kirchhoff term creates major difficulties in the theoretical as well as in numerical analysis of the problem under consideration. We cannot apply Laplace/Fourier transformation in the problem \eqref{6-10-22-1}, therefore explicit representation of its solution in terms of Fourier expansion is not possible. To resemble this issue, we use Galerkin method to show the well-posedness of the weak formulation of the problem \eqref{6-10-22-1}.
 \par The fully discrete formulation of the considered problem produces a system of nonlinear algebraic equations. In general, numerical schemes based on the Newton method are adopted to solve this system \cite{gudi2012finite}. The Kirchhoff term leads to the highly non-sparse Jacobian of this system \cite{gudi2012finite}. As a result of which we require high computational cost as well as  huge computer storage for solving this system. We reduce these costs by developing a new linearization technique for the nonlinearity. 
 \par On the other hand, the memory term incorporates the history of the phenomena under investigation by virtue of  which we need to store  the value of approximate solution at all previous time steps. This process demands large computer memory. We overcome this difficulty by discretizing the memory term using modified trapezoidal or modified Simpson's rule \cite{pani1992numerical}.
 \par To prove the well-posedness of the weak formulation of the problem \eqref{6-10-22-1}, we reduce the weak formulation onto a finite dimensional subspace of $H^{1}_{0}(\Omega)$. The theory of fractional differential equations \cite{diethelm2002analysis} ensures the existence of Galerkin sequence of weak solutions. The a priori bounds on these Galerkin sequences are attained by employing the energy argument. We make use of these a priori bounds in  Aubin-Lions type compactness lemma \cite{li2016analysis} to  prove that the Galerkin sequence converges to the weak solution of the problem \eqref{6-10-22-1}.
 \par To determine the semi discrete error estimates, we alter the definition of standard Ritz-Volterra projection operator \cite{cannon1988non} so that it reduces the complications caused by  the Kirchhoff term. This modified Ritz-Volterra projection operator follows the best approximation properties same as that of standard one. These best approximation properties play a key role in deriving the semi discrete error estimates.  
 \par To obtain the numerical solution,  we construct two fully discrete formulations for the problem \eqref{6-10-22-1} by discretizing the space domain using a conforming FEM \cite{thomee2007galerkin} and the time direction by uniform mesh.  First, we develop a new linearized L1 Galerkin FEM. This method comprises of L1 type approximation \cite{lin2007finite} of the Caputo fractional derivative, linearization technique  for the Kirchhoff type nonlinearity, and modified Simpson's rule \cite{pani1992numerical} for approximation of the memory term. We acquire the a priori bounds on the solution of this numerical scheme and show that this numerical scheme is accurate of $O(h+k^{2-\alpha})$.
 \par Further, we increase the accuracy of this scheme in the time direction by  replacing the L1 scheme with the L2-1$_{\sigma}$ scheme \cite{alikhanov2015new} for the approximation of the Caputo fractional derivative. As a  consequence, we propose a new linearized L2-1$_\sigma$ Galerkin FEM which has a convergence rate of $O(h+k^{2})$. These numerical results are supported by conducting a numerical experiment in MATLAB software.
\par \textbf{Turning to the layout of this paper:} In Section \ref{12-5-4}, we provide  some notations, assumptions, and  preliminaries results that will be used throughout this work. In Section \ref{11-10-22-52}, we state  main contributions of this article. Section \ref{11-10-22-54} contains  the proof of well-posedness of the weak formulation of  the problem \eqref{6-10-22-1}. In Section \ref{11-10-22-55}, we define semi discrete formulation of the considered problem  and derive a priori bounds as well as  error estimates on semi discrete solutions. In Section \ref{11-10-22-56}, we develop  a new linearized  L1 Galerkin FEM. We  derive a priori bounds on numerical solutions of the developed numerical scheme and prove its accuracy rate of $O(h+k^{2-\alpha})$. In Section \ref{11-10-22-57}, we achieve improved convergence rate of $O(h+k^{2})$ by proposing a new linearized  L2-1$_\sigma$ Galerkin FEM. Section \ref{11-10-22-58} includes a numerical experiment that confirms the sharpness of theoretical results. Finally, we conclude this work in Section \ref{11-10-22-59}.

\section{Preliminaries}\label{12-5-4}
\noindent  Let $L^{1}(\Omega)$ be the set of all equivalence classes of the integrable functions on $\Omega$ with the norm 
\begin{equation}
    \|g\|_{L^{1}(\Omega)}=\int_{\Omega}|g(x)|~dx~~\text{for}~~g \in L^{1}(\Omega).
\end{equation}
Let $L^{2}(\Omega)$ be the set of all equivalence classes of the square integrable functions on $\Omega$ with the norm 
\begin{equation}\label{12-5-2}
    \|g\|^{2}=\int_{\Omega}|g(x)|^{2}~dx~~\text{for}~~g \in L^{2}(\Omega).
\end{equation}
The norm  defined in \eqref{12-5-2} is induced by the inner product $(\cdot,\cdot)$ as follows 
\begin{equation}
    (g,h)=\int_{\Omega}g(x)h(x)~dx~~\text{for}~~g,h \in L^{2}(\Omega).
\end{equation}
The Sobolev space $W^{1,1}(\Omega)$ is the collection of all functions in $L^{1}(\Omega)$ such that its distributional derivative of order one is also in $L^{1}(\Omega)$, i.e.,
\begin{equation}
    W^{1,1}(\Omega)=\left\{ g \in L^{1}(\Omega) ;~  D g \in L^{1}(\Omega)\right\}.
\end{equation}
The norm on the space $W^{1,1}(\Omega)$ is given by 
\begin{equation}
    \|g\|_{W^{1,1}(\Omega)}=\|g\|_{L^{1}(\Omega)}+\|D g\|_{L^{1}(\Omega)}~~\text{for}~~g \in W^{1,1}(\Omega).
\end{equation}
The sobolev space $H^{m}(\Omega),\left(m \in \{1,2\}\right)$ is  the set of all functions in $L^{2}(\Omega)$ such that its distributional derivatives upto order $m$ are also in $L^{2}(\Omega)$, i.e.,
\begin{equation}
H^{m}(\Omega)=\left\{g \in L^{2}(\Omega) ;~ D^{\beta}g \in L^{2}(\Omega),~ |\beta|\leq m \right\},
\end{equation}
where $\beta$ is multiindex. The norm on the space $H^{m}(\Omega)$  is induced by the following inner product $(\cdot,\cdot)_{m}$ as follows 
\begin{equation}
    (g,h)_{m}=\sum_{|\beta|\leq m}(D^{\beta}g,D^{\beta}h)~~\text{for}~~g,h \in H^{m}(\Omega).
\end{equation}
We denote $H^{m}_{0}(\Omega),\left(m \in \{1,2\}\right)$ be the closure of $C^{\infty}_{C}(\Omega)$ in $H^{m}(\Omega)$. The space $H^{m}_{0}(\Omega)$ can be characterised by the functions in $H^{m}(\Omega)$ having zero trace on the boundary $\partial \Omega$ \cite[Section 2.7]{kesavan1989topics}. The dual space  of the $H^{m}_{0}(\Omega)$ is denoted by $H^{-m}(\Omega)$.
\par For any  Hilbert space $X$, we denote $L^{2}(0,T;X)$ be the set of all measurable functions $g:[0,T]\rightarrow X$ such that
\begin{equation}
 \int_{0}^{T}\|g(s)\|^{2}_{X}~ds < \infty.   
\end{equation} 
The norm on the space $L^{2}(0,T;X)$  is given by 
\begin{equation}
    \|g\|^{2}_{L^{2}(0,T;X)}=\int_{0}^{T}\|g(s)\|_{X}^{2}~ds~~\text{for}~~g \in L^{2}(0,T;X).
\end{equation}
We also define a weighted $L^{2}_{\alpha}(0,T;X)$  space consisting of all measurable functions $g:[0,T]\rightarrow X$ such that
\begin{equation}
    \sup_{t \in (0,T)}\left(\frac{1}{\Gamma(\alpha)}\int_{0}^{t}(t-s)^{\alpha-1}\|g(s)\|^{2}_{X}~ds\right) < \infty.
\end{equation}
The norm on the space $L^{2}_{\alpha}(0,T;X)$  is given by  \cite[(4.5)]{li2018some}
\begin{equation}
    \|g\|^{2}_{L^{2}_{\alpha}(0,T;X)}=\sup_{t \in (0,T)}\left(\frac{1}{\Gamma(\alpha)}\int_{0}^{t}(t-s)^{\alpha-1}\|g(s)\|^{2}_{X}~ds\right)~~\text{for}~~g \in L^{2}_{\alpha}(0,T;X).
\end{equation}
One can observe that $L^{2}_{\alpha}(0,T;X)  \subset  L^{2}(0,T;X)$. The set of all measurable functions $g:[0,T]\rightarrow X$ such that
\begin{equation}
  \text{ess}\sup_{t\in (0,T) }\|g(t)\|_{X} < \infty  
\end{equation}
is denoted by $L^{\infty}(0,T;X)$. The norm on this space is given by 
\begin{equation}
    \|g\|_{L^{\infty}(0,T;X)}=\text{ess}\sup_{t\in (0,T) }\|g(t)\|_{X}~~\text{for}~~g \in L^{\infty}(0,T;X).
\end{equation}
\noindent For any two quantities $a$ and $b$, the notation $a\lesssim b$ means that there exists a generic positive constant $C$ such that $a\leq Cb$, where $C$ depends on data but independent of discretization parameters and may vary at different occurrences.
\par Throughout the paper, we assume the following hypotheses on data:\\
\noindent \text{(H1)}\quad  Initial data $u_{0}\in H^{2}(\Omega)\cap H^{1}_{0}(\Omega)$ and source term  $f \in L^{\infty}\left(0,T;L^{2}(\Omega)\right)$.\\
\noindent \text{(H2)}\quad Diffusion coefficient $M: \bar{\Omega} \times [0,T]\times (0,\infty)\rightarrow (0,\infty)$ is a Lipschitz continuous function such that  there exists a constant $m_{0}$ which satisfies 
\begin{equation*}\label{1.1}
M(x,t,s)\geq m_{0} >0 ~\text{for all }~ (x,t,s) \in \bar{\Omega} \times [0,T]\times (0,\infty) ~\text{and} \left(m_{0}-4L_{M}K^{2}\right)>0,
\end{equation*}
where $K=\left(\|\nabla u_{0}\|+\|f\|_{L^{\infty}\left(0,T;L^{2}(\Omega)\right)}\right)$ and $L_{M}$ is a Lipschitz constant.\\
\noindent \text{(H3)}\quad  Memory operator $b(x,t,s)$ is a second order partial differential operator of the form  
\begin{equation*}
b(x,t,s)u(s):=-\nabla\cdot(b_{2}(x,t,s)\nabla u(s))+\nabla \cdot(b_{1}(x,t,s)u(s))+b_{0}(x,t,s)u(s),
\end{equation*}
with $b_{2}:\bar{\Omega}\times[0,T]\times [0,T] \rightarrow \mathbb{R}^{d\times d}$ is a symmetric and positive definite  matrix with entries $[b_{2}^{ij}(x,t,s)]$, $b_{1}:\bar{\Omega} \times[0,T]\times [0,T] \rightarrow \mathbb{R}^{d}$ is a vector with entries $[b_{1}^{j}(x,t,s)]$ and  $b_{0}: \bar{\Omega}\times [0,T]\times [0,T] \rightarrow \mathbb{R}$ is a scalar function. We assume that $b_{2}^{ij}, b_{1}^{j}, b_{0}$  are smooth functions  in all variables $(x,t,s)\in \bar{\Omega}\times [0,T]\times [0,T]$ for $i,j=1,2,\dots,d$.
\par We define a function $B(t,s,u(s),v)$ for all $t,s$ in $[0,T]$ and for all $u(s),v$ in $H^{1}_{0}(\Omega)$ as follows 
\begin{equation}
    B(t,s,u(s),v):=(b_{2}(x,t,s)\nabla u(s),\nabla v)+(\nabla\cdot(b_{1}(x,t,s)u(s)),v)+(b_{0}(x,t,s)u(s),v).
\end{equation}
Using (H3) and Poincar\'{e} inequality we can prove that there exists a positive constant $B_{0}$ such that for all $t,s$ in $[0,T]$ and for all $u,v$ in $H^{1}_{0}(\Omega)$ we have 
\begin{equation}\label{1.1a}
    |B(t,s,u(s),v)|\leq B_{0}\|\nabla u(s)\|~\|\nabla v\|.
\end{equation}
We denote 
\begin{equation}\label{2.1}
k(t):=\frac{t^{-\alpha}}{\Gamma(1-\alpha)},
\end{equation}
and $\ast$ indicates the convolution of two  integrable functions $g$ and $h$ on $[0,T]$ as 
\begin{equation}\label{354}
    (g\ast h)(t)=\int_{0}^{t}g(t-s)h(s)~ds \quad \text{for all}~~t~~\text{in}~~[0,T].
\end{equation}
\begin{rmk}\label{rmk1}
	Note that $l(t)$ defined by  $l(t):=\frac{t^{\alpha-1}}{\Gamma(\alpha)}$ satisfies $k\ast l=1$.
\end{rmk}
\begin{lem}\cite[Lemma 18.4.1]{gripenberg1990volterra} \label{12-5-5} Let $H$ be a real Hilbert space and $T>0$. Then for any $\tilde{k} \in W^{1,1}(0,T)$ and  $v \in L^{2}(0,T;H)$ we have
\begin{equation}
\begin{aligned}
    \left(\frac{d}{dt}\left(\tilde{k}\ast v\right)(t),v(t)\right)_{H}&=\frac{1}{2}\frac{d}{dt}\left(\tilde{k}\ast \|v\|^{2}_{H}\right)(t)+\frac{1}{2}\tilde{k}(t)\|v\|_{H}^{2}\\
    &+\frac{1}{2}\int_{0}^{t}\left[-\tilde{k}'(s)\right]\|v(t)-v(t-s)\|_{H}^{2}~ds~~\text{a.e.}~~t \in (0,T).
    \end{aligned}
\end{equation}
\end{lem}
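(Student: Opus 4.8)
The plan is to verify the identity by expanding both sides into a common form through a direct computation. The one preliminary fact I need is the rule for differentiating a convolution when only the kernel carries regularity. Since $\tilde{k}\in W^{1,1}(0,T)$ is absolutely continuous, its pointwise value $\tilde{k}(0)$ is well defined, and because $v\in L^{2}(0,T;H)\subset L^{1}(0,T;H)$ the convolution $\tilde{k}\ast v$ is absolutely continuous on $[0,T]$ with
\begin{equation*}
\frac{d}{dt}\left(\tilde{k}\ast v\right)(t)=\tilde{k}(0)\,v(t)+\left(\tilde{k}'\ast v\right)(t)\quad\text{a.e.}~t\in(0,T).
\end{equation*}
First I would establish (or invoke) this formula; the whole point is that the time derivative is shifted entirely onto $\tilde{k}$, since $v$ carries no time regularity, and Young's convolution inequality guarantees that $\tilde{k}'\ast v$ is finite for a.e.\ $t$. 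The same rule applies verbatim to the scalar convolution of $\tilde{k}$ with $\|v\|_{H}^{2}\in L^{1}(0,T)$.

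Taking the $H$-inner product of the displayed derivative with $v(t)$ turns the left-hand side of the lemma into
\begin{equation*}
\tilde{k}(0)\,\|v(t)\|_{H}^{2}+\int_{0}^{t}\tilde{k}'(t-s)\,(v(s),v(t))_{H}~ds.
\end{equation*}
For the right-hand side, the scalar differentiation rule gives $\frac{1}{2}\tilde{k}(0)\|v(t)\|_{H}^{2}+\frac{1}{2}\int_{0}^{t}\tilde{k}'(t-s)\|v(s)\|_{H}^{2}~ds$ for the first term, while the last term I would expand using
\begin{equation*}
\|v(t)-v(t-s)\|_{H}^{2}=\|v(t)\|_{H}^{2}-2(v(t),v(t-s))_{H}+\|v(t-s)\|_{H}^{2}
\end{equation*}
and split into three integrals.

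The bookkeeping step is to align every integral by the substitution $s\mapsto t-s$ in the two pieces involving $v(t-s)$, and to use $\int_{0}^{t}\tilde{k}'(s)~ds=\tilde{k}(t)-\tilde{k}(0)$ in the piece that is constant in $s$. After this, the two integrals carrying $\|v(s)\|_{H}^{2}$ cancel against each other, the scalar pieces collapse to $\tilde{k}(0)\|v(t)\|_{H}^{2}$ with the two occurrences of $\frac{1}{2}\tilde{k}(t)\|v(t)\|_{H}^{2}$ cancelling each other, and the surviving cross term reassembles as $\int_{0}^{t}\tilde{k}'(t-s)(v(s),v(t))_{H}~ds$. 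This matches exactly the left-hand side computed above, which closes the proof.

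I expect the only real obstacle to be the rigorous justification of the convolution differentiation formula in the Bochner-space setting: one must confirm that $\tilde{k}\ast v$ is genuinely absolutely continuous, so that the identity holds as an equality of $L^{1}$ functions and the evaluation ``$\tilde{k}(0)v(t)$'' is legitimate rather than a merely formal differentiation under the integral sign. Once this regularity is secured, the change of variables and the cancellations are purely mechanical, and it is the symmetry of the real inner product that makes the cross terms combine without any leftover.
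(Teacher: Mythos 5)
Your proposal cannot be compared with a proof in the paper, because the paper does not prove this lemma at all: it is imported as a black box from \cite[Lemma 18.4.1]{gripenberg1990volterra}. Taken on its own terms, your direct verification is correct. I checked the bookkeeping: with $\frac{d}{dt}(\tilde{k}\ast v)(t)=\tilde{k}(0)v(t)+(\tilde{k}'\ast v)(t)$, the expansion of $\|v(t)-v(t-s)\|_{H}^{2}$, the substitution $s\mapsto t-s$, and $\int_{0}^{t}\tilde{k}'(s)\,ds=\tilde{k}(t)-\tilde{k}(0)$, the coefficient of $\|v(t)\|_{H}^{2}$ on the right-hand side collapses to $\tfrac{1}{2}\tilde{k}(0)+\tfrac{1}{2}\tilde{k}(t)-\tfrac{1}{2}\bigl(\tilde{k}(t)-\tilde{k}(0)\bigr)=\tilde{k}(0)$, the two integrals carrying $\tilde{k}'(t-s)\|v(s)\|_{H}^{2}$ cancel, and the surviving cross term equals the one produced on the left, exactly as you claim. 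The single obstacle you flag, rigor of the differentiation rule in the Bochner setting, is genuine but closes in two lines: setting $g(t):=\tilde{k}(0)v(t)+(\tilde{k}'\ast v)(t)$, which lies in $L^{2}(0,T;H)$ by Young's inequality, Fubini's theorem gives
\begin{equation*}
\int_{0}^{\tau}g(t)\,dt=\tilde{k}(0)\int_{0}^{\tau}v(s)\,ds+\int_{0}^{\tau}\Bigl(\int_{s}^{\tau}\tilde{k}'(t-s)\,dt\Bigr)v(s)\,ds=(\tilde{k}\ast v)(\tau),
\end{equation*}
so $\tilde{k}\ast v$ is the indefinite Bochner integral of $g$, hence absolutely continuous with $(\tilde{k}\ast v)'=g$ a.e.; the identical argument handles the scalar convolution with $\|v\|_{H}^{2}\in L^{1}(0,T)$. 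One further a.e.\ point worth recording: splitting the squared-difference integral into three separate integrals is legitimate for a.e.\ $t$ because $|\tilde{k}'|\ast\|v\|_{H}^{2}$ is finite a.e.\ (Young again) and $\int_{0}^{t}|\tilde{k}'(s)|\,ds\,\|v(t)\|_{H}^{2}$ is finite whenever $\|v(t)\|_{H}<\infty$. With these two remarks your sketch upgrades to a complete, self-contained proof of the cited result, which is more than the paper itself provides.
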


\begin{lem}\label{lem2.5}  \cite[Theorem 8]{almeida2017gronwall}
	Let $u,v$ be two nonnegative integrable functions on $[a,b]$ and $g$ a continuous function in $[a,b]$. Assume that  $v$ is nondecreasing in $[a,b]$ and $g$ is nonnegative and  nondecreasing in $[a,b]$. If
	\begin{equation*}
	u(t)\leq v(t)+g(t)\int_{a}^{t}(t-s)^{\alpha-1}u(s)~ds~~\text{for}~\alpha \in (0,1)~\text{and}~\forall~t \in  [a,b],
	\end{equation*}
	then 
	\begin{equation*}
	u(t)\leq v(t)E_{\alpha}\left[g(t)\Gamma(\alpha)(t-a)^{\alpha}\right]~~\text{for}~\alpha \in (0,1)~\text{and}~\forall~t \in  [a,b],
	\end{equation*}
	where $E_{\alpha}(\cdot)$ is the one parameter Mittag-Leffler function \cite[Section 1.2]{podlubny1998fractional}.
\end{lem}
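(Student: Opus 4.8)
The plan is to establish this generalized (fractional) Gronwall inequality by a Picard-type iteration of the integral inequality, followed by summation of the resulting series into a Mittag-Leffler function. First I would introduce the linear integral operator
\begin{equation*}
(\mathcal{A}\phi)(t):=g(t)\int_{a}^{t}(t-s)^{\alpha-1}\phi(s)~ds,
\end{equation*}
so that the hypothesis reads $u\leq v+\mathcal{A}u$ pointwise on $[a,b]$. Since $\mathcal{A}$ maps nonnegative functions to nonnegative functions, it is monotone, and iterating this inequality $N$ times yields
\begin{equation*}
u(t)\leq \sum_{n=0}^{N-1}(\mathcal{A}^{n}v)(t)+(\mathcal{A}^{N}u)(t),\qquad t\in[a,b].
\end{equation*}
The proof then reduces to three tasks: (i) finding a tractable closed-form bound for the iterated terms $\mathcal{A}^{n}v$, (ii) showing the remainder $\mathcal{A}^{N}u$ vanishes as $N\to\infty$, and (iii) recognising the limiting series.

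The central step, which I expect to be the main obstacle, is the inductive estimate
\begin{equation*}
(\mathcal{A}^{n}v)(t)\leq \frac{\left(g(t)\Gamma(\alpha)\right)^{n}}{\Gamma(n\alpha)}\int_{a}^{t}(t-s)^{n\alpha-1}v(s)~ds.
\end{equation*}
In the inductive step I would exploit the monotonicity of $g$ to replace $g(\tau)$ by $g(t)$ for $\tau\leq t$, apply Fubini's theorem to interchange the order of integration, and evaluate the inner integral $\int_{s}^{t}(t-\tau)^{\alpha-1}(\tau-s)^{n\alpha-1}~d\tau$ through the substitution $\tau=s+(t-s)\xi$, which converts it into a Beta integral equal to $\frac{\Gamma(\alpha)\Gamma(n\alpha)}{\Gamma((n+1)\alpha)}(t-s)^{(n+1)\alpha-1}$. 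The cancellation of the factor $\Gamma(n\alpha)$ is precisely what propagates the asserted form from index $n$ to $n+1$.

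For the remainder, I would use that $g$ is continuous on the compact interval $[a,b]$, hence bounded, and that $u$ is integrable. Applying the same estimate to $u$ gives, for $N\alpha\geq 1$,
\begin{equation*}
(\mathcal{A}^{N}u)(t)\leq \frac{\left(g(t)\Gamma(\alpha)\right)^{N}}{\Gamma(N\alpha)}(b-a)^{N\alpha-1}\|u\|_{L^{1}(a,b)},
\end{equation*}
and the factorial-type growth of $\Gamma(N\alpha)$ forces this to $0$ as $N\to\infty$. Finally, using that $v$ is nondecreasing I would bound $v(s)\leq v(t)$ inside each iterated term, compute $\int_{a}^{t}(t-s)^{n\alpha-1}~ds=(t-a)^{n\alpha}/(n\alpha)$, and invoke $n\alpha\,\Gamma(n\alpha)=\Gamma(n\alpha+1)$ to collapse the sum into
\begin{equation*}
u(t)\leq v(t)\sum_{n=0}^{\infty}\frac{\left(g(t)\Gamma(\alpha)(t-a)^{\alpha}\right)^{n}}{\Gamma(n\alpha+1)}=v(t)E_{\alpha}\left(g(t)\Gamma(\alpha)(t-a)^{\alpha}\right),
\end{equation*}
which is the claimed bound. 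Beyond this bookkeeping, the only points needing care are justifying the interchange of summation and integration (via dominated convergence, relying on the uniform convergence of the Mittag-Leffler series on bounded sets) and the weakly singular kernel near $s=t$, which stays integrable because $n\alpha-1>-1$.
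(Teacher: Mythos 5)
Your proof is correct: the induction on $(\mathcal{A}^{n}v)(t)$ via the Beta-integral identity, the vanishing of the remainder $(\mathcal{A}^{N}u)(t)$ using Stirling-type growth of $\Gamma(N\alpha)$, and the final collapse of the series into $E_{\alpha}$ using $v(s)\leq v(t)$ and $n\alpha\,\Gamma(n\alpha)=\Gamma(n\alpha+1)$ all go through as written. Note that the paper itself gives no proof of this lemma — it is quoted directly from \cite[Theorem 8]{almeida2017gronwall} — and your argument is essentially the standard Picard-iteration proof used in that reference, so there is nothing to reconcile; the only superfluous point is your worry about interchanging summation and integration, which never arises in your scheme since you work with a finite sum plus a remainder rather than an infinite series under an integral.
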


\begin{lem}\label{13-5-1}\cite{diethelm2002analysis} Consider the following initial value problem 
\begin{equation}\label{13-5-2}
\begin{aligned}
    \partial^{\alpha}_{t}y(t)&=g(t,y(t)),~ t \in (0,T],~\alpha \in (0,1),\\
    y(0)&=y_{0}.
    \end{aligned}
\end{equation}
Let $y_{0}\in \mathbb{R}, K^{\ast}>0, t^{\ast}>0$. Define $D=\left\{(t,y(t));~t\in [0,t^{\ast}],~|y-y_{0}|\leq K^{\ast}\right\}$. Let function $g:D\rightarrow \mathbb{R}$ be a continuous. Define $M^{\ast}=\sup_{(t,y(t))\in D}|g(t,y(t)|.$  Then there exists a continuous function $y\in C[0,T^{\ast}]$ which solves the problem \eqref{13-5-2}, where 
\begin{equation}
T^{\ast}=\begin{cases} t^{\ast};&~~~~M^{\ast}=0,\\
\min\{t^{\ast}, \left(\frac{K^{\ast}\Gamma(1+\alpha)}{M^{\ast}}\right)^{\frac{1}{\alpha}}\};&~~~~\text{else}.
\end{cases}
\end{equation}
\end{lem}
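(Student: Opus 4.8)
The plan is to recast the fractional initial value problem \eqref{13-5-2} as an equivalent Volterra integral equation and then apply a compactness-based fixed point argument. Since $g$ is assumed merely continuous (and not Lipschitz), the contraction mapping principle is unavailable, so I would aim instead at Schauder's fixed point theorem. Recalling from \eqref{1.2} that $\partial^{\alpha}_{t}y={}^{C}D^{\alpha}_{t}y=k\ast y'$ for absolutely continuous $y$, and that by Remark~\ref{rmk1} we have $k\ast l=1$ with $l(t)=t^{\alpha-1}/\Gamma(\alpha)$, convolving \eqref{13-5-2} with $l$ yields $l\ast\partial^{\alpha}_{t}y=(l\ast k)\ast y'=1\ast y'=y-y_{0}$. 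Hence \eqref{13-5-2} is equivalent to the integral equation
\begin{equation*}
y(t)=y_{0}+\frac{1}{\Gamma(\alpha)}\int_{0}^{t}(t-s)^{\alpha-1}g(s,y(s))~ds,
\end{equation*}
and a continuous solution of this equation on $[0,T^{\ast}]$ is exactly a solution of \eqref{13-5-2}.

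Next I would define the operator $A$ on $C[0,T^{\ast}]$ by $(Ay)(t)=y_{0}+\bigl(l\ast g(\cdot,y(\cdot))\bigr)(t)$ and look for a fixed point in the closed, bounded, convex set $U=\{y\in C[0,T^{\ast}]:\|y-y_{0}\|_{\infty}\le K^{\ast}\}$, where $\|\cdot\|_{\infty}$ denotes the supremum norm. For $y\in U$ one has $(s,y(s))\in D$ for every $s$, hence $|g(s,y(s))|\le M^{\ast}$, and evaluating the kernel integral gives
\begin{equation*}
|(Ay)(t)-y_{0}|\le \frac{M^{\ast}}{\Gamma(\alpha)}\int_{0}^{t}(t-s)^{\alpha-1}~ds=\frac{M^{\ast}t^{\alpha}}{\Gamma(1+\alpha)}\le \frac{M^{\ast}(T^{\ast})^{\alpha}}{\Gamma(1+\alpha)}.
\end{equation*}
The choice $T^{\ast}=\min\{t^{\ast},(K^{\ast}\Gamma(1+\alpha)/M^{\ast})^{1/\alpha}\}$ is precisely what forces the right-hand side to be at most $K^{\ast}$, so that $A:U\to U$; in the degenerate case $M^{\ast}=0$ we have $g\equiv 0$ on $D$ and $y\equiv y_{0}$ solves the problem on all of $[0,t^{\ast}]$.

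It then remains to verify the remaining hypotheses of Schauder's theorem for $A$ on $U$. Continuity of $A$ in the supremum norm follows from the continuity of $g$ on the compact set $D$ together with the dominated convergence theorem, the integrable kernel $(t-s)^{\alpha-1}$ serving as the dominating majorant. For relative compactness of $A(U)$ I would invoke the Arzel\`{a}--Ascoli theorem: uniform boundedness is immediate from the bound above, while equicontinuity follows by splitting, for $t_{1}<t_{2}$, the integral defining $(Ay)(t_{2})-(Ay)(t_{1})$ into the portion over $[0,t_{1}]$ and the portion over $[t_{1},t_{2}]$ and using $|g|\le M^{\ast}$ together with the subadditivity of $s\mapsto s^{\alpha}$, which produces a modulus of continuity bounded by $\tfrac{2M^{\ast}}{\Gamma(1+\alpha)}(t_{2}-t_{1})^{\alpha}$ independently of $y\in U$. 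Schauder's theorem then yields a fixed point $y\in U$, hence a continuous solution of \eqref{13-5-2}. I expect the main technical obstacle to lie in this equicontinuity estimate, because the weak singularity of the kernel at $s=t$ makes the comparison of the two kernels $(t_{2}-s)^{\alpha-1}$ and $(t_{1}-s)^{\alpha-1}$ delicate; the integrability of the singularity, guaranteed by $\alpha\in(0,1)$, is exactly what makes the uniform estimate go through.
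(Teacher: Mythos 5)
Your proposal is correct and takes essentially the same route as the proof in \cite{diethelm2002analysis}, which the paper cites for this lemma without reproducing the argument: reduction to the weakly singular Volterra integral equation $y(t)=y_{0}+\frac{1}{\Gamma(\alpha)}\int_{0}^{t}(t-s)^{\alpha-1}g(s,y(s))\,ds$, the self-mapping bound $M^{\ast}(T^{\ast})^{\alpha}/\Gamma(1+\alpha)\leq K^{\ast}$ that explains the formula for $T^{\ast}$ (with the degenerate case $M^{\ast}=0$ handled separately), and Schauder's fixed point theorem with compactness supplied by the Arzel\`{a}--Ascoli equicontinuity estimate of order $(t_{2}-t_{1})^{\alpha}$. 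There is no gap worth noting.
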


\begin{lem}\label{13-5-3} \cite[Lemma 4.1]{li2018some}
For $T>0$ and $\alpha \in (0,1)$. Let $X,Y,$ and $Z$ be the Banach spaces such that $X$ is compactly embedded in $Y$ and $Y$ is continuously embedded in $Z$. Suppose that $W\subset L^{1}_{\text{loc}}(0,T;X)$ satisfies the following 
\begin{enumerate}
    \item There exist a constant $C_{1}>0$ such that for all $u\in W$
    \begin{equation}
       \sup_{t \in (0,T)}\left(\frac{1}{\Gamma(\alpha)}\int_{0}^{t}(t-s)^{\alpha-1}\|u(s)\|^{2}_{X}~ds\right) \leq  C_{1}. 
    \end{equation}
    \item There exists a constant $C_{2}>0$ such that for all $u\in W$
    \begin{equation}
        \|\partial^{\alpha}_{t}u\|_{L^{2}(0,T;Z)}\leq C_{2}.
    \end{equation}
\end{enumerate}
Then $W$ is relatively compact in $L^{2}(0,T;Y)$.
\end{lem}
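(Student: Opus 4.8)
The plan is to deduce this from the Aubin--Lions--Simon compactness theorem for vector-valued spaces, which characterizes relative compactness of a set $W\subset L^2(0,T;Y)$ by two conditions: (i) local compactness of the time-averages, i.e. $\{\int_{t_1}^{t_2}u(t)\,dt : u\in W\}$ is relatively compact in $Y$ for every $0<t_1<t_2<T$, and (ii) uniform equicontinuity of time translations, $\|u(\cdot+h)-u(\cdot)\|_{L^2(0,T-h;Y)}\to 0$ as $h\to 0^+$, uniformly over $u\in W$. The two structural tools I would use are the Ehrling interpolation inequality arising from the compact embedding $X\hookrightarrow\hookrightarrow Y$, namely that for every $\varepsilon>0$ there is a constant $C_\varepsilon$ with $\|v\|_Y\le \varepsilon\|v\|_X+C_\varepsilon\|v\|_Z$ for all $v\in X$, together with the reconstruction identity $u(t)-u(0)=(l\ast\partial_t^\alpha u)(t)$, where $l(t)=t^{\alpha-1}/\Gamma(\alpha)$; this is exactly the statement that convolution with $l$ inverts $\partial_t^\alpha$, consistent with the relation $k\ast l=1$ recorded in Remark \ref{rmk1}.

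First I would extract from Assumption 1 a genuine $L^2$-in-time bound. Since $W$ is bounded in $L^2_{\alpha}(0,T;X)$ and this space embeds continuously into $L^2(0,T;X)$ (evaluating the weighted supremum at $t=T$ and writing $\|u(s)\|_X^2=(T-s)^{1-\alpha}(T-s)^{\alpha-1}\|u(s)\|_X^2$ yields $\|u\|_{L^2(0,T;X)}^2\le T^{1-\alpha}\Gamma(\alpha)\,C_1$), the family $W$ is bounded in $L^2(0,T;X)$. Hypothesis (i) is then immediate: for fixed $0<t_1<t_2<T$ one estimates $\big\|\int_{t_1}^{t_2}u\,dt\big\|_X\le (t_2-t_1)^{1/2}\|u\|_{L^2(t_1,t_2;X)}$, so the averages lie in a fixed bounded subset of $X$, which is precompact in $Y$ by the compact embedding.

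For hypothesis (ii) I would apply Ehrling pointwise in $t\in(0,T-h)$,
\begin{equation*}
\|u(t+h)-u(t)\|_Y\le \varepsilon\,\|u(t+h)-u(t)\|_X+C_\varepsilon\,\|u(t+h)-u(t)\|_Z,
\end{equation*}
then square and integrate. The $X$-contribution is bounded by a constant multiple of $\varepsilon^2\|u\|_{L^2(0,T;X)}^2$, hence uniformly small once $\varepsilon$ is fixed small. The $Z$-contribution is where Assumption 2 enters: setting $g:=\partial_t^\alpha u$ and using $u(t+h)-u(t)=(l\ast g)(t+h)-(l\ast g)(t)$, the constant $u(0)$ cancelling in the difference, I would extend $g$ by zero outside $(0,T)$ and read this difference as the convolution of $g$ against the shifted kernel, so that Young's inequality gives $\|u(\cdot+h)-u(\cdot)\|_{L^2(0,T-h;Z)}\le \|\tau_h l-l\|_{L^1}\,\|g\|_{L^2(0,T;Z)}\le C_2\,\|\tau_h l-l\|_{L^1}$. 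Because $l\in L^1(0,T)$ (integrability of $t^{\alpha-1}$ for $\alpha>0$), continuity of translation in $L^1$ forces $\|\tau_h l-l\|_{L^1}\to 0$, and this bound is uniform over $W$. Fixing $\varepsilon$ to control the $X$-term and then letting $h\to0^+$ establishes (ii), and Simon's theorem yields relative compactness of $W$ in $L^2(0,T;Y)$.

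The step I expect to be the main obstacle is making the reconstruction formula $u-u(0)=l\ast\partial_t^\alpha u$ rigorous for the low-regularity class at hand, with $u\in L^1_{\mathrm{loc}}(0,T;X)$ and only $\partial_t^\alpha u\in L^2(0,T;Z)$: one must justify that $\partial_t^\alpha$ is inverted by convolution with $l$ and that a suitable weak initial value $u(0)$ is well defined, so that the cancellation above is legitimate. A secondary technical point is that $l$ is not globally integrable on $(0,\infty)$, so the Young estimate should be run with $l$ truncated to $(0,T)$; since only arguments $t+h-s\in(0,T)$ occur in the causal convolution on the relevant range, this truncation leaves the difference unchanged and restores an honest $L^1$ kernel. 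Once the reconstruction identity and the accompanying Young bound are secured, the remaining argument is the standard Ehrling-plus-translation machinery carried out above.
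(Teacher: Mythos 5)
You should first note that the paper contains no proof of this lemma at all: it is imported verbatim from \cite[Lemma 4.1]{li2018some}, so there is no internal argument to compare against, and your proposal has to be judged against the proof in that reference. Judged that way, your argument is essentially the same proof and is sound: Li and Liu also convert hypothesis 1 into an $L^{2}(0,T;X)$ bound, obtain the uniform translation estimate from the representation $u-u(0)=l\ast\partial^{\alpha}_{t}u$ plus Young's inequality for the truncated kernel, and then invoke Simon's compactness theorem; the only cosmetic difference is that you verify Simon's two-condition criterion by hand with an explicit Ehrling interpolation, whereas the reference uses the packaged form of Simon's theorem (boundedness in $L^{p}(0,T;X)$ together with translations small in the weakest space), in which the Ehrling step is already built in. Your flagged obstacle is the right one to flag, and it is worth making precise: if $\partial^{\alpha}_{t}u$ is read naively as the distributional derivative of $k\ast(u-u(0))$ on $(0,T)$, then hypothesis 2 only yields $k\ast(u-u(0))=c+1\ast g$ for some constant $c\in Z$, hence
\begin{equation*}
u-u(0)=c\,\frac{t^{\alpha-1}}{\Gamma(\alpha)}+(l\ast g)(t),
\end{equation*}
and the spurious term cannot be removed by integrability alone when $\alpha>1/2$ (for $\alpha\le 1/2$ it can, since $t^{\alpha-1}\notin L^{2}(0,T)$ while $u$, $u(0)$, and $l\ast g$ all lie in $L^{2}(0,T;Z)$, forcing $c=0$). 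This is exactly why \cite{li2018some} works with a generalized Caputo derivative defined so that the inversion $u=u(0)+l\ast\partial^{\alpha}_{t}u$ holds by construction; equivalently, the vanishing trace of $k\ast(u-u(0))$ at $t=0$ is part of the meaning of hypothesis 2. That convention is also what the present paper uses when it applies the lemma: the Galerkin solutions $u_{m}$ in the proof of Theorem \ref{t2.5} are constructed precisely with $k\ast(u_{m}-u_{m}(0))$ having vanishing trace at $t=0$. So your proof is complete once you either adopt that definition or add the trace condition explicitly; no further analytic work is missing.
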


\begin{lem} \cite{zacher2009weak} \label{L3.1}Let $k$ be the kernel defined in \eqref{2.1} then there exists a sequence of kernels $k_{n}$ in $W^{1,1}(0,T)$ such that $k_{n}$ is nonnegative and nonincreasing in $(0,\infty)$. Also 
\begin{equation}
    k_{n} \rightarrow k ~\text{in} ~L^{1}(0,T)~ \text{as}~ n \rightarrow \infty,
\end{equation}
and 
\begin{equation}
    \frac{d}{dt}\left(k_{n}\ast u\right)\rightarrow \frac{d}{dt}(k\ast u)~\text{in} ~L^{2}(0,T;L^{2}(\Omega))~ \text{as}~ n \rightarrow \infty.
\end{equation}
\end{lem}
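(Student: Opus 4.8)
The only reason the kernel $k(t)=t^{-\alpha}/\Gamma(1-\alpha)$ from \eqref{2.1} fails to lie in $W^{1,1}(0,T)$ is its singularity at the origin: $k$ itself is integrable, but $k'(t)\sim -\alpha\,t^{-\alpha-1}/\Gamma(1-\alpha)$ is not. The plan is therefore to construct smooth kernels $k_n$ that cap this singularity at a finite height growing with $n$, while retaining positivity, monotonicity, and, crucially, the correct convolution action. I would do this by a resolvent (Yosida) regularization built on the companion kernel $l(t)=t^{\alpha-1}/\Gamma(\alpha)$ of Remark \ref{rmk1}, which satisfies $k\ast l=1$. For each $n$, let $s_n$ be the unique solution of the scalar Volterra equation $s_n+n\,(l\ast s_n)=1$ on $[0,T]$ and set $k_n:=n\,s_n$. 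Laplace transforming (with $\widehat{l}(\lambda)=\lambda^{-\alpha}$) gives $\widehat{s_n}(\lambda)=\lambda^{\alpha-1}/(\lambda^\alpha+n)$, i.e. $s_n(t)=E_\alpha(-n t^\alpha)$ and $k_n(t)=n\,E_\alpha(-n t^\alpha)$; letting $\lambda\to\infty$ shows $k_n(0^+)=n<\infty$, while for each fixed $t$ the Mittag-Leffler asymptotics give $k_n(t)\to t^{-\alpha}/\Gamma(1-\alpha)=k(t)$.

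The structural properties come first and are comparatively routine. Since $\alpha\in(0,1)$, the kernel $l$ is completely monotone, hence so is the associated relaxation function $s_n$; consequently $s_n$, and with it $k_n=n\,s_n$, is nonnegative and nonincreasing on $(0,\infty)$. Being nonincreasing and bounded between $k_n(T)>0$ and $k_n(0^+)=n$, the function $k_n$ has bounded variation, and as it is smooth on $(0,\infty)$ we get $\int_0^T|k_n'(t)|\,dt=k_n(0^+)-k_n(T)<\infty$, so $k_n\in W^{1,1}(0,T)$. For the $L^1$ convergence I would show $0\le k_n\le k$ together with $k_n(t)\to k(t)$ pointwise a.e. (both from the complete monotonicity / Mittag-Leffler representation) and then invoke dominated convergence with the integrable majorant $k$, giving $\|k_n-k\|_{L^1(0,T)}\to 0$.

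The genuine content is the convergence $\frac{d}{dt}(k_n\ast u)\to\frac{d}{dt}(k\ast u)$ in $L^2(0,T;L^2(\Omega))$, read for every $u$ for which the limit lies in $L^2(0,T;L^2(\Omega))$. I would phrase it operator-theoretically: let $Au:=\frac{d}{dt}(k\ast u)={}^{R}D_t^\alpha u$ act on $L^2(0,T;L^2(\Omega))$, noting that $A^{-1}$ is convolution by $l$ because $k\ast l=1$. A Laplace-symbol computation (for $u(0)=0$ on a dense subclass, the general case following from the boundary term in the convolution derivative) shows that $\frac{d}{dt}(k_n\ast\,\cdot\,)$ carries the temporal symbol $n\lambda^\alpha/(n+\lambda^\alpha)$ and therefore coincides with the Yosida approximation $A_n:=A\,(I+\tfrac1n A)^{-1}$. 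Writing $J_n:=(I+\tfrac1n A)^{-1}$ for the resolvent, one has $A_n u=J_n(Au)$ for $u\in D(A)$; granting that $A$ is $m$-accretive—established below—$J_n$ is a contraction with $J_n\to I$ strongly, whence $\|A_n u\|\le\|Au\|$ and $A_n u\to Au$ for every $u\in D(A)$, which is exactly the claim.

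The crux, and the step I expect to be the main obstacle, is to justify that $A=\frac{d}{dt}(k\ast\,\cdot\,)$ is $m$-accretive on $L^2(0,T;L^2(\Omega))$, equivalently that $k$ is a kernel of positive type; here the full strength of $k$ being completely monotone enters. Accretivity amounts to $(\frac{d}{dt}(k\ast v),v)\ge 0$, which I would obtain by approximating $k$ from below by the truncations $\min(k,c)\in W^{1,1}(0,T)$ (trivially nonnegative and nonincreasing), applying the pointwise identity of Lemma \ref{12-5-5} whose last two terms are nonnegative precisely because these approximants are nonnegative and nonincreasing, and passing to the limit using their elementary $L^1$ convergence to $k$; note this uses only truncations, so there is no circularity with the present lemma. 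The range condition $\mathrm{Ran}(I+\tfrac1n A)=L^2(0,T;L^2(\Omega))$ would follow from solvability of the corresponding Volterra equation, its inverse being convolution by the completely monotone kernel $p_n$ with $\widehat{p_n}(\lambda)=(\lambda^\alpha+n)^{-1}$, which lies in $L^1(0,T)$. With accretivity and the range condition in hand, the Yosida machinery of the previous paragraph applies and completes the proof.
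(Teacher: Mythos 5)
Your proposal addresses a statement the paper itself never proves: Lemma \ref{L3.1} is imported from Zacher's work with a citation only, and your resolvent construction ($s_n+n(l\ast s_n)=1$, $k_n:=ns_n$, i.e.\ $k_n(t)=nE_\alpha(-nt^\alpha)$), together with the identification of $\frac{d}{dt}(k_n\ast\cdot)$ as the Yosida approximation of $A=\frac{d}{dt}(k\ast\cdot)$, is precisely the mechanism of that source, so your overall route is the intended one. The structural claims are correct: $s_n$ is completely monotone, so $k_n$ is nonnegative and nonincreasing, and $\int_0^T|k_n'|=k_n(0^+)-k_n(T)=n-k_n(T)<\infty$ gives $k_n\in W^{1,1}(0,T)$. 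The genuine error is the domination $0\le k_n\le k$ on which your $L^1$ step rests. In the variable $x=nt^\alpha$ it says $E_\alpha(-x)\le\frac{1}{\Gamma(1-\alpha)x}$ for all $x>0$, and this is false for every $\alpha\in(\tfrac12,1)$: in the expansion $E_\alpha(-x)=\frac{1}{\Gamma(1-\alpha)x}-\frac{1}{\Gamma(1-2\alpha)x^{2}}+O(x^{-3})$ one has $1-2\alpha\in(-1,0)$, hence $\Gamma(1-2\alpha)<0$ and the second term is \emph{positive}; consequently, for each fixed $t>0$, $k_n(t)>k(t)$ for all large $n$, so neither the sequence nor any subsequence is dominated a.e.\ by $k$. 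The step is easy to repair: either (i) replace the majorant, using the known bound $E_\alpha(-x)\le\bigl(1+x/\Gamma(1+\alpha)\bigr)^{-1}\le\Gamma(1+\alpha)/x$, which gives the uniform estimate $k_n\le\Gamma(1+\alpha)\Gamma(1-\alpha)\,k=\frac{\pi\alpha}{\sin(\pi\alpha)}\,k\in L^1(0,T)$, after which dominated convergence runs as you intended; or (ii) drop pointwise bounds altogether and note from the symbols that $k_n=k\ast\mu_n$ with $\mu_n:=-s_n'\ge0$, $\int_0^\infty\mu_n=1$ and $\int_\delta^\infty\mu_n=s_n(\delta)=E_\alpha(-n\delta^\alpha)\to0$, i.e.\ $(\mu_n)$ is an approximate identity, whence $\|k_n-k\|_{L^1(0,T)}=\|k\ast\mu_n-k\|_{L^1(0,T)}\to0$ by the standard translation-continuity argument, with no Mittag-Leffler estimates at all.

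A secondary, lesser weakness lies in your accretivity step: the passage to the limit $\int_0^T\bigl(\tfrac{d}{dt}(\min(k,c)\ast u),u\bigr)\,dt\to\int_0^T\bigl(\tfrac{d}{dt}(k\ast u),u\bigr)\,dt$ is asserted but not justified, and it does not follow from $\min(k,c)\to k$ in $L^1$ alone, because the time derivative sits outside the convolution. You can avoid this issue entirely: the resolvent $J_n$ is explicitly convolution with $\mu_n$, hence a contraction on $L^2(0,T;L^2(\Omega))$ by Young's inequality ($\|\mu_n\|_{L^1}\le1$), and $J_n\to I$ strongly by the same approximate-identity argument; combined with $\frac{d}{dt}(k_n\ast u)=AJ_nu=J_nAu$ for $u\in D(A)$ (differentiation commutes with convolution by $\mu_n$ because $(k\ast u)(0)=0$ on $D(A)$), these facts alone drive the Yosida argument, and accretivity of $A$ comes out as a corollary rather than being a prerequisite. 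With these two repairs your proof is complete and faithful to the cited construction.
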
 

\section{Main results}\label{11-10-22-52}
\noindent  The connection  between Caputo fractional derivative and Riemann-Liouville fractional derivative  reduces the problem  \eqref{6-10-22-1} into  
\begin{equation}\tag{$R_{\alpha}$}\label{6-10-22-2}
\begin{aligned}
\partial^{\alpha}_{t}u- \nabla\cdot\left(M\left(x,t,\|\nabla u\|^{2}\right)\nabla u\right)&=f(x,t)+\int_{0}^{t}b(x,t,s)u(s)~ds~\text{in}~\Omega \times (0,T],
\end{aligned}
\end{equation}
with initial and boundary conditions
\begin{equation*}
    \begin{aligned}
    u(x,0)&=u_{0}(x) \quad \text{in} ~\Omega,\\
    u(x,t)&=0 \quad \text{on} ~\partial \Omega \times [0,T].\\
    \end{aligned}
\end{equation*}
\noindent The notion of weak formulation for the problem \eqref{6-10-22-2} is described as follows:\\
find 
$u$ in $L^{\infty}\left(0,T;L^{2}(\Omega)\right) \cap L^{2}\left(0,T;H^{1}_{0}(\Omega)\right)$ and $\partial^{\alpha}_{t}u$ in $L^{2}\left(0,T;{L^{2}(\Omega)}\right) $ such that the following equations hold for all $v$ in $H^{1}_{0}(\Omega)$ and $a.e. ~t$ in $(0,T]$
\begin{equation}\tag{$W_{\alpha}$}\label{6-10-22-3}
\begin{aligned}
\left(\partial^{\alpha}_{t}u,v\right)+ \left(M\left(x,t,\|\nabla u\|^{2}\right)\nabla  u,\nabla v\right)
&=\left(f,v\right)+\int_{0}^{t}B(t,s,u(s),v)~ds,~\text{in}~\Omega \times (0,T],\\
u(x,0)&=u_{0}(x) \quad \text{in} ~\Omega.
\end{aligned}
\end{equation}
\begin{thm}\label{t2.5}
	\textbf{(Well-posedness of the weak formulation \eqref{6-10-22-3})} Under the hypotheses (H1), (H2), and (H3)  the problem \eqref{6-10-22-3} admits a unique  solution  that satisfies the following a priori bounds
	\begin{equation}\label{t2.5-1}
	\|u\|_{L^{\infty}\left(0,T;L^{2}(\Omega)\right)}+	\|u\|_{L^{2}_{\alpha}\left(0,T;H^{1}_{0}(\Omega)\right)}\lesssim \left(\|\nabla u_{0}\|+\|f\|_{L^{\infty}(0,T;L^{2}(\Omega))}\right),
	\end{equation}
	\begin{equation}\label{t2.5-2}
	\|u\|_{L^{\infty}(0,T;H^{1}_{0}(\Omega))}+	\|u\|_{L^{2}_{\alpha}\left(0,T;H^{2}(\Omega)\right)} \lesssim \left(\|\nabla u_{0}\|+\|f\|_{L^{\infty}(0,T;L^{2}(\Omega))}\right).
	\end{equation} 
\end{thm}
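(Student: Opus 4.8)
The plan is to establish existence by a Faedo--Galerkin scheme, derive the two a priori bounds \eqref{t2.5-1} and \eqref{t2.5-2} by an energy argument, pass to the limit using the compactness Lemma~\ref{13-5-3}, and finally obtain uniqueness from a separate energy estimate. First I would take the $L^{2}(\Omega)$-orthonormal basis $\{\phi_{j}\}$ of Dirichlet--Laplacian eigenfunctions (which is orthogonal in $H^{1}_{0}(\Omega)$ as well), set $V_{m}=\mathrm{span}\{\phi_{1},\dots,\phi_{m}\}$, and seek $u_{m}(t)=\sum_{j=1}^{m}c^{m}_{j}(t)\phi_{j}$ solving \eqref{6-10-22-3} tested against each $\phi_{j}$, with $u_{m}(0)$ the $V_{m}$-projection of $u_{0}$. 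Since $k\ast l=1$ by Remark~\ref{rmk1}, this system of fractional integro-differential equations can be recast in integral form, and the continuity of $M$ and of the coefficients in (H3) lets Lemma~\ref{13-5-1} furnish a local-in-time solution $u_{m}$; the a priori bounds derived next are uniform in $m$ and will extend $u_{m}$ to all of $[0,T]$.

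For the first bound \eqref{t2.5-1} I would test with $v=u_{m}(t)$. The obstacle here is that the kernel $k(t)=t^{-\alpha}/\Gamma(1-\alpha)$ is not in $W^{1,1}(0,T)$, so Lemma~\ref{12-5-5} does not apply directly; I would circumvent this by replacing $k$ with the regularised kernels $k_{n}\in W^{1,1}(0,T)$ of Lemma~\ref{L3.1}. Because each $k_{n}$ is nonnegative and nonincreasing, the last two terms in the identity of Lemma~\ref{12-5-5} carry a favourable sign, and letting $n\to\infty$ yields the fractional chain-rule inequality $(\partial^{\alpha}_{t}u_{m},u_{m})\ge\tfrac{1}{2}\,\partial^{\alpha}_{t}\|u_{m}\|^{2}$. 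Coercivity of $M$ from (H2) gives $(M\nabla u_{m},\nabla u_{m})\ge m_{0}\|\nabla u_{m}\|^{2}$, while the memory term is controlled by \eqref{1.1a}. After a Young step and convolution with $l(t)=t^{\alpha-1}/\Gamma(\alpha)$ (which inverts the fractional derivative since $k\ast l=1$), recovering $\|u_{m}(t)\|^{2}$ and the weighted $L^{2}_{\alpha}$ norm of $\nabla u_{m}$, the fractional Gronwall inequality of Lemma~\ref{lem2.5} closes the estimate and produces \eqref{t2.5-1} with a constant independent of $m$.

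The second bound \eqref{t2.5-2} would be obtained by testing with $v=-\Delta u_{m}$, which lies in $V_{m}$ when the $\phi_{j}$ are Laplacian eigenfunctions. The same $k_{n}$-regularisation turns the fractional term into $\tfrac{1}{2}\,\partial^{\alpha}_{t}\|\nabla u_{m}\|^{2}$ plus nonnegative remainders, and integrating the diffusion term by parts isolates the leading contribution $m_{0}\|\Delta u_{m}\|^{2}$; the lower-order piece generated by the spatial gradient of $M$ is bounded via the Lipschitz hypothesis (H2) and Young's inequality, using the control of $\|\nabla u_{m}\|$ already granted by \eqref{t2.5-1}, while the memory term is handled by integrating by parts and invoking the smoothness of the coefficients in (H3). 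Convolving with $l$ and applying Lemma~\ref{lem2.5} again, elliptic regularity on the convex domain $\Omega$ (so that $\|u_{m}\|_{H^{2}}\lesssim\|\Delta u_{m}\|$) upgrades this to \eqref{t2.5-2}.

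With both bounds in hand I would pass to the limit: the uniform estimates place $\{u_{m}\}$ in a set satisfying the two hypotheses of Lemma~\ref{13-5-3}, and taking $X=H^{2}(\Omega)\cap H^{1}_{0}(\Omega)$, $Y=H^{1}_{0}(\Omega)$, $Z=L^{2}(\Omega)$ yields strong convergence $u_{m}\to u$ in $L^{2}(0,T;H^{1}_{0}(\Omega))$, hence $\|\nabla u_{m}\|^{2}\to\|\nabla u\|^{2}$ a.e.\ in time. This strong gradient convergence is the crux of the argument, as it is exactly what lets the nonlinear Kirchhoff coefficient $M(x,t,\|\nabla u_{m}\|^{2})$ pass to its limit, the remaining linear and memory terms passing by weak convergence; I therefore expect the simultaneous control of the nonlocal Kirchhoff term and the singular memory kernel to be the main obstacle, which is precisely why the $H^{2}$ bound and the $k_{n}$-regularisation are both indispensable. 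For uniqueness I would take two solutions, test the difference $w=u_{1}-u_{2}$ with itself, use Lemma~\ref{12-5-5} for the fractional term and \eqref{1.1a} for the memory term, and estimate the difference of the diffusion terms by the Lipschitz bound on $M$; here the structural condition $m_{0}-4L_{M}K^{2}>0$ in (H2) is what preserves net coercivity on the left, after which Lemma~\ref{lem2.5} forces $w\equiv0$.
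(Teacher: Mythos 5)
Your overall strategy coincides with the paper's: Galerkin approximation in the span of Dirichlet eigenfunctions, local existence from Lemma \ref{13-5-1}, the kernel regularisation $k_{n}$ of Lemma \ref{L3.1} combined with Lemma \ref{12-5-5} to obtain the fractional chain-rule inequality, convolution with $l$ and the fractional Gr\"{o}nwall Lemma \ref{lem2.5} for the first bound, compactness via Lemma \ref{13-5-3}, and uniqueness through the Lipschitz property of $M$ together with the structural condition $m_{0}-4L_{M}K^{2}>0$. The one genuine divergence is the second estimate: you test with the true Laplacian $-\Delta u_{m}$ (legitimate, since $-\Delta u_{m}\in\mathbb{V}_{m}$ for an eigenfunction basis), integrate by parts, and pay for it with the spatial gradient of $M$ (controlled by the Lipschitz hypothesis) plus elliptic regularity on the convex domain, whereas the paper introduces the discrete operators $\Delta_{m}^{M},\Delta_{m}^{b_{2}}$ defined by $(-\Delta_{m}^{M}u_{m},v_{m})=(M\nabla u_{m},\nabla v_{m})$ and tests with $-\Delta_{m}^{M}u_{m}$, which avoids differentiating $M$ in $x$ altogether. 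Your route is arguably more direct in producing the $H^{2}(\Omega)$ norm claimed in \eqref{t2.5-2}; the paper's route trades that for operator bookkeeping.

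Two steps are missing, however. First, hypothesis 2 of the compactness Lemma \ref{13-5-3} requires a uniform bound on $\|\partial^{\alpha}_{t}u_{m}\|_{L^{2}(0,T;L^{2}(\Omega))}$, and neither \eqref{t2.5-1} nor \eqref{t2.5-2} provides it; you assert that the hypotheses hold ``with both bounds in hand,'' but a third energy estimate is needed. The paper obtains it by testing the Galerkin system \eqref{3.2} with $v_{m}=\partial^{\alpha}_{t}u_{m}$ (estimate \eqref{14-5-3}); alternatively it can be read off from the equation once your $H^{2}$ bound is available. This bound is not cosmetic: it is also what yields the weak convergence $\partial^{\alpha}_{t}u_{m}\rightharpoonup\partial^{\alpha}_{t}u$ in $L^{2}(0,T;L^{2}(\Omega))$, which the weak formulation \eqref{6-10-22-3} demands of a solution. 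Second, you never verify that the limit attains the initial condition $u(0)=u_{0}$. Strong convergence in $L^{2}(0,T;H^{1}_{0}(\Omega))$ carries no information about the trace at $t=0$, so a separate argument is required; the paper integrates the equation against $\phi\in C^{1}([0,T];H^{1}_{0}(\Omega))$ with $\phi(T)=0$, compares the limit of the Galerkin identities with the identity satisfied by $u$, and concludes $(k\ast(u-u_{0}))(0)=0$, hence $u(0)=u_{0}$. Both gaps are fillable within your framework, but as written the existence part of the proof is incomplete.
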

\noindent For the semi discrete formulation of the problem  \eqref{6-10-22-1}, we discretize the domain in the space variable by a conforming FEM \cite{thomee2007galerkin} and keep the time direction continuous. Let $\mathbb{T}_{h}$ be a shape regular (non overlapping), quasi-uniform triangulation of the domain $\Omega$ and $h$ be the discretization parameter in  the space direction. We define a  finite dimensional subspace $X_{h}$ of $H^{1}_{0}(\Omega)$ as 
 $$X_{h}:=\{v_{h}\in C(\bar \Omega)~:~v_{h}|_{\tau}~ \text{is a linear polynomial for all}~ \tau \in \mathbb{T}_{h} ~\text{and} ~v_{h}=0 ~\text{on}~\partial \Omega\}.$$
 \par The semi discrete formulation for the problem \eqref{6-10-22-1} is to seek $u_{h}$ in $X_{h}$ such that the following equations hold for all $v_{h}$ in $X_{h}$ and $a.e.$ $t$ in $(0,T]$ 
 \begin{equation}\tag{$S_{\alpha}$}\label{6-10-22-4}
 \begin{aligned}
 \left(\partial^{\alpha}_{t}u_{h},v_{h}\right) &+\left(M\left(x,t,\|\nabla u_{h}\|^{2}\right)\nabla u_{h},\nabla v_{h}\right)\\
 &=(f,v_{h})+\int_{0}^{t}B(t,s,u_{h}(s),v_{h})~ds~\text{in}~\mathbb{T}_{h}\times (0,T],\\
 u_{h}(x,0)&=u_{h}^{0} ~\text{in}~\mathbb{T}_{h},
 \end{aligned}
 \end{equation}
 where initial condition $u_{h}^{0}$ is in $X_{h}$ which  will be chosen later in the proof of Theorem \ref{t2.6}.
\begin{thm}\label{t2.6}
	\textbf{(Error estimate for the semi discrete  formulation  \eqref{6-10-22-4})} Suppose that hypotheses (H1), (H2), and (H3) hold. Then  we have the following error estimate  for the solution  $u_{h}$ 
of the semi discrete scheme \eqref{6-10-22-4}
\begin{equation}
	\|u-u_{h}\|_{L^{\infty}\left(0,T;L^{2}(\Omega)\right)}+\|u-u_{h}\|_{L^{2}_{\alpha}\left(0,T;H^{1}_{0}(\Omega)\right)} \lesssim  h,
	\end{equation}
	provided that $u(t)$ is in $H^{2}(\Omega)\cap H^{1}_{0}(\Omega)$ for a.e. $t$ in $[0,T]$.
\end{thm}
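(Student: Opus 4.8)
The plan is to follow the classical elliptic--projection splitting, with the modified Ritz--Volterra projection $R_h$ playing the role that the Ritz projection plays for parabolic problems. First I would fix, for a.e.\ $t$, the modified Ritz--Volterra projection $R_hu(t)\in X_h$ defined through the bilinear form built from the \emph{frozen} Kirchhoff coefficient $M(x,t,\|\nabla u\|^2)$ together with the memory form $B$, and split
\begin{equation*}
u-u_h=(u-R_hu)+(R_hu-u_h)=:\rho+\theta .
\end{equation*}
Choosing the discrete initial datum $u_h^0:=R_hu(0)$ forces $\theta(0)=0$, so that $\partial^\alpha_t\theta=\frac{d}{dt}(k\ast\theta)$ carries no boundary contribution. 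The term $\rho$ is controlled directly by the best-approximation properties of $R_h$: under the assumed regularity $u(t)\in H^2(\Omega)\cap H^1_0(\Omega)$ these give $\|\rho\|\lesssim h^2\|u\|_{H^2}$ and $\|\nabla\rho\|\lesssim h\|u\|_{H^2}$, together with the analogous bound on $\partial^\alpha_t\rho$. Since the $L^2_\alpha(0,T;H^1_0)$ norm only sees $\|\nabla\rho\|$, the contribution of $\rho$ to the final estimate is already $O(h)$, which fixes the expected rate.

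Next I would derive the equation governing $\theta$. Restricting the weak formulation \eqref{6-10-22-3} to $v=v_h\in X_h$ and subtracting the semidiscrete equation \eqref{6-10-22-4} yields an identity for $u-u_h$; the defining property of $R_h$ is engineered precisely so that the elliptic part and the memory part acting on $\rho$ cancel. What survives is an error equation of the schematic form
\begin{equation*}
\left(\partial^\alpha_t\theta,v_h\right)+\left(M(x,t,\|\nabla u_h\|^2)\nabla\theta,\nabla v_h\right)=-\left(\partial^\alpha_t\rho,v_h\right)+\left(\big[M(x,t,\|\nabla u\|^2)-M(x,t,\|\nabla u_h\|^2)\big]\nabla R_hu,\nabla v_h\right)+\int_0^t B(t,s,\theta(s),v_h)\,ds,
\end{equation*}
the second term on the right being the genuinely nonlinear (Kirchhoff) contribution that distinguishes this analysis from the linear case.

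For the energy argument I would take $v_h=\theta$. The fractional term is handled by Lemma~\ref{12-5-5}: since the kernel $k$ is singular, I would first replace $k$ by the regularized kernels $k_n\in W^{1,1}(0,T)$ of Lemma~\ref{L3.1}, apply the identity, discard the two manifestly nonnegative terms (using that $k_n$ is nonincreasing), and pass to the limit $n\to\infty$ to obtain $\left(\partial^\alpha_t\theta,\theta\right)\ge\frac12\frac{d}{dt}\left(k\ast\|\theta\|^2\right)$. Coercivity of the diffusion term gives $\ge m_0\|\nabla\theta\|^2$ by (H2). The crux is the Kirchhoff term: writing $|M(\|\nabla u\|^2)-M(\|\nabla u_h\|^2)|\le L_M\big|\,\|\nabla u\|^2-\|\nabla u_h\|^2\,\big|$ and factoring the difference of squares, I would use the a priori bound $\|\nabla u\|\le K$ from Theorem~\ref{t2.5} together with the discrete counterpart $\|\nabla u_h\|\le K$ (obtained by running the same energy argument on \eqref{6-10-22-4}) to estimate this term by a constant multiple of $K^2\big(\|\nabla\rho\|+\|\nabla\theta\|\big)\|\nabla\theta\|$. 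The $\|\nabla\theta\|^2$ part is absorbed into the coercive term thanks to the structural smallness condition $m_0-4L_MK^2>0$ in (H2), while the $\|\nabla\rho\|\,\|\nabla\theta\|$ and $(\partial^\alpha_t\rho,\theta)$ terms are handled by Young's inequality; the memory term is estimated through \eqref{1.1a}, producing a Volterra contribution in $\theta$.

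Finally I would convolve the resulting differential inequality with the kernel $l(t)=t^{\alpha-1}/\Gamma(\alpha)$. Using $k\ast l=1$ (Remark~\ref{rmk1}) and $\theta(0)=0$, the convolution turns $\frac{d}{dt}(k\ast\|\theta\|^2)$ into $\|\theta(t)\|^2$ and turns $\|\nabla\theta\|^2$ into its $\alpha$-weighted average, which is exactly the $L^2_\alpha(0,T;H^1_0)$ quantity; this is how both norms on the left-hand side are produced simultaneously. The residual data coming from $\rho$ and $\partial^\alpha_t\rho$ are $O(h^2)$, while the surviving contributions from $(\partial^\alpha_t\rho,\theta)$ and from the memory term assemble into the form $g(t)\int_0^t(t-s)^{\alpha-1}\|\theta(s)\|^2\,ds$, so the fractional Gronwall inequality (Lemma~\ref{lem2.5}) closes the estimate and bounds $\theta$ by the $\rho$-terms. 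A triangle inequality then combines the $\rho$ and $\theta$ bounds to deliver the claimed $O(h)$ rate. I expect the main obstacle to be the nonlinear Kirchhoff term: making the absorption into the coercive term rigorous requires both the continuous and discrete a priori gradient bounds at the scale $K$ and the quantitative condition $m_0-4L_MK^2>0$, and it is here that the modified definition of $R_h$ (freezing the coefficient at the exact solution) is essential to keep the coefficient-mismatch term in the clean form displayed above rather than entangled with $\partial^\alpha_t\rho$.
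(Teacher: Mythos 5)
Your proposal is correct and follows essentially the same route as the paper's own proof: the identical splitting $u-u_{h}=\rho+\theta$ through the modified Ritz--Volterra projection (the paper's $W$, defined in \eqref{2.5}), the choice $u_{h}^{0}=W(0)$ so that $\theta(0)=0$, the same error equation tested with $v_{h}=\theta$, absorption of the Kirchhoff coefficient mismatch via $m_{0}-4L_{M}K^{2}>0$ using the continuous, discrete, and projection gradient bounds, the kernel-regularization/convolution-with-$l$ treatment of the fractional term, the fractional Gr\"{o}nwall Lemma \ref{lem2.5}, and a concluding triangle inequality. No gaps worth noting; the only implicit step is the stability bound $\|\nabla W\|\lesssim\|\nabla u\|$ (the paper's Lemma \ref{lem2.8}), which your ``constant multiple of $K^{2}$'' silently invokes.
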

\noindent Further,  we move to the fully discrete formulation of the problem \eqref{6-10-22-1} for that we divide the interval $[0,T]$  into sub intervals of uniform step size $k$ and $t_{n}=nk$ for $n=0,1,2,3,\dots,N$ with $t_{N}=T$. We approximate  the Caputo fractional derivative by L1 scheme, Kirchhoff type nonlinearity by linearization, and memory term by modified Simpson's rule as follows\\
\noindent  \textbf{L1 approximation scheme \cite{li2016analysis}:} In this scheme, Caputo fractional derivative is approximated at the point $t_{n}$ using linear interpolation or backward Euler difference formula  as follows
\begin{equation}\label{2.9}
\begin{aligned}
^{C}D^{\alpha}_{t_{n}}u~&=\frac{1}{\Gamma(1-\alpha)}\int_{0}^{t_{n}}\frac{1}{(t_{n}-s)^{\alpha}}\frac{\partial u}{\partial s}ds\\
&=\frac{1}{\Gamma(1-\alpha)}\sum_{j=1}^{n}\frac{u^{j}-u^{j-1}}{k}\int_{t_{j-1}}^{t_{j}}\frac{1}{(t_{n}-s)^{\alpha}}~ds+\mathbb{Q}^{n}\\
&=\frac{k^{-\alpha}}{\Gamma(2-\alpha)}\sum_{j=1}^{n}a_{n-j}\left(u^{j}-u^{j-1}\right)+\mathbb{Q}^{n}\\
&=\mathbb{D}^{\alpha}_{t}u^{n}+\mathbb{Q}^{n}
\end{aligned}
\end{equation}
where $a_{i}=(i+1)^{1-\alpha}-i^{1-\alpha},~i\geq0$, $u^{j}=u(x,t_{j})$, and $\mathbb{Q}^{n}$ is the truncation error. \\
\textbf{Linearization:} For nonlinear term we use the following linearized approximation of $u$ at $t_{n}$ given by 
\begin{equation}\label{7-10-22-1}
\begin{aligned}
    u^{n}&\approx 2u^{n-1}-u^{n-2},~\text{for}~n~\geq~2\\
    &:=\bar{u}^{n-1}.
    \end{aligned}
\end{equation}
\noindent \textbf{Modified Simpson's rule \cite{pani1992numerical}:} Let $m_{1}=[k^{-1/2}]$, where $[\cdot]$ denotes the greatest integer  function. Set $k_{1}=m_{1}k$ and $\bar{t}_{j}=jk_{1}$. Let $j_{n}$ be the largest even integer such that $\bar{t}_{j_{n}} <t_{n}$ and introduce quadrature points 
 \[
\bar{t}_{j}^{n}=\begin{cases}
 &jk_{1},\quad 0\leq j\leq j_{n},\\
 &\bar{t}_{j}^{n}+(j-j_{n})k,\quad j_{n}\leq j\leq J_{n},
 \end{cases}
 \]
 where $\bar{t}_{J_{n}}^{n}=t_{n-1}$.
 Then quadrature rule for any function $g$ is as follows
\begin{equation}\label{S1.14}
\begin{aligned}
\int_{0}^{t_{n}}g(s)~ds&=\sum_{j=0}^{n-1}w_{nj}g(t_{j})+q^{n}(g)\\
&=\frac{k_{1}}{3}\sum_{j=1}^{j_{n}/2}\left[g(\bar{t}_{2j}^{n})+4g(\bar{t}_{2j-1}^{n})+g(\bar{t}_{2j-2}^{n})\right]\\
&+\frac{k}{2}\sum_{j=j_{n}+1}^{J_{n}}\left[g(\bar{t}_{j}^{n})+g(\bar{t}_{j-1}^{n})\right]+kg(\bar{t}_{J_{n}}^{n})+q^{n}(g),
\end{aligned}
\end{equation}
where $w_{nj}$ are called quadrature weights and $q^{n}(g)$ is the quadrature error associated with the function $g$ at $t_{n}$.
\par On the basis of approximations  \eqref{2.9}, \eqref{7-10-22-1}, and  \eqref{S1.14} we develop the following linearized L1 Galerkin FEM.\\
\noindent \textbf{ Linearized L1 Galerkin FEM:} Find $u_{h}^{n} ~(n=1,2,3,\dots,N)$ in $X_{h}$ with $\bar{u}_{h}^{n-1}=2u_{h}^{n-1}-u_{h}^{n-2}$ such that the following equations hold for all $v_{h}$ in $X_{h}$ \\
For $n\geq 2,$
\begin{equation}\tag{$E_{\alpha}$}\label{7-10-22-2}
\left(\mathbb{D}^{\alpha}_{t}u_{h}^{n},v_{h}\right)+\left(M\left(x,t_{n},\|\nabla \bar{u}_{h}^{n-1}\|^{2}\right)\nabla u_{h}^{n},\nabla v_{h}\right)=\left(f^{n},v_{h}\right)+\sum_{j=1}^{n-1}w_{nj}B\left(t_{n},t_{j},u_{h}^{j},v_{h}\right).
\end{equation}
For $n=1$,
\begin{equation*}
    \begin{aligned}
\left(\mathbb{D}^{\alpha}_{t}u_{h}^{1},v_{h}\right)+\left(M\left(x,t_{1},\|\nabla u_{h}^{1}\|^{2}\right)\nabla u_{h}^{1},\nabla v_{h}\right)=\left(f^{1},v_{h}\right)+kB\left(t_{1},t_{0},u_{h}^{0},v_{h}\right),
\end{aligned}
\end{equation*}
with  initial condition $u_{h}^{0}$ that is to be chosen later in the proof of Theorem \ref{t2.6}.\\ 
\noindent To access the convergence rate of the developed numerical scheme \eqref{7-10-22-2}, we need the following discrete kernel corresponding to the kernel $(a_{j})$
\begin{lem}\label{L1.2} \cite{li2016analysis}
	Let $p_{n}$ be a sequence defined by 
	\begin{equation*}
	p_{0}=1,~~	p_{n}=\sum_{j=1}^{n}(a_{j-1}-a_{j})p_{n-j}~~\text{for}~~n\geq 1.
	\end{equation*}
	Then $p_{n}$ satisfies
		 \begin{equation}
0<p_{n}<1,
		\end{equation} 
		\begin{equation}\label{2.81}
		\sum_{j=k}^{n}p_{n-j}a_{j-k}=1,\quad 1\leq k \leq n,
		\end{equation}
		\begin{equation}\label{2.91}
		\Gamma(2-\alpha)\sum_{j=1}^{n}p_{n-j}\leq \frac{n^{\alpha}}{\Gamma(1+\alpha)}.
		\end{equation}	
\end{lem}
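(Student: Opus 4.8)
The plan is to establish the three properties in the order \eqref{2.81}, then the pointwise bound $0<p_n<1$, then \eqref{2.91}, because the convolution identity \eqref{2.81} is the engine that drives the rest. First I would record the elementary facts about the L1 weights: from $a_i=(i+1)^{1-\alpha}-i^{1-\alpha}$ and the strict concavity of $x\mapsto x^{1-\alpha}$ on $(0,\infty)$ one gets $a_0=1$, $a_i>0$, $a_i$ strictly decreasing, and the telescoping sum $\sum_{i=0}^{m-1}a_i=m^{1-\alpha}$. In particular $c_j:=a_{j-1}-a_j>0$ for $j\ge1$, so the defining recursion $p_n=\sum_{j=1}^n c_j\,p_{n-j}$ has strictly positive coefficients.

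For \eqref{2.81} I would prove the equivalent statement that $P_m:=\sum_{i=0}^m a_i\,p_{m-i}=1$ for every $m\ge0$ (the choice $m=n-k$ recovers the stated form). The one real step is to substitute the recursion for $p_m$ into $P_m=p_m+\sum_{i=1}^m a_i\,p_{m-i}$ and telescope: $\sum_{j=1}^m(a_{j-1}-a_j)p_{m-j}+\sum_{i=1}^m a_i\,p_{m-i}=\sum_{j=1}^m a_{j-1}\,p_{m-j}=P_{m-1}$, so $P_m=P_{m-1}$ for all $m\ge1$, and $P_0=a_0p_0=1$ closes it. The pointwise bound then follows immediately: positivity $p_n>0$ is a one-line induction on the recursion with positive coefficients $c_j$, while $P_n=1$ gives $p_n=1-\sum_{i=1}^n a_i\,p_{n-i}<1$ for $n\ge1$ (with $p_0=1$).

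For \eqref{2.91} I would pass to the cumulative sums $d_l:=\sum_{i=0}^{l-1}p_i=\sum_{j=1}^l p_{l-j}$, so the claim becomes $\Gamma(1+\alpha)\Gamma(2-\alpha)\,d_n\le n^\alpha$. Summing $P_m=1$ over $m=0,\dots,n-1$ and interchanging summations yields the clean identity
\[
\sum_{l=1}^{n} a_{n-l}\,d_l = n .
\]
Next I would compare $d$ with $\psi_l:=l^\alpha/(\Gamma(1+\alpha)\Gamma(2-\alpha))$, the exact solution of the continuous analogue. Using \eqref{2.81} one checks that $q_j:=p_j-p_{j-1}$ (with $p_{-1}:=0$) is the convolution inverse of $(a_i)$, namely $\sum_{j=0}^m a_{m-j}\,q_j=\delta_{m0}$. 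Setting $g_n:=\sum_{l=1}^n a_{n-l}\,\psi_l-n$ and inverting gives $\psi_n-d_n=\sum_{j=1}^n q_{n-j}\,g_j$, and a summation by parts produces
\[
\psi_n-d_n = p_{n-1}\,g_1 + \sum_{i=0}^{n-2} p_i\,(g_{n-i}-g_{n-1-i}),
\]
which is nonnegative as soon as $(g_n)$ is nondecreasing, since $p_i\ge0$ by the pointwise bound. This would give $d_n\le\psi_n$, i.e. \eqref{2.91}.

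The whole lemma therefore reduces to showing that $(g_n)$ is nondecreasing, and this is the main obstacle. Inserting $\psi_l=l^\alpha/C$ with $C=\Gamma(1+\alpha)\Gamma(2-\alpha)$ and performing one more Abel summation, the inequality $g_n-g_{n-1}\ge0$ is equivalent to
\[
\Phi_n:=\sum_{m=0}^{n-1} a_m\,b_{n-m}\ \ge\ \Gamma(1+\alpha)\Gamma(2-\alpha),\qquad b_k:=k^\alpha-(k-1)^\alpha .
\]
Writing $a_m=\int_m^{m+1}(1-\alpha)s^{-\alpha}\,ds$ and $b_k=\int_{k-1}^{k}\alpha s^{\alpha-1}\,ds$, the corresponding continuous convolution is $\int_0^n \alpha(1-\alpha)\,s^{-\alpha}(n-s)^{\alpha-1}\,ds=\alpha(1-\alpha)B(1-\alpha,\alpha)=\Gamma(1+\alpha)\Gamma(2-\alpha)$ for every $n$, by the Beta integral. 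Thus $\Phi_n\ge C$ says exactly that this product-quadrature of a convolution of two power functions never falls below its (constant) continuous value; since $\Phi_n\to C$ as $n\to\infty$, the estimate is sharp and no lossy bound can succeed. Pinning down the correct sign of this quadrature error — exploiting the convexity of $s\mapsto s^{\alpha-1}$ together with the monotonicity of the weights $a_m$ and $b_k$, analyzed block by block — is the crux; everything else (the telescoping for \eqref{2.81}, the cumulative identity, and the two summations by parts) is routine bookkeeping.
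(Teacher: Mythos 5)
Your arguments for \eqref{2.81} and for $0<p_{n}<1$ are correct and complete: showing that $P_{m}:=\sum_{i=0}^{m}a_{i}p_{m-i}$ satisfies $P_{m}=P_{m-1}$ under the defining recursion, with $P_{0}=1$, is exactly the right mechanism, and positivity plus $p_{n}=1-\sum_{i=1}^{n}a_{i}p_{n-i}<1$ then follow immediately. The reduction of \eqref{2.91} is also sound as far as it goes: the cumulative identity $\sum_{l=1}^{n}a_{n-l}d_{l}=n$, the convolution inverse $q_{j}=p_{j}-p_{j-1}$, and the two Abel summations are all valid (note the boundary term also requires $g_{1}\geq 0$, which is the case $n=1$ of the same inequality). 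But your proof stops precisely where \eqref{2.91} actually lives: everything is reduced to the sharp inequality $\Phi_{n}:=\sum_{m=0}^{n-1}a_{m}b_{n-m}\geq\Gamma(1+\alpha)\Gamma(2-\alpha)=:C$, with $b_{k}=k^{\alpha}-(k-1)^{\alpha}$, and that inequality is asserted to be ``the crux'' but never proved. Since every other step is, as you say, bookkeeping, this is a genuine gap and not a detail; the paper itself offers no proof to fall back on (the lemma is quoted from \cite{li2016analysis}), so your argument has to stand on its own.

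The gap can be closed by exactly the block-by-block monotonicity argument you gesture at, and it is short. Write $a_{j-i}=(1-\alpha)\int_{j-i}^{j-i+1}s^{-\alpha}\,ds=(1-\alpha)\int_{i-1}^{i}(j-\tau)^{-\alpha}\,d\tau$ and $b_{i}=\alpha\int_{i-1}^{i}\tau^{\alpha-1}\,d\tau$. On each unit block $[i-1,i]$ the function $\tau\mapsto(j-\tau)^{-\alpha}$ is increasing while $\tau\mapsto\tau^{\alpha-1}$ is decreasing, so Chebyshev's integral inequality for oppositely monotone functions gives
\begin{equation*}
a_{j-i}\,b_{i}=\alpha(1-\alpha)\left(\int_{i-1}^{i}(j-\tau)^{-\alpha}\,d\tau\right)\left(\int_{i-1}^{i}\tau^{\alpha-1}\,d\tau\right)\geq\alpha(1-\alpha)\int_{i-1}^{i}(j-\tau)^{-\alpha}\tau^{\alpha-1}\,d\tau,
\end{equation*}
and summing over $i=1,\dots,j$ yields $\Phi_{j}\geq\alpha(1-\alpha)\int_{0}^{j}(j-\tau)^{-\alpha}\tau^{\alpha-1}\,d\tau=\alpha(1-\alpha)\Gamma(\alpha)\Gamma(1-\alpha)=C$, exactly what you needed. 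Two further remarks. First, $\Phi_{j}\geq C$ is precisely the statement $\mathbb{D}^{\alpha}_{t}v^{j}\geq 1={}^{C}D^{\alpha}_{t}v(t_{j})$ for $v(t)=t^{\alpha}/\Gamma(1+\alpha)$ with unit step, i.e.\ the L1 operator \eqref{2.9} over-estimates the Caputo derivative of a concave function; that is the form in which this fact is usually proved (via the sign of the interpolation error). Second, once it is known, your $d_{n}$, $g_{n}$ and summation-by-parts scaffolding is unnecessary, because \eqref{2.81} finishes the job in one line:
\begin{equation*}
\Gamma(2-\alpha)\sum_{j=1}^{n}p_{n-j}\leq\sum_{j=1}^{n}p_{n-j}\sum_{i=1}^{j}a_{j-i}\left(v^{i}-v^{i-1}\right)=\sum_{i=1}^{n}\left(v^{i}-v^{i-1}\right)\sum_{j=i}^{n}p_{n-j}a_{j-i}=v^{n}-v^{0}=\frac{n^{\alpha}}{\Gamma(1+\alpha)},
\end{equation*}
where the inequality uses $p_{n-j}>0$ together with $\sum_{i=1}^{j}a_{j-i}(v^{i}-v^{i-1})=\Phi_{j}/\Gamma(1+\alpha)\geq\Gamma(2-\alpha)$, and the middle equality is \eqref{2.81}. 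So your plan is salvageable and your reduction lands on the right target, but as submitted the key inequality \eqref{2.91} remains unproved.
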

\begin{thm}\label{J1}
	\textbf{(Convergence estimate for the numerical scheme \eqref{7-10-22-2})} Under the hypotheses (H1), (H2), and (H3) the fully discrete solution  $u_{h}^{n}~(1\leq n \leq N)$ of the scheme \eqref{7-10-22-2} converges to the solution $u$ of the problem \eqref{6-10-22-1} with the following rate of accuracy
	\begin{equation}\label{11-10-22-49}
	\max_{1\leq n \leq N}\|u(t_{n})-u_{h}^{n}\|+\left(k^{\alpha}\sum_{n=1}^{N}p_{N-n}\|\nabla u(t_{n}) -\nabla u_{h}^{n}\|^{2}\right)^{1/2}\lesssim (h+k^{2-\alpha}).
	\end{equation}
\end{thm}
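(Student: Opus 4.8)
The plan is to split the error through the modified Ritz--Volterra projection $R_h$ introduced in Section~\ref{11-10-22-55} and then run a discrete energy estimate in the spirit of Lemma~\ref{12-5-5}, closed off by the fractional Gronwall inequality of Lemma~\ref{lem2.5}. Writing $u(t_n)-u_h^n=\big(u(t_n)-R_hu(t_n)\big)+\big(R_hu(t_n)-u_h^n\big)=:\rho^n+\theta^n$, the term $\rho^n$ lies outside $X_h$ but is controlled purely by the best-approximation properties of $R_h$, giving $\|\rho^n\|\lesssim h^2$ and $\|\nabla\rho^n\|\lesssim h$, together with the analogous bound $\|\mathbb{D}^\alpha_t\rho^n\|\lesssim h^2$ that follows from the best-approximation estimates and the temporal smoothness of $u$. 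Since $\theta^n\in X_h$, all the genuine work is to bound $\theta^n$ in the two norms appearing on the left of \eqref{11-10-22-49}.

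First I would derive the error equation for $\theta^n$. Subtracting the scheme \eqref{7-10-22-2} from the weak form \eqref{6-10-22-3} evaluated at $t=t_n$ and restricted to $v_h\in X_h$, then inserting the splitting, yields an identity of the form
\begin{equation*}
\big(\mathbb{D}^\alpha_t\theta^n,v_h\big)+\big(M(x,t_n,\|\nabla\bar u_h^{n-1}\|^2)\nabla\theta^n,\nabla v_h\big)=\big(\mathcal{E}^n,v_h\big)+\sum_{j=1}^{n-1}w_{nj}B(t_n,t_j,\theta^j,v_h),
\end{equation*}
where $\mathcal{E}^n$ collects (i) the L1 truncation error $\mathbb{Q}^n=O(k^{2-\alpha})$ from \eqref{2.9}, (ii) the quadrature error $q^n$ of the modified Simpson rule \eqref{S1.14}, (iii) the linearization error from $\bar u_h^{n-1}\approx u(t_n)$, which is $O(k^2)$ by Taylor expansion, (iv) the projection residual $\mathbb{D}^\alpha_t\rho^n$ together with the memory-quadrature remainder in $\rho^j$ (the elliptic-memory part of $\rho$ being cancelled by the very construction of $R_h$), and (v) the Kirchhoff consistency term $\big((M(x,t_n,\|\nabla\bar u_h^{n-1}\|^2)-M(x,t_n,\|\nabla u(t_n)\|^2))\nabla R_hu(t_n),\nabla v_h\big)$. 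Each of (i)--(iv) is bounded in $L^2(\Omega)$ at the claimed order $h+k^{2-\alpha}$, so the crux is (v).

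Next I would test with $v_h=\theta^n$. The elliptic term is bounded below using the uniform coercivity $M\ge m_0$ from (H2); this requires $\|\nabla\bar u_h^{n-1}\|$ to remain bounded, which I would secure by an induction on $n$ fed by the a priori bounds for $u_h^n$ established in Section~\ref{11-10-22-56}. For the time term I would invoke the discrete analogue of Lemma~\ref{12-5-5}, namely $(\mathbb{D}^\alpha_t\theta^n,\theta^n)\ge\tfrac12\mathbb{D}^\alpha_t\|\theta^n\|^2$. The consistency term (v) is handled by the Lipschitz continuity of $M$: the factor $\big|\,\|\nabla\bar u_h^{n-1}\|^2-\|\nabla u(t_n)\|^2\big|$ passes through $\|\nabla(\bar u_h^{n-1}-u(t_n))\|$, which splits into $O(k^2)$ (linearization), $O(h)$ (projection), and the previous gradient errors $\|\nabla\theta^{n-1}\|,\|\nabla\theta^{n-2}\|$ — precisely the quantities the weighted sum on the left of \eqref{11-10-22-49} is built to absorb. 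The memory sum is treated with \eqref{1.1a} and the boundedness of the quadrature weights.

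Finally I would convert the resulting discrete inequality into the stated estimate. Multiplying by the discrete kernel $p_{N-n}$ of Lemma~\ref{L1.2}, summing over $n$, and using the telescoping identity \eqref{2.81} to invert $\mathbb{D}^\alpha_t$ and the bound \eqref{2.91} on $\sum_j p_{N-j}$, I obtain $\max_n\|\theta^n\|^2+k^\alpha\sum_n p_{N-n}\|\nabla\theta^n\|^2\lesssim (h+k^{2-\alpha})^2+k^\alpha\sum_n p_{N-n}(\|\nabla\theta^{n-1}\|^2+\|\nabla\theta^{n-2}\|^2)$, and the fractional Gronwall inequality of Lemma~\ref{lem2.5} then closes the recursion, after which the triangle inequality with the $\rho^n$ bounds yields \eqref{11-10-22-49}. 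The main obstacle, as flagged above, is term (v): controlling the linearized Kirchhoff coefficient so that its consistency error reproduces exactly the weighted gradient norm on the left, while maintaining — via the induction on the a priori bounds — the uniform lower bound $m_0$ needed for coercivity at every time level.
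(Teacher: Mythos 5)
Your proposal follows essentially the same route as the paper's proof: split the error via the modified Ritz--Volterra projection (the paper's $W$, your $R_h$), derive the error equation for $\theta^n$, test with $v_h=\theta^n$ using coercivity from (H2) and the Lipschitz bound on the Kirchhoff consistency term (split into linearization, projection, and $\|\nabla\theta^{n-1}\|,\|\nabla\theta^{n-2}\|$ contributions), then multiply by the discrete kernel $p_{N-n}$ of Lemma \ref{L1.2}, sum, and close with a Gr\"{o}nwall argument, exactly as the paper does. The only inaccuracies are minor: you cite Lemma \ref{lem2.5} (a continuous-time fractional Gr\"{o}nwall inequality for integrable functions) to close what is a discrete recursion, whereas the paper invokes the discrete Gr\"{o}nwall inequality, which is what is actually required there; and the paper handles $n=1$ separately, since the first step of \eqref{7-10-22-2} uses $M(x,t_1,\|\nabla u_h^1\|^2)$ rather than the linearized coefficient, so its Kirchhoff term must be absorbed using $m_0-4L_MK^2>0$ instead of appearing as a lagged gradient error.
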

\noindent  At this point one can see that convergence rate is of $O(k^{2-\alpha})$ in the temporal direction. To improve this convergence rate a new linearized fractional Crank-Nicolson-Galerkin FEM is proposed. In this scheme we replace the L1 approximation of the Caputo fractional derivative with L2-1$_{\sigma}$~$(\sigma=\frac{\alpha}{2})$ scheme \cite{alikhanov2015new}  at $t_{n-\sigma}$~$(t_{n-\sigma}=(1-\sigma)t_{n}+\sigma t_{n-1})$, linearization technique for nonlinearity at $t_{n-\sigma}$, and modified Simpson's rule for the memory term at $t_{n-\sigma}$.\\
\noindent \textbf{L2-1$_{\sigma}$ approximation scheme \cite{alikhanov2015new}:} In this scheme, Caputo fractional derivative is approximated at the point $t_{n-\sigma}$  as  follows

\begin{equation}\label{1.20}
^{C}D^{\alpha}_{t_{n-\sigma}}=\tilde{\mathbb{D}}^{\alpha}_{t_{n-\sigma}}u+\tilde{\mathbb{Q}}^{n-\sigma},
\end{equation}
where 
\begin{equation} \tilde{\mathbb{D}}^{\alpha}_{t_{n-\sigma}}u=\frac{k^{-\alpha}}{\Gamma(2-\alpha)}\sum_{j=1}^{n}\tilde{c}_{n-j}^{(n)}\left(u^{j}-u^{j-1}\right), 
\end{equation}
with weights $\tilde{c}_{n-j}^{(n)}$ satisfying $\tilde{c}_{0}^{(1)}=\tilde{a}_{0}$ for $n=1$ and for $n\geq 2$
\begin{equation}
\tilde{c}_{j}^{(n)}=\begin{cases}
\tilde{a}_{0}+\tilde{b}_{1}, & j=0,\\
\tilde{a}_{j}+\tilde{b}_{j+1}-\tilde{b}_{j}, & 1\leq j \leq n-2,\\
\tilde{a}_{j}-\tilde{b}_{j},& j=n-1,
\end{cases}
\end{equation}
where 
\begin{equation*}
\tilde{a}_{0}=(1-\sigma)^{1-\alpha}~\text{and}~
\tilde{a}_{l}=\left(l+1-\sigma\right)^{1-\alpha}-\left(1-\sigma\right)^{1-\alpha} \quad l\geq 1,
\end{equation*}
\begin{equation*}
\tilde{b}_{l}=\frac{1}{(2-\alpha)}\left[\left(l+1-\sigma\right)^{2-\alpha}-\left(l-\sigma\right)^{2-\alpha}\right]-\frac{1}{2}\left[\left(l+1-\sigma\right)^{1-\alpha}+\left(l-\sigma\right)^{1-\alpha}\right]~l\geq 1,
\end{equation*}
with  $\tilde{\mathbb{Q}}^{n-\sigma}$ is the  truncation error.\\
\noindent \textbf{Linearization:} Linearized approximation of  the nonlinearity and diffusion at $t_{n-\sigma}$ is given below.  For Kirchhoff term 
\begin{equation}\label{7-10-22-4}
\begin{aligned}
   u^{n-\sigma}&\approx (2-\sigma)u^{n-1}-(1-\sigma)u^{n-2},~\text{for}~n~\geq~2\\
   &:=\bar{u}^{n-1,\sigma},
   \end{aligned}
\end{equation}
and for diffusion term 
\begin{equation}\label{7-10-22-5}
\begin{aligned}
   u^{n-\sigma}&\approx (1-\sigma)u^{n}+(\sigma)u^{n-1},~\text{for}~n~\geq~1\\
   &:=\hat{u}^{n,\sigma}.
   \end{aligned}
\end{equation}
\noindent \textbf{Modified Simpson's rule:} With a small modification in  \eqref{S1.14} we obtain the following approximation of memory term on $[0,t_{n-\sigma}]$
\begin{equation}\label{S1.14AD}
\begin{aligned}
\int_{0}^{t_{n-\sigma}}g(s)~ds&=\sum_{j=0}^{n-1}\tilde{w}_{nj}g(t_{j})+\tilde{q}^{n-\sigma}(g)\\
&=\frac{k_{1}}{3}\sum_{j=1}^{j_{n}/2}\left[g(\bar{t}_{2j}^{n})+4g(\bar{t}_{2j-1}^{n})+g(\bar{t}_{2j-2}^{n})\right]\\
&+\frac{k}{2}\sum_{j=j_{n}+1}^{J_{n}}\left[g(\bar{t}_{j}^{n})+g(\bar{t}_{j-1}^{n})\right]+\left(1-\sigma\right)kg(\bar{t}_{J_{n}}^{n})+\tilde{q}^{n-\sigma}(g),
\end{aligned}
\end{equation}
where $\tilde{q}^{n-\sigma}(g)$ is the quadrature error associated with the function $g$ at $t_{n-\sigma}$.\\
\noindent By combining all approximations \eqref{1.20}-\eqref{S1.14AD}, we construct the following linearized  L2-1$_{\sigma}$ Galerkin FEM.\\
\noindent \textbf{Linearized  L2-1$_\sigma$ Galerkin  FEM:} Find $u_{h}^{n}~(n=1,2,3,\dots,N)$ in $X_{h}$ with $\bar{u}_{h}^{n-1,\sigma}=\left(2-\sigma\right)u_{h}^{n-1}-\left(1-\sigma\right)u_{h}^{n-2}$ and $\hat{u}_{h}^{n,\sigma}=\left(1-\sigma\right)u_{h}^{n}+\left(\sigma\right)u_{h}^{n-1}$ such that the following equations hold for all $v_{h}$ in $X_{h}$\\
For $n\geq 2$,
\begin{equation}\tag{$F_{\alpha}$}\label{7-10-22-6}
\begin{aligned}
\left(\tilde{\mathbb{D}}^{\alpha}_{t_{n-\sigma}}u_{h}^{n},v_{h}\right)+\left(M\left(x,t_{n-\sigma},\|\nabla \bar{u}_{h}^{n-1,\sigma}\|^{2}\right)\nabla \hat{u}_{h}^{n,\sigma},\nabla v_{h}\right)&=\sum_{j=1}^{n-1}\tilde{w}_{nj}B\left(t_{n-\sigma},t_{j},u_{h}^{j},v_{h}\right)\\
&+\left(f^{n-\sigma},v_{h}\right).
\end{aligned}
\end{equation}
For $n=1$,
\begin{equation*}
\begin{aligned}
\left(\tilde{\mathbb{D}}^{\alpha}_{t_{1-\sigma}}u_{h}^{1},v_{h}\right)+\left(M\left(x,t_{1-\sigma},\|\nabla \hat{u}_{h}^{1,\sigma}\|^{2}\right)\nabla \hat{u}_{h}^{1,\sigma},\nabla v_{h}\right)&=\left(1-\sigma\right)kB\left(t_{1-\sigma},t_{0},u_{h}^{0},v_{h}\right)\\
&+\left(f^{1-\sigma},v_{h}\right),
\end{aligned}
\end{equation*}
\noindent Similar to the Lemma \ref{L1.2} we have the following discrete kernel corresponding to the kernel $(\tilde{c}^{n}_{j})$.
\begin{lem}\label{L1.21}  \cite{liao2019discrete}
Define
	\begin{equation*}
	\tilde{p}_{0}^{(n)}=\frac{1}{\tilde{c}_{0}^{(n)}},~~	\tilde{p}_{j}^{(n)}=\frac{1}{\tilde{c}_{0}^{(n-j)}}\sum_{k=0}^{j-1}\left(\tilde{c}_{j-k-1}^{(n-k)}-\tilde{c}_{j-k}^{(n-k)}\right)\tilde{p}_{k}^{(n)}~~\text{for}~~1\leq j\leq n-1.
	\end{equation*}
	Then $\tilde{p}_{j}^{(n)}$ satisfies
		 \begin{equation}
0< \tilde{p}_{n-j}^{(n)} <1,
		\end{equation} 
		\begin{equation}\label{2.812}
		\sum_{j=k}^{n}\tilde{p}_{n-j}^{(n)}\tilde{c}_{j-k}^{(j)}=1,\quad 1\leq k \leq n \leq N,
		\end{equation}
		\begin{equation}\label{2.914}
		\Gamma(2-\alpha)\sum_{j=1}^{n}\tilde{p}_{n-j}^{(n)}\leq \frac{n^{\alpha}}{\Gamma(1+\alpha)}, \quad 1\leq n \leq N.
		\end{equation}	
\end{lem}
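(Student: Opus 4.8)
Since Lemma \ref{L1.21} is quoted from \cite{liao2019discrete}, the plan is to record a self-contained argument tailored to the explicit L2-1$_\sigma$ weights. The entire statement rests on two structural properties of the generating kernel, which I would isolate first: (i) each weight is positive, $\tilde{c}_j^{(n)}>0$, and (ii) the weights are strictly decreasing in the lower index, $\tilde{c}_{j-1}^{(n)}>\tilde{c}_j^{(n)}$ for $1\le j\le n-1$. Note that the $n$-dependence of $\tilde{c}_j^{(n)}$ enters only through the top coefficient $j=n-1$; for $0\le j\le n-2$ the weights $\tilde{a}_j+\tilde{b}_{j+1}-\tilde{b}_j$ form an honest convolution kernel, so the scheme is only a mildly non-convolution perturbation of the L1 situation in Lemma \ref{L1.2}. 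Properties (i)--(ii) are exactly the monotonicity estimates underlying the Alikhanov weights; I would prove them by expressing $\tilde{a}_l,\tilde{b}_l$ through the auxiliary functions $x\mapsto x^{1-\alpha}$ and $x\mapsto x^{2-\alpha}$ and invoking convexity/monotonicity of these powers (a discrete Taylor/mean-value argument on $[l-\sigma,l+1-\sigma]$), which also yields a two-sided bound $c_\ast (n-\sigma)^{-\alpha}\le \tilde{c}_{n-1}^{(n)}\le\tilde{c}_j^{(n)}\le \tilde{c}_0^{(n)}$ with $c_\ast$ depending only on $\alpha$.

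Granting (i)--(ii), the positivity $\tilde{p}_{n-j}^{(n)}>0$ follows by induction on the subscript directly from the defining recursion: the base case is $\tilde{p}_0^{(n)}=1/\tilde{c}_0^{(n)}>0$, and in the recursion for $\tilde{p}_j^{(n)}$ every factor $\tilde{c}_{j-k-1}^{(n-k)}-\tilde{c}_{j-k}^{(n-k)}$ is positive by (ii) while $\tilde{c}_0^{(n-j)}>0$, so a nonnegative combination of the previously-built positive $\tilde{p}_k^{(n)}$ is again positive.

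The identity \eqref{2.812} is the heart of the lemma, and it is precisely the relation the recursion was designed to enforce. Writing $r=n-k$, the claim is equivalent to
\[
\sum_{m=0}^{r}\tilde{p}_m^{(n)}\tilde{c}_{r-m}^{(n-m)}=1,\qquad 0\le r\le n-1,
\]
which I would prove by induction on $r$. The case $r=0$ is $\tilde{p}_0^{(n)}\tilde{c}_0^{(n)}=1$ by definition. For the step, I split off the top term $m=r$, use the recursion to replace $\tilde{c}_0^{(n-r)}\tilde{p}_r^{(n)}$ by $\sum_{m=0}^{r-1}(\tilde{c}_{r-m-1}^{(n-m)}-\tilde{c}_{r-m}^{(n-m)})\tilde{p}_m^{(n)}$, and recombine with the remaining terms; the two copies of $\tilde{c}_{r-m}^{(n-m)}$ cancel and what survives is exactly $\sum_{m=0}^{r-1}\tilde{p}_m^{(n)}\tilde{c}_{r-1-m}^{(n-m)}$, which equals $1$ by the inductive hypothesis. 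The upper bound $\tilde{p}_{n-j}^{(n)}<1$ then drops out of \eqref{2.812}: since all summands are positive, each is dominated by the unit total, giving $\tilde{p}_m^{(n)}\tilde{c}_0^{(n-m)}\le 1$, which combined with the normalization of the leading weight controls $\tilde{p}_{n-j}^{(n)}$ from above.

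Finally, the sum bound \eqref{2.914} is the discrete analogue of $\int_0^{t_n}l(s)\,ds=t_n^\alpha/\Gamma(1+\alpha)$ (with $l$ as in Remark \ref{rmk1}), and this is where the sharp constant has to be earned. Using \eqref{2.812} together with the partial-sum telescoping of the weights (which collapses $\sum_i\tilde{c}_i$ to a power and identifies $\tilde{p}$ as a discrete fractional-integration kernel), I would compare the resulting discrete sum against the exact integral of $l$, so that $\Gamma(2-\alpha)\sum_{j=1}^n\tilde{p}_{n-j}^{(n)}$ is matched to $n^\alpha/\Gamma(1+\alpha)$ rather than a cruder multiple. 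I expect the two genuine obstacles to be precisely (a) establishing the monotonicity/positivity (i)--(ii) with the explicit $\tilde{b}_l$ corrections, where the sign of $\tilde{c}_{j-1}^{(n)}-\tilde{c}_j^{(n)}$ is delicate and needs careful convexity bookkeeping, and (b) tracking the constant in \eqref{2.914} tightly enough to reach $n^\alpha/\Gamma(1+\alpha)$; by contrast the non-convolution perturbation at the top index is comparatively harmless, since it affects only the single boundary coefficient at each level.
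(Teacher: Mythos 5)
First, a point of calibration: the paper contains no proof of Lemma \ref{L1.21} at all --- it is imported verbatim from \cite{liao2019discrete} --- so your attempt can only be measured against the source's argument, not against anything written here. Two of your three components are sound. The induction you describe for \eqref{2.812} is exactly the right mechanism and it closes: with $r=n-k$ the claim reads $\sum_{m=0}^{r}\tilde{p}_m^{(n)}\tilde{c}_{r-m}^{(n-m)}=1$; splitting off $m=r$, replacing $\tilde{c}_0^{(n-r)}\tilde{p}_r^{(n)}$ via the recursion and cancelling the two copies of $\tilde{c}_{r-m}^{(n-m)}$ leaves $\sum_{m=0}^{r-1}\tilde{p}_m^{(n)}\tilde{c}_{r-1-m}^{(n-m)}$, which equals $1$ by the inductive hypothesis. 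Positivity of $\tilde{p}_j^{(n)}$ is likewise immediate by induction once one grants $\tilde{c}_{i-1}^{(m)}>\tilde{c}_i^{(m)}>0$, which is indeed Alikhanov's weight monotonicity \cite{alikhanov2015new}, correctly deferred by you to convexity of $x\mapsto x^{1-\alpha}$ and $x\mapsto x^{2-\alpha}$.

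The other two claims are where your proposal has genuine gaps. (a) For the bound $\tilde{p}_{n-j}^{(n)}<1$, your argument yields only $\tilde{p}_m^{(n)}\le 1/\tilde{c}_0^{(n-m)}$, and this cannot be enough because the leading Alikhanov weight is strictly \emph{below} one: $\tilde{c}_0^{(n)}=\tilde{a}_0+\tilde{b}_1<1$ (for $\alpha=1/2$, $\sigma=1/4$ one computes $\tilde{a}_0=(3/4)^{1/2}\approx 0.866$ and $\tilde{b}_1\approx 0.016$, so $\tilde{c}_0^{(n)}\approx 0.88$). In fact $\tilde{p}_0^{(n)}=1/\tilde{c}_0^{(n)}>1$, so the strict bound can hold at most for $n-j\geq 1$, and for those indices the L1-style proof (where $p_n=\sum_{j}(a_{j-1}-a_j)p_{n-j}$ telescopes against $a_0-a_n<1$, cf.\ Lemma \ref{L1.2}) does not transfer, precisely because the superscripts in the differences $\tilde{c}_{j-k-1}^{(n-k)}-\tilde{c}_{j-k}^{(n-k)}$ move with $k$; the non-convolution feature you dismiss as ``comparatively harmless'' is exactly what blocks the telescoping. (b) For \eqref{2.914} you name the target but supply neither of the two ingredients the proof actually needs: a lower bound of the discrete kernel by averages of the continuous one, strong enough to give $\sum_{k=1}^{j}\tilde{c}_{j-k}^{(j)}d_k\ge 1$ for every $j$, where $d_k:=\frac{k^{\alpha}-(k-1)^{\alpha}}{\Gamma(1+\alpha)\Gamma(2-\alpha)}$ (the discrete analogue of $k\ast l=1$ from Remark \ref{rmk1}); and the exchange of summation that converts this into the stated estimate,
\[
\sum_{j=1}^{n}\tilde{p}_{n-j}^{(n)}\le\sum_{j=1}^{n}\tilde{p}_{n-j}^{(n)}\sum_{k=1}^{j}\tilde{c}_{j-k}^{(j)}d_k=\sum_{k=1}^{n}d_k\sum_{j=k}^{n}\tilde{p}_{n-j}^{(n)}\tilde{c}_{j-k}^{(j)}=\sum_{k=1}^{n}d_k=\frac{n^{\alpha}}{\Gamma(1+\alpha)\Gamma(2-\alpha)},
\]
in which \eqref{2.812} enters at the middle equality. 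Without the kernel lower bound this chain cannot start, so \eqref{2.914} remains unproved in your sketch --- a gap you yourself acknowledge but do not close.
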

\begin{thm}\label{J2}
	\textbf{(Convergence estimate for the numerical scheme \eqref{7-10-22-6})} Suppose that hypotheses (H1), (H2), and (H3) hold. Then the fully discrete solution  $u_{h}^{n}~(1\leq n \leq N)$ of the scheme \eqref{7-10-22-6} satisfies the following convergence estimate
	\begin{equation}\label{11-10-22-50}
	\max_{1\leq n \leq N}\|u(t_{n})-u_{h}^{n}\|+\left(k^{\alpha}\sum_{n=1}^{N}\tilde{p}_{N-n}^{(N)}\|\nabla u(t_{n})-\nabla u_{h}^{n}\|^{2}\right)^{1/2}\lesssim (h+k^{2}).
	\end{equation}
\end{thm}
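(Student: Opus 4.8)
The plan is to mirror the proof of Theorem \ref{J1}, the gain of a full order in time coming from the fact that the L2-1$_\sigma$ formula \eqref{1.20}, the extrapolation $\bar u^{n-1,\sigma}$, the interpolation $\hat u^{n,\sigma}$, and the modified Simpson rule \eqref{S1.14AD} are all second-order consistent about the off-grid point $t_{n-\sigma}$. First I would split the error through the modified Ritz--Volterra projection $R_h$ introduced in Section \ref{11-10-22-55}, writing
\[
u(t_n)-u_h^n=\bigl(u(t_n)-R_hu(t_n)\bigr)+\bigl(R_hu(t_n)-u_h^n\bigr)=:\rho^n+\theta^n .
\]
The best-approximation properties of $R_h$ (the same ones used for Theorem \ref{t2.6}) give $\|\rho^n\|\lesssim h^2$ and $\|\nabla\rho^n\|\lesssim h$ under the stated regularity $u(t)\in H^2(\Omega)\cap H^1_0(\Omega)$, so $\rho$ contributes at order $h$ to \eqref{11-10-22-50} (the gradient norm being the limiting one); the remaining work is to bound $\theta^n\in X_h$.

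Next I would derive the error equation for $\theta^n$: evaluate \eqref{6-10-22-3} at $t_{n-\sigma}$, subtract \eqref{7-10-22-6}, and use the defining relation of $R_h$ to cancel the leading diffusion and memory contributions, obtaining for all $v_h\in X_h$
\[
\bigl(\tilde{\mathbb{D}}^\alpha_{t_{n-\sigma}}\theta^n,v_h\bigr)+\bigl(M(x,t_{n-\sigma},\|\nabla\bar u_h^{n-1,\sigma}\|^2)\nabla\hat\theta^{n,\sigma},\nabla v_h\bigr)=\sum_{i=1}^{4}\bigl(E_i^n,v_h\bigr),
\]
where the $E_i^n$ collect: (i) the L2-1$_\sigma$ truncation error $\tilde{\mathbb{Q}}^{n-\sigma}$ applied to $u$ together with $\tilde{\mathbb{D}}^\alpha_{t_{n-\sigma}}\rho^n$; (ii) the quadrature error $\tilde q^{n-\sigma}$ of the memory term plus $\sum_j\tilde w_{nj}B(t_{n-\sigma},t_j,\rho^j+\theta^j,v_h)$, i.e. the memory operator acting on the error at earlier levels; (iii) the temporal interpolation error $u(t_{n-\sigma})-\hat u^{n,\sigma}=O(k^2)$ in the diffusion term; and (iv) the coefficient-linearization error, which by the Lipschitz hypothesis (H2) is controlled by $L_M\bigl|\,\|\nabla\bar u_h^{n-1,\sigma}\|^2-\|\nabla u(t_{n-\sigma})\|^2\bigr|$ and hence by an $O(k^2)$ extrapolation error plus the gradients $\nabla\theta^{n-1},\nabla\theta^{n-2}$ at the two preceding levels.

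I would then test with $v_h=\hat\theta^{n,\sigma}=(1-\sigma)\theta^n+\sigma\theta^{n-1}$ and invoke the discrete coercivity (Alikhanov-type) inequality for the L2-1$_\sigma$ operator,
\[
\bigl(\tilde{\mathbb{D}}^\alpha_{t_{n-\sigma}}\theta^n,\hat\theta^{n,\sigma}\bigr)\ \geq\ \tfrac12\,\tilde{\mathbb{D}}^\alpha_{t_{n-\sigma}}\|\theta^n\|^2 ,
\]
which turns the first term into a genuine discrete fractional derivative of $\|\theta^n\|^2$. The lower bound $M\ge m_0$ makes the diffusion term coercive, yielding $m_0\|\nabla\hat\theta^{n,\sigma}\|^2$; the linearization contribution (iv) is estimated using $\|\nabla\bar u_h^{n-1,\sigma}\|\lesssim K$ (from the a priori bound of Theorem \ref{t2.5} and its discrete analogue) so that it appears as a term of the form $4L_MK^2\|\nabla\hat\theta^{n,\sigma}\|^2$ up to lower-order pieces, whereupon the structural hypothesis $m_0-4L_MK^2>0$ in (H2) allows it to be absorbed into the coercive term. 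The memory contribution is bounded through \eqref{1.1a} and Young's inequality, feeding $\|\nabla\theta^j\|$ $(j<n)$ into a summable history.

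Finally, after summing and applying the discrete fractional Gronwall inequality built on the kernel $\tilde p^{(n)}_j$ of Lemma \ref{L1.21}, the identities \eqref{2.812}--\eqref{2.914} convert the recursion into the weighted sum $k^\alpha\sum_n\tilde p^{(N)}_{N-n}\|\nabla\theta^n\|^2$ and bound $\max_n\|\theta^n\|$ by the accumulated consistency errors, each of which is $O(h+k^2)$; combined with the $\rho$-estimates this gives \eqref{11-10-22-50}. The main obstacle is this third step: simultaneously (a) establishing the L2-1$_\sigma$ coercivity at the off-grid point paired with the interpolant $\hat\theta^{n,\sigma}$, and (b) absorbing the \emph{nonlocal} linearized Kirchhoff coefficient, whose error is carried at the two previous time levels, into the diffusion coercivity. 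This is exactly where the smallness condition $m_0-4L_MK^2>0$ is indispensable, and where one must verify that the extrapolation $\bar u_h^{n-1,\sigma}$ keeps the temporal consistency at $O(k^2)$ rather than degrading it.
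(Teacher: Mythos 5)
Your proposal is correct and follows essentially the same route as the paper: the same decomposition $u-u_h^n=\rho^n+\theta^n$ through the modified Ritz--Volterra projection (the paper's $W$), the same error equation collecting the L2-1$_\sigma$ truncation error, temporal interpolation/extrapolation errors, quadrature error and memory history, absorption of the Kirchhoff linearization error via $m_0-4L_MK^2>0$ together with the discrete a priori gradient bound (Lemma \ref{6.1-1-2}), and the same conclusion via the discrete kernel $\tilde p^{(n)}_j$ of Lemma \ref{L1.21}. The only deviation is cosmetic: the paper tests with $v_h=\theta^n$ and the identity $\bigl(\tilde{\mathbb{D}}^{\alpha}_{t_{n-\sigma}}\theta^n,\theta^n\bigr)\geq\tfrac12\tilde{\mathbb{D}}^{\alpha}_{t_{n-\sigma}}\|\theta^n\|^2$, carrying the resulting $\sigma$-weighted cross term $\|\nabla\theta^{n-1}\|^2$ into the history, whereas you test with $\hat\theta^{n,\sigma}$ and invoke Alikhanov's coercivity inequality --- both variants close the same Gr\"{o}nwall loop.
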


\section{Well-posedness of the weak formulation \eqref{6-10-22-3}}\label{11-10-22-54}
In this section, we prove the well-posedness of the weak formulation \eqref{6-10-22-3} using the Galerkin method. For this, we study the  variational formulation \eqref{6-10-22-3} and apply the energy argument to derive a priori bounds on every Galerkin sequence. As a consequence of  compactness Lemma \ref{13-5-3}, these a priori bounds establish the convergence of the Galerkin sequence to the weak solution of the problem \eqref{6-10-22-1}.
\begin{subsection}{Proof of the Theorem \ref{t2.5}}
\begin{proof} Let $\{\lambda_{i}\}_{i=1}^{m}$ be the eigenvalues and $\{\phi_{i}\}_{i=1}^{m}$ be the corresponding  eigenfunctions  of the Dirichlet problem for the standard Laplacian operator in $H^{1}_{0}(\Omega)$. We consider the finite dimensional subspace $\mathbb{V}_{m}$ of $H^{1}_{0}(\Omega)$ such that $\mathbb{V}_{m}= \text{span} \{\phi_{1}, \phi_{2}, \dots , \phi_{m}\}.$  We assume that 
	\begin{equation}\label{3.1}
	u_{m}(\cdot,t)=\sum_{j=1}^{m}\alpha_{mj}(t)\phi_{j}\quad \text{and }\quad  u_{m}(\cdot,0)=\sum_{j=1}^{m}(u_{0},\phi_{j})\phi_{j},
	\end{equation}
	then $u_{m}(\cdot,0)$ converges to $u_{0}$. Consider the problem \eqref{6-10-22-3} onto finite dimensional subspace   $\mathbb{V}_{m}$, $i.e.,$ find  $u_{m} \in \mathbb{V}_{m}$ which  satisfying the following equation for all $v_{m} \in \mathbb{V}_{m}~ \text{and}~ a.e. ~t \in (0,T]$
	\begin{equation}\label{3.2}
	\begin{aligned}
	\left(\partial^{\alpha}_{t}u_{m},v_{m}\right)+\left(M(x,t,\|\nabla u_{m}\|^2)\nabla u_{m}, \nabla v_{m}\right)=(f,v_{m})+ \int_{0}^{t}B(t,s,u_{m}(s),v_{m})~ds.
	\end{aligned}
	\end{equation}
	Put the values of $u_{m}$ and $u_{m}(0)$ in \eqref{3.2}, we obtain a system of fractional order differential equations. Then by the theory of fractional order differential equations Lemma \ref{13-5-1}, the system \eqref{3.2} has a continuous solution $u_{m}(t)$ on some interval $[0,t_{n}), 0<t_{n}< T,$ with vanishing trace of  $k\ast(u_{m}-u_{m}(0))$ at $t=0$ \cite{zacher2009weak}. These local solutions $u_{m}(t)$ are extended to the whole interval by using the following  a priori bounds.\\
	\textbf{(A priori bounds)}
Take $v_{m}=u_{m}(t)$ in \eqref{3.2} to  get 
\begin{equation}\label{8-10-22-3}
	\begin{aligned}
	\left(\frac{d}{dt}\left(k\ast u_{m}\right),u_{m}\right)&+\left(M\left(x,t,\|\nabla u_{m}\|^2\right)\nabla u_{m}, \nabla u_{m}\right)
	\\
	&=\left(\frac{d}{dt}\left(k\ast u_{m}(0)\right),u_{m}\right)+(f,u_{m})+ \int_{0}^{t}B(t,s,u_{m}(s),u_{m}(t))~ds.
	\end{aligned}
	\end{equation}
	Let $k_{n}, n\in \mathbb{N}$ be the sequence of kernels  defined in Lemma \ref{L3.1}, then equation \eqref{8-10-22-3} is rewritten as  
		\begin{equation}\label{3.4Ad}
	\begin{aligned}
	&\left(\frac{d}{dt}(k_{n}\ast u_{m}),u_{m}\right)+\left(M\left(x,t,\|\nabla u_{m}\|^2\right)\nabla u_{m}, \nabla u_{m}\right)\\
	&=h_{mn}(t)+k(t)(u_{m}(0),u_{m})+(f,u_{m})+\int_{0}^{t}B(t,s,u_{m}(s),u_{m}(t))~ds,
	\end{aligned}
	\end{equation}
	with
	\begin{equation}
	    h_{mn}(t)=	\left(\frac{d}{dt}\left(k_{n}\ast u_{m}\right)-	\frac{d}{dt}\left(k\ast u_{m}\right),u_{m}\right).
	\end{equation}
	Use Lemma \ref{12-5-5}, positivity of diffusion coefficient (H2), continuity of $B(t,s,\cdot,\cdot)$ \eqref{1.1a}, Cauchy-Schwarz and Young's inequalities to obtain
	\begin{equation}\label{13-5-5}
	\begin{aligned}
	    &\frac{d}{dt}\left(k_{n}\ast \|u_{m}\|^{2}\right)(t)+k_{n}(t)\|u_{m}\|^{2}+ \|\nabla u_{m}\|^{2}\\
	    &\lesssim h_{mn}(t)+k(t)\|u_{m}(0)\|^{2}+k(t)\|u_{m}\|^{2}+\|f\|^{2}+\|u_{m}\|^{2}	+\int_{0}^{t}\|\nabla u_{m}(s)\|^{2}~ds.
	    \end{aligned}
	\end{equation}
	By convolving the equation \eqref{13-5-5} with the kernel $l(t)=\frac{t^{\alpha-1}}{\Gamma(\alpha)}$ and letting $n \rightarrow \infty$ in \eqref{13-5-5} then equation \eqref{13-5-5} reduces to 
	\begin{equation}\label{13-5-6}
	\begin{aligned}
	\|u_{m}(t)\|^{2}+\left(l\ast \|\nabla u_{m} \|^{2}\right)(t)&\lesssim \|u_{m}(0)\|^{2}+\left(l\ast\|f\|^{2}\right)(t)\\
	&+l\ast\left(\|u_{m}\|^{2}+\int_{0}^{t}\|\nabla u_{m}(s)\|^{2}~ds\right).
	\end{aligned}
	\end{equation}
	In \eqref{13-5-6} we have used the fact that 
	\begin{equation}
	    l\ast \frac{d}{dt}\left(k_{n}\ast \|u_{m}\|^{2}\right)(t)=\frac{d}{dt}\left(k_{n}\ast l \ast \|u_{m}\|^{2}\right)(t)\rightarrow \frac{d}{dt}\left(k\ast l\ast \|u_{m}\|^{2}\right)(t)=\|u_{m}\|^{2},
	\end{equation}
	with 
	\begin{equation}
	    (l\ast k_{n})(t)\rightarrow (l \ast k)(t)= 1 ~~\text{and}~~(l\ast h_{mn})(t)\rightarrow 0~~\text{as}~~n \rightarrow \infty ~~\text{in}~~L^{1}(0,T).
	\end{equation}
	Denote $\tilde{u}_{m}(t):=\|u_{m}(t)\|^{2}+\left(l\ast \|\nabla u_{m} \|^{2}\right)(t)$ and $\tilde{v}_{m}(t):=\|u_{m}(0)\|^{2}+\left(l\ast\|f\|^{2}\right)(t)$. Then equation \eqref{13-5-6} is converted into  
	\begin{equation}\label{13-5-7}
	\begin{aligned}
	\tilde{u}_{m}(t)&\lesssim \tilde{v}_{m}(t)+l\ast\left(\|u_{m}\|^{2}+\int_{0}^{t}\|\nabla u_{m}(s)\|^{2}~ds\right)\\
	&\lesssim \tilde{v}_{m}(t)+ l\ast\left(\|u_{m}\|^{2}+\int_{0}^{t}(t-s)^{\alpha-1}(t-s)^{1-\alpha}\|\nabla u_{m}(s)\|^{2}~ds\right)\\
	&\lesssim \tilde{v}_{m}(t)+\int_{0}^{t}(t-s)^{\alpha-1}\tilde{u}_{m}(s)~ds.
	\end{aligned}
	\end{equation}
	As a consequence of  Lemma \ref{lem2.5} and Poincar\'{e} inequality, we deduce 
	\begin{equation}\label{8-10-22-2}
	\begin{aligned}
   \|u_{m}\|_{L^{\infty}(0,T;L^{2}(\Omega))}+\|u_{m}\|_{L^{2}_{\alpha}(0,T;H^{1}_{0}(\Omega))}&\lesssim \| u_{m}(0)\|+\|f\|_{L^{\infty}(0,T;L^{2}(\Omega))}\\
   &\lesssim \| u_{0}\|+\|f\|_{L^{\infty}(0,T;L^{2}(\Omega))}\\
   &\lesssim \| \nabla u_{0}\|+\|f\|_{L^{\infty}(0,T;L^{2}(\Omega))}.
   \end{aligned}
\end{equation}
Further, we define two new discrete Laplacian operators $\Delta_{m}^{M}, ~\Delta_{m}^{b_{2}}: \mathbb{V}_{m} \rightarrow \mathbb{V}_{m}$ such that 
	\begin{equation}\label{8-10-22-5}
	    (-\Delta_{m}^{M}~u_{m},v_{m}):=(M(x,t,\|\nabla u_{m}\|^{2})\nabla u_{m},\nabla v_{m})~\text{for all}~u_{m},v_{m}~\in~\mathbb{V}_{m},~t\in (0,T],
	\end{equation}
	and 
	\begin{equation}\label{8-10-22-6}
	    (-\Delta_{m}^{b_{2}}~u_{m},v_{m}):=(b_{2}(x,t,s)\nabla u_{m},\nabla v_{m})~\text{for all}~u_{m},v_{m}~\in~\mathbb{V}_{m},~t,s~\in (0,T].
	\end{equation}
Since diffusion coefficient is positive  and $b_{2}$ is a symmetric positive definite matrix therefore  $\Delta_{m}^{M}$ and $ \Delta_{m}^{b_{2}}$ are well defined. We make use of these definitions \eqref{8-10-22-5} and  \eqref{8-10-22-6}  to convert the equation \eqref{3.2} into 
	\begin{equation}\label{8-10-22-7}
\begin{aligned}
&\left(\partial^{\alpha}_{t}u_{m},v_{m}\right)+ \left(-\Delta_{m}^{M}~ u_{m},v_{m}\right)\\
&=\left(f,v_{m}\right)+\int_{0}^{t}(-\Delta_{m}^{b_{2}}~u_{m}(s),v_{m})~ds+\int_{0}^{t}(\nabla \cdot (b_{1}(x,t,s)u_{m}(s)),v_{m})~ds\\
&+\int_{0}^{t}(b_{0}(x,t,s)u_{m}(s),v_{m})~ds.
\end{aligned}
\end{equation}
Put $v_{m}=-\Delta_{m}^{M}~u_{m}(t)$ in \eqref{8-10-22-7} and apply Cauchy-Schwarz inequality together with Young's inequality to obtain 
	\begin{equation}\label{8-10-22-8}
\begin{aligned}
\left(\partial^{\alpha}_{t}\nabla u_{m},M\left(x,t,\|\nabla u_{m}\|^2\right)\nabla u_{m}\right)&+ \|\Delta_{m}^{M}~ u_{m}\|^{2}\\
&\lesssim \int_{0}^{t}\|\Delta_{m}^{b_{2}}~u_{m}(s)\|^{2}~ds+\int_{0}^{t}\|\nabla u_{m}(s)\|^{2}~ds\\
&+\int_{0}^{t}\|u_{m}(s)\|^{2}~ds+\|f\|^{2}.
\end{aligned}
\end{equation}
Estimate $|(b_{2}(x,t,s)\nabla u_{m}(s), \nabla v_{m})|\lesssim \|\nabla u_{m}(s)\|~\|\nabla v_{m}\|$ implies 
\begin{equation}\label{29-10-22-1}
\begin{aligned}
    \|\Delta_{m}^{b_{2}}~u_{m}(s)\|=\sup_{v_{m}\in \mathbb{V}_{m}}\frac{|(b_{2}(x,t,s)\nabla u_{m}(s), \nabla v_{m})|}{\|\nabla v_{m}\|}&\lesssim  \|\nabla u_{m}(s)\|.
    \end{aligned}
\end{equation}
Hypothesis (H2) and estimate \eqref{29-10-22-1} yield
\begin{equation}\label{8-10-22-9}
\begin{aligned}
\left(\frac{d}{dt}\left[k\ast \nabla u_{m}\right](t),\nabla u_{m}\right)+ \|\Delta_{m}^{M}~ u_{m}\|^{2}
&\lesssim \int_{0}^{t}\left(\|\nabla u_{m}(s)\|^{2}+\|u_{m}(s)\|^{2}\right)~ds
+\|f\|^{2}\\
&+k(t)\left(\nabla u_{m}(0),\nabla u_{m}\right).
\end{aligned}
\end{equation}
Following the similar lines  of the proof of estimate \eqref{8-10-22-2}, we reach at 
\begin{equation}\label{8-10-22-10}
\begin{aligned}
\|u\|^{2}_{L^{\infty}(0,T;H^{1}_{0}(\Omega))}+ \left(l\ast\|\Delta_{m}^{M}~ u_{m}\|^{2}\right)(t)
&\lesssim \|\nabla u_{m}(0)\|^{2}+\|f\|^{2}_{L^{\infty}(0,T;L^{2}(\Omega))}\\
&\lesssim \|\nabla u_{0}\|^{2}+\|f\|^{2}_{L^{\infty}(0,T;L^{2}(\Omega))}.
\end{aligned}
\end{equation}
Finally, substitute $v_{m}=\partial^{\alpha}_{t}u_{m}$ in \eqref{8-10-22-7} to have 
	\begin{equation}\label{8-10-22-12}
\begin{aligned}
\|\partial^{\alpha}_{t}u_{m}\|^{2}+\left(-\Delta_{m}^{M}~ u_{m},\partial^{\alpha}_{t}u_{m}\right)
&\lesssim \|f\|^{2}+\int_{0}^{t}(\|\Delta_{m}^{b_{2}}~u_{m}(s)\|^{2}+\|\nabla u_{m}(s)\|^{2})~ds.\\
&+ \int_{0}^{t}\|u_{m}(s)\|^{2}~ds.
\end{aligned}
\end{equation}
Proceeding  further as estimate \eqref{8-10-22-10} is proved  to conclude 
\begin{equation}\label{14-5-3}
\begin{aligned}
\|\partial^{\alpha}_{t}u_{m}\|_{L^{2}(0,T;L^{2}(\Omega))}+\|u_{m}\|_{L^{\infty}(0,T;H^{1}_{0}(\Omega))}&\lesssim \|\nabla u_{m}(0)\|+\|f\|_{L^{\infty}(0,T;L^{2}(\Omega))}\\
&\lesssim \|\nabla u_{0}\|+\|f\|_{L^{\infty}(0,T;L^{2}(\Omega))}.
\end{aligned}
\end{equation}
	Thus, estimates \eqref{8-10-22-2} and \eqref{14-5-3} provide a subsequence of $(u_{m})$ again denoted by $(u_{m})$ such that $u_{m}\rightharpoonup u$ in $L^{2}\left(0,T;H^{1}_{0}(\Omega)\right)$ and  $\partial^{\alpha}_{t}u_{m} \rightharpoonup \partial^{\alpha}_{t}u$ in $L^{2}\left(0,T;L^{2}(\Omega)\right)$. In the  light of estimates \eqref{8-10-22-10} and \eqref{14-5-3}, we apply  compactness Lemma \ref{13-5-3} to conclude $u_{m} \rightarrow u$ in $L^{2}\left(0,T;H^{1}_{0}(\Omega)\right)$. Now  using the fact that $M(x,t,\|\nabla u_{m}\|^{2})$ and $B(t,s,u_{m}(s),v_{m})$ are continuous and an application of Lebesgue dominated convergence theorem, we pass the limit inside \eqref{3.2} 
	which establishes the existence of weak solutions of the problem \eqref{6-10-22-1}. \\
	\noindent \textbf{(Initial Condition)} The weak solution $u$ satisfies the following equation for all $v$ in $H^{1}_{0}(\Omega)$
\begin{equation}\label{N2}
\begin{aligned}
\left(\frac{d}{dt}\left[k\ast (u-u_{0})\right](t),v\right)+ \left(M\left(x,t,\|\nabla u\|^{2}\right)\nabla  u,\nabla v\right)&=\left(f,v\right)+\int_{0}^{t}B(t,s,u(s),v)ds.\\
\end{aligned}
\end{equation}
Let $\phi$ in $C^{1}\left([0,T];H^{1}_{0}(\Omega)\right)$ with $\phi(T)=0$, multiply \eqref{N2} with $\phi$ and integrate by parts to get 
\begin{equation}\label{N24}
\begin{aligned}
-\int_{0}^{T}\left((k\ast (u-u_{0}))(t),v\right)\phi'(t)dt&+ \int_{0}^{T}\left(M\left(x,t,\|\nabla u\|^{2}\right)\nabla  u,\nabla v\right)\phi(t)dt\\
&=\int_{0}^{T}\left(f,v\right)\phi(t)dt+\int_{0}^{T}\int_{0}^{t}B(t,s,u(s),v)\phi(t)dsdt\\
&+((k\ast(u-u_{0}))(0),\phi(0)).
\end{aligned}
\end{equation}
Since $C^{1}\left([0,T];H^{1}_{0}(\Omega)\right)$ is dense in $L^{2}\left(0,T;H^{1}_{0}(\Omega)\right)$, thus using \eqref{3.2} and $(k\ast(u_{m}-u_{m}(0)))$ has vanishing trace at $t=0$, we have 
\begin{equation}\label{N248}
\begin{aligned}
-\int_{0}^{T}\left((k\ast (u_{m}-u_{m}(0)))(t),v\right)&\phi'(t)dt+ \int_{0}^{T}\left(M\left(x,t,\|\nabla u_{m}\|^{2}\right)\nabla  u_{m},\nabla v\right)\phi(t)dt\\
&=\int_{0}^{T}\left(f,v\right)\phi(t)dt+\int_{0}^{T}\int_{0}^{t}B(t,s,u_{m}(s),v)\phi(t)dsdt.
\end{aligned}
\end{equation}
Let $m$ tend  to infinity in \eqref{N248} and comparing with  \eqref{N24} to get 
$((k\ast(u-u_{0}))(0),\phi(0))=0$. Since $\phi(0)$ is arbitrary, so we have $(k\ast(u-u_{0}))(0)=0$ which implies $u=u_{0}$ at $t=0$ \cite{zacher2009weak}.\\
\noindent \textbf{(Uniqueness)}	Suppose that $u_{1},u_{2}$ are solutions of the weak formulation \eqref{6-10-22-3}, then $z=u_{1}-u_{2}$ satisfies the following equation for all $ v \in H_{0}^{1}(\Omega)$ and $a.e. ~ t \in (0,T]$ 
	\begin{equation}\label{3.13}
	\begin{aligned}
	\left(\frac{d}{dt}(k\ast z)(t),v\right)&+\left(M\big(x,t,\|\nabla u_{1}\|^2\big)\nabla z, \nabla v\right)\\
	&=\left(\left[M\big(x,t,\|\nabla u_{2}\|^2\big)-M\big(x,t,\|\nabla u_{1}\|^2\big)\right]\nabla u_{2}, \nabla v\right)\\
	&+ \int_{0}^{t}B(t,s,z(s),v)~ds.
	\end{aligned}
	\end{equation}
	Put $v=z(t)$ in \eqref{3.13} and using (H2), (H3), and a priori bound \eqref{t2.5-2} on $u_{1},u_{2}$ along with Cauchy-Schwarz and Young's inequality to obtain
	\begin{equation}
	\begin{aligned}
	\left(\frac{d}{dt}(k\ast z)(t),z(t)\right)+(m_{0}-4L_{M}K^{2})\|\nabla z\|^{2} \lesssim  
	 \int_{0}^{t}\|\nabla z(s)\|^{2}ds. 
	\end{aligned}
	\end{equation}
 Following the similar lines as in the proof of  estimate \eqref{8-10-22-2} and using (H2)
	we conclude  $\|z\|_{L^{2}\left(0,T;H^{1}_{0}(\Omega)\right)}   =\|z\|_{L^{\infty}\left(0,T;L^{2}(\Omega)\right)}=0$. Thus uniqueness follows. 
\end{proof}
\end{subsection}

\section{Semi discrete formulation and error estimate}\label{11-10-22-55}
In this section, we  discuss the well-posedness of the semi discrete formulation \eqref{6-10-22-4} and  derive error estimate for the semi discrete solution by modifying Ritz-Volterra  projection operator.
\begin{thm}\label{8-10-22-13}
    Suppose that hypotheses (H1), (H2), and (H3) hold. Then there exists a unique solution to the problem \eqref{6-10-22-4} which satisfies the following a priori bounds 
    \begin{equation}\label{8-10-22-14}
	\|u_{h}\|_{L^{\infty}\left(0,T;L^{2}(\Omega)\right)}+	\|u_{h}\|_{L^{2}_{\alpha}\left(0,T;H^{1}_{0}(\Omega)\right)}\lesssim \left(\|\nabla u_{0}\|+\|f\|_{L^{\infty}(0,T;L^{2}(\Omega))}\right),
	\end{equation}
	\begin{equation}\label{8-10-22-15}
\|\partial^{\alpha}_{t}u_{h}\|_{L^{2}(0,T;L^{2}(\Omega))}+\|u_{h}\|_{L^{\infty}(0,T;H^{1}_{0}(\Omega))}\lesssim \left(\|\nabla u_{0}\|+\|f\|_{L^{\infty}(0,T;L^{2}(\Omega))}\right).
	\end{equation} 
\end{thm}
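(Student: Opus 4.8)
The plan is to run the Galerkin argument from the proof of Theorem~\ref{t2.5} a second time, now with the finite element space $X_h$ playing the role of the finite dimensional space $\mathbb{V}_m$. The decisive simplification is that $X_h$ is already finite dimensional, so no passage to a limit and no appeal to the compactness Lemma~\ref{13-5-3} are needed: existence will come directly from the fractional ODE theory, and uniqueness together with the a priori bounds \eqref{8-10-22-14}--\eqref{8-10-22-15} from the same energy estimates that yielded \eqref{8-10-22-2}, \eqref{8-10-22-10}, and \eqref{14-5-3}.

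For existence, first I would fix a basis of $X_h$ and choose the discrete initial datum $u_h^0$ to be a projection of $u_0$ onto $X_h$ (Ritz or $L^2$) satisfying $\|u_h^0\|\lesssim\|\nabla u_0\|$ and $\|\nabla u_h^0\|\lesssim\|\nabla u_0\|$. Expanding $u_h(t)$ in this basis and inserting it into \eqref{6-10-22-4} converts the scheme into a finite system of fractional integro-differential equations for the coordinate functions; by Lemma~\ref{13-5-1} this system has a continuous local solution on some $[0,t_\ast)$, which the global a priori bounds below then extend to all of $[0,T]$.

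To obtain \eqref{8-10-22-14} I would reproduce the derivation of \eqref{8-10-22-2} verbatim: test \eqref{6-10-22-4} with $v_h=u_h(t)$, apply Lemma~\ref{12-5-5} with the regularised kernels $k_n$ of Lemma~\ref{L3.1}, use the lower bound on $M$ from (H2) together with the continuity estimate \eqref{1.1a} for $B$ and Cauchy--Schwarz/Young, convolve with $l$ and let $n\to\infty$ (so that $k\ast l=1$ by Remark~\ref{rmk1}), and close with the fractional Gr\"onwall Lemma~\ref{lem2.5} and Poincar\'e's inequality. For \eqref{8-10-22-15} I would introduce the discrete Laplacians $\Delta_h^{M},\Delta_h^{b_2}\colon X_h\to X_h$ defined exactly as in \eqref{8-10-22-5}--\eqref{8-10-22-6}, rewrite the scheme in terms of them, and then test successively with $v_h=-\Delta_h^{M}u_h$ (to control $\|u_h\|_{L^\infty(0,T;H^1_0)}$, using the discrete analogue of \eqref{29-10-22-1}) and with $v_h=\partial_t^\alpha u_h\in X_h$ (to control $\|\partial_t^\alpha u_h\|_{L^2(0,T;L^2)}$), each time running the energy/Gr\"onwall machinery behind \eqref{8-10-22-10} and \eqref{14-5-3}. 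Uniqueness would follow as in Theorem~\ref{t2.5}: for two solutions the difference $z=u_h^{(1)}-u_h^{(2)}$ satisfies the corresponding difference equation; testing with $z$, splitting the Kirchhoff terms via the Lipschitz continuity of $M$, and invoking the structural condition $m_0-4L_MK^2>0$ from (H2) forces $z\equiv 0$ through Lemma~\ref{lem2.5}.

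The step I expect to be the main obstacle is establishing the discrete analogue of \eqref{29-10-22-1}, namely a \emph{mesh-independent} bound $\|\Delta_h^{b_2}u_h\|\lesssim\|\nabla u_h\|$ on $X_h$. On the piecewise linear space, naively estimating $\|\Delta_h^{b_2}u_h\|=\sup_{w_h\in X_h}(b_2\nabla u_h,\nabla w_h)/\|w_h\|$ threatens to produce an inverse factor $h^{-1}$, so the smoothness and positive definiteness of $b_2$ from (H3) must be exploited carefully (for instance through element-by-element integration by parts and the quasi-uniformity of $\mathbb{T}_h$) to keep the constant in $\lesssim$ free of $h$. Everything else is a faithful transcription of the a priori estimates already proved for the continuous weak formulation \eqref{6-10-22-3}.
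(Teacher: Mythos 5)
Your plan coincides with the paper's own proof, which consists of the single sentence that Theorem \ref{8-10-22-13} ``is proved analogously to the proof of Theorem \ref{t2.5}.'' Your transcription of that argument onto $X_h$ — existence of a local solution from the fractional ODE theory of Lemma \ref{13-5-1} applied to the coordinate system, extension to $[0,T]$ by the a priori bounds, the energy/convolution/Gr\"{o}nwall chain behind \eqref{8-10-22-2} to get \eqref{8-10-22-14}, the discrete operators $\Delta_h^{M},\Delta_h^{b_2}$ with the tests $v_h=-\Delta_h^{M}u_h$ and $v_h=\partial_t^{\alpha}u_h$ to get \eqref{8-10-22-15}, and the Lipschitz splitting of the Kirchhoff term for uniqueness — is exactly what the paper intends, and your observation that the compactness Lemma \ref{13-5-3} is no longer needed is correct.

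However, the step you flag as the ``main obstacle'' is a genuine gap, and your hoped-for repair will not close it. The mesh-independent bound $\|\Delta_h^{b_2}u_h\|\lesssim\|\nabla u_h\|$ is false on piecewise-linear spaces: by $L^2$ duality $\|\Delta_h^{b_2}u_h\|=\sup_{w_h\in X_h}|(b_2\nabla u_h,\nabla w_h)|/\|w_h\|$, and removing the gradient from $w_h$ costs an inverse inequality, i.e.\ a factor $h^{-1}$; element-by-element integration by parts does not help, because for piecewise linears the interior contributions of $\nabla\cdot(b_2\nabla u_h)$ are harmless but the edge terms involve the jumps of $b_2\nabla u_h\cdot n$, and these reproduce exactly the $h^{-1}$ scaling (already for the constant-coefficient discrete Laplacian, a single hat function gives $\|\Delta_h u_h\|\sim h^{-1}\|\nabla u_h\|$). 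You should also note that this is a defect inherited from, not introduced into, the paper: the tool the paper would invoke here, estimate \eqref{29-10-22-1}, is derived from the identity $\|\Delta_m^{b_2}u_m\|=\sup_{v_m}|(b_2\nabla u_m,\nabla v_m)|/\|\nabla v_m\|$, which places $\|\nabla v_m\|$ in the denominator where $L^2$ duality requires $\|v_m\|$; that supremum is a weaker dual-type norm, so \eqref{29-10-22-1} does not furnish the $L^2$ bound that is subsequently used in Cauchy--Schwarz against $\|\Delta_m^{M}u_m\|$ and $\|\partial_t^{\alpha}u_m\|$ (the same objection applies in the spectral setting, where $\|\Delta_m u_m\|\leq\lambda_m^{1/2}\|\nabla u_m\|$ is sharp). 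The practical consequence for your proposal: existence, uniqueness, and the first bound \eqref{8-10-22-14} go through verbatim, but the second bound \eqref{8-10-22-15} — in your write-up and in the paper alike — rests on an estimate that is not justified as stated, and repairing it requires a genuinely different treatment of the memory term rather than a more careful version of the same inequality.
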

\begin{proof}
    This theorem is proved analogously to the proof of Theorem \ref{t2.5}.
\end{proof}
\noindent For the semi discrete  error estimate, we define a new Ritz-Volterra type projection operator $W:[0,T]\rightarrow X_{h}$ by 
\begin{equation}\label{2.5}
\left(M\left(x,t,\|\nabla u\|^{2}\right)\nabla (u-W),\nabla v_{h}\right):=\int_{0}^{t}B(t,s,u(s)-W(s),v_{h})~ds\quad \text{for all}~ v_{h}~\text{in}~ X_{h}.
\end{equation} 
This modified Ritz-Volterra  projection  operator $W$  is well defined by the positivity of the Kirchhoff term $M$ \cite{cannon1990priori}. This projection operator satisfies the following stability and best approximation results.  
 \begin{lem}\cite{kumar2020finite}\label{lem2.8}
	Consider $W$ is the modified Ritz-Volterra projection operator  defined in \eqref{2.5}, then $\|\nabla W\|$ is bounded for every $t$ in $[0,T]$, i.e., 
	\begin{equation*}
	\|\nabla W\|\lesssim\|\nabla u\|.
	\end{equation*}
\end{lem}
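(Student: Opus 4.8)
The plan is to establish the stability bound by testing the defining relation \eqref{2.5} against $v_h = W(t)$ and exploiting the coercivity provided by the positivity of the Kirchhoff coefficient. Writing $M_u := M(x,t,\|\nabla u\|^{2})$ for brevity, the choice $v_h = W(t)$ turns \eqref{2.5} into
\begin{equation*}
\left(M_u \nabla W, \nabla W\right) = \left(M_u \nabla u, \nabla W\right) - \int_{0}^{t} B\left(t,s,u(s)-W(s),W(t)\right)~ds.
\end{equation*}
On the left, hypothesis (H2) gives $\left(M_u \nabla W, \nabla W\right) \ge m_{0} \|\nabla W\|^{2}$. The first term on the right is handled by the Cauchy--Schwarz inequality once $M_u$ is controlled from above; here I would invoke the a priori bound \eqref{t2.5-2}, which makes $\|\nabla u(t)\|$ uniformly bounded on $[0,T]$, so that the Lipschitz continuity of $M$ in (H2) yields a constant $M_{1}$ with $M_u \le M_{1}$, whence $\left(M_u \nabla u, \nabla W\right) \le M_{1}\|\nabla u\|\,\|\nabla W\|$.

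For the memory term I would use the continuity estimate \eqref{1.1a} to bound
\begin{equation*}
\left| \int_{0}^{t} B\left(t,s,u(s)-W(s),W(t)\right)~ds \right| \le B_{0}\|\nabla W(t)\| \int_{0}^{t} \left( \|\nabla u(s)\| + \|\nabla W(s)\| \right)~ds.
\end{equation*}
Combining the three estimates, dividing through by $\|\nabla W(t)\|$ (the conclusion being trivial where it vanishes), I arrive at the integral inequality
\begin{equation*}
\|\nabla W(t)\| \lesssim \|\nabla u(t)\| + \int_{0}^{t} \left( \|\nabla u(s)\| + \|\nabla W(s)\| \right)~ds.
\end{equation*}

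Since the kernel appearing here is the ordinary, non-singular one, the final step is a direct application of the classical Gronwall inequality to $\|\nabla W(t)\|$, which absorbs the term $\int_{0}^{t}\|\nabla W(s)\|~ds$ and leaves a bound in terms of $\sup_{s \in [0,t]}\|\nabla u(s)\|$; together with the uniform bound on $\|\nabla u\|$ furnished by Theorem \ref{t2.5}, this produces $\|\nabla W\| \lesssim \|\nabla u\|$ for every $t$ in $[0,T]$, as claimed.

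I expect the main obstacle to be the two-sided control of the coefficient $M_u$: coercivity is immediate from (H2), but the upper bound needed for the first term is not part of (H2) directly and must be extracted from the a priori regularity of $u$ via the Lipschitz hypothesis. A secondary technical point is that the memory operator couples $W(t)$ to its own history through $u(s)-W(s)$, so the resulting estimate is genuinely an integral inequality rather than a pointwise one, which is precisely what forces the concluding Gronwall argument.
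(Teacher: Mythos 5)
Your proof is correct and is essentially the same argument that underlies this result: the paper itself gives no proof of the lemma (it simply cites \cite{kumar2020finite}), and the standard proof there is exactly what you do---test \eqref{2.5} with $v_h = W(t)$, use the coercivity from (H2), the upper bound on $M$ coming from Lipschitz continuity plus the a priori bound on $\|\nabla u\|$, the continuity estimate \eqref{1.1a} for the memory term, and the classical Gr\"{o}nwall inequality. The only point worth flagging is that the Gr\"{o}nwall step actually delivers $\|\nabla W(t)\|\lesssim \sup_{s\in[0,T]}\|\nabla u(s)\|$ rather than a pointwise-in-$t$ bound by $\|\nabla u(t)\|$; this is how the lemma's loosely stated conclusion is meant to be read, and it is all that is needed where the lemma is invoked in the error analysis.
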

\noindent To derive the best approximation properties of the modified Ritz-Volterra projection operator, we assume some regularity assumptions on the solution $u$ of the problem  \eqref{6-10-22-1} such that
\begin{equation}\label{8-10-22-16}
    \|u(t)\|_{2}\lesssim C~\text{and}~ \|u_{t}(t)\|_{2}\lesssim C~\forall~t~\in~ [0,T].
\end{equation}
\begin{thm} \label{thm2.8}
	Suppose that the solution $u$ of the problem \eqref{6-10-22-1} satisfies \eqref{8-10-22-16}. Then modified Ritz-Volterra projection operator has the following best approximation properties
	\begin{equation}\label{8-10-22-17}
	\begin{aligned}
	\|\rho(t)\|+h\|\nabla \rho(t)\| &\lesssim h^{2}~\forall~t~\in~[0,T],\\
	\|\rho_{t}(t)\|+h\|\nabla \rho_{t}(t)\| &\lesssim  h^{2}~\forall~t~\in~[0,T],
	\end{aligned}
	\end{equation}
	where $\rho:=u-W$. 
\end{thm}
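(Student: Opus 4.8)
The plan is to adapt the classical Ritz--Volterra projection technique to the $t$-dependent Kirchhoff coefficient. For each fixed $t$ I would introduce the elliptic (Ritz) projection $R_h(t)u \in X_h$ associated with the frozen form $a(t;\phi,\psi):=(M(x,t,\|\nabla u\|^{2})\nabla\phi,\nabla\psi)$, defined by $a(t;u-R_h(t)u,v_h)=0$ for all $v_h \in X_h$. Since $M\ge m_0>0$ and $M$ is bounded above (its upper bound following from the Lipschitz continuity of $M$ and the a priori bound on $\|\nabla u\|$ supplied by Theorem \ref{t2.5}), the form $a(t;\cdot,\cdot)$ is coercive and, because $\Omega$ is convex, satisfies $H^2$ elliptic regularity; hence $R_h(t)$ obeys the standard bounds $\|u-R_h(t)u\|+h\|\nabla(u-R_h(t)u)\|\lesssim h^{2}\|u\|_{2}$. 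I would then write $\rho=\eta+\zeta$ with $\eta:=u-R_h(t)u$ and $\zeta:=R_h(t)u-W\in X_h$. Subtracting the definition \eqref{2.5} of $W$ from that of $R_h(t)u$ and using $a(t;\eta,v_h)=0$ yields the error equation $a(t;\zeta,v_h)=\int_0^t B(t,s,\rho(s),v_h)\,ds$ for all $v_h \in X_h$.

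For the gradient bound I would test this equation with $v_h=\zeta$, use coercivity (H2) on the left and the continuity \eqref{1.1a} of $B$ on the right to obtain $\|\nabla\zeta(t)\|\lesssim\int_0^t\|\nabla\rho(s)\|\,ds$; combined with $\|\nabla\eta\|\lesssim h\|u\|_2$ and an ordinary Gr\"{o}nwall inequality this gives $\|\nabla\rho(t)\|\lesssim h$. For the $L^2$ bound I would use Aubin--Nitsche duality: solving $a(t;\Phi,v)=(\zeta,v)$ (the form is symmetric since $M$ is scalar) gives $\|\Phi\|_2\lesssim\|\zeta\|$, and Galerkin orthogonality reduces $\|\zeta\|^2$ to $\int_0^t B(t,s,\rho(s),R_h(t)\Phi)\,ds$. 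The key is to recover an extra power of $h$: because $\rho\in H^1_0(\Omega)$ and the coefficients $b_2,b_1,b_0$ of $B$ are smooth, I would integrate by parts in the second-order operator $B$ to get $\int_0^t B(t,s,\rho(s),\Phi)\,ds\lesssim\|\Phi\|_2\int_0^t\|\rho(s)\|\,ds$, while the discrete correction $\int_0^t B(t,s,\rho(s),R_h(t)\Phi-\Phi)\,ds$ is controlled by \eqref{1.1a}, the estimate $\|\nabla(\Phi-R_h(t)\Phi)\|\lesssim h\|\Phi\|_2$, and $\|\nabla\rho\|\lesssim h$. This produces $\|\zeta(t)\|\lesssim\int_0^t\|\rho(s)\|\,ds+h^2$, and a final Gr\"{o}nwall step gives $\|\rho(t)\|\lesssim h^2$, completing the first line of \eqref{8-10-22-17}.

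For the time-derivative estimates I would differentiate the error equation in $t$. Writing $M_t$ for the total time derivative of $M(x,t,\|\nabla u(t)\|^2)$, which is bounded by the Lipschitz continuity of $M$ and the regularity \eqref{8-10-22-16} of $u$ and $u_t$, and differentiating the variable upper limit of the Volterra integral, one finds $a(t;\zeta_t,v_h)=-(M_t\nabla\zeta,\nabla v_h)+B(t,t,\rho(t),v_h)+\int_0^t\partial_t B(t,s,\rho(s),v_h)\,ds$. The important structural point is that the right-hand side involves only the already-estimated quantities $\rho$ and $\zeta$, not $\rho_t$, so the same energy-plus-duality scheme applies: testing with $\zeta_t$ gives $\|\nabla\rho_t\|\lesssim h$ (after inserting $\|\nabla\zeta\|,\|\nabla\rho\|\lesssim h$ and the standard estimate $\|\nabla\eta_t\|\lesssim h$), and the dual argument, with the endpoint term $B(t,t,\rho(t),\cdot)$ and the memory term again integrated by parts to exploit $\|\rho\|\lesssim h^2$, should give $\|\rho_t\|\lesssim h^2$.

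I expect the main obstacle to be the $L^2$ duality estimate for $\rho_t$, and specifically the new term $(M_t\nabla\zeta,\nabla\Phi)$ created by differentiating the Kirchhoff coefficient. Since $\|\nabla\zeta\|$ is only $O(h)$ and $\|\Phi\|_1\lesssim\|\zeta_t\|$, a direct bound yields merely $O(h)$ rather than the optimal $O(h^2)$; to close the estimate one must gain a further power of $h$, either by transferring a derivative onto the $H^2$-regular dual solution $\Phi$ or by exploiting the element-wise near-constancy of the Lipschitz coefficient together with Galerkin orthogonality. This step is precisely where the Kirchhoff nonlinearity makes the analysis more delicate than the classical linear, $t$-independent Ritz--Volterra projection of Cannon--Lin, and I would expect it to require careful use of hypothesis (H2) (and possibly slightly more than mere Lipschitz regularity of $M$).
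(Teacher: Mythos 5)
Your proposal is correct in outline and follows essentially the same route as the paper: the paper in fact gives no proof of this theorem, deferring entirely to the references \cite{cannon1988non,cannon1990priori}, and what you have written is precisely a reconstruction of the argument in those works --- Ritz-projection splitting for the frozen coefficient $M\left(x,t,\|\nabla u(t)\|^{2}\right)$ (which, since $W$ is defined through the exact solution $u$, is just a fixed positive Lipschitz coefficient, so the Kirchhoff nonlinearity is inactive here), an energy estimate plus Gr\"{o}nwall for the $H^{1}$ bound, Aubin--Nitsche duality with the memory operator integrated by parts onto the $H^{2}$-regular dual solution for the $L^{2}$ bound, and then differentiation in time. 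The one caveat you raise yourself --- that controlling $M_{t}$, and $\nabla M_{t}$ in the duality step for $\rho_{t}$, requires slightly more smoothness of $M$ than the paper's bare Lipschitz hypothesis (H2) --- is genuine but is resolvable exactly as you suggest, by transferring the derivative onto the dual solution and invoking the already-established bound $\|\zeta\|\lesssim h^{2}$; this regularity gap is equally present in the paper's citation-only proof.
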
 
\begin{proof} 
    For the proof of this theorem we refer the readers to \cite{cannon1988non,cannon1990priori}.
\end{proof}
\par Now error estimate for the semi discrete formulation \eqref{6-10-22-4} is attained  as stated in Theorem \ref{t2.6}.
\begin{subsection}{Proof of the Theorem \ref{t2.6}}
\begin{proof} Denote $(W-u_{h}):=\theta$ such that $u-u_{h}=\rho+\theta.$ Then put $u_{h}=W-\theta$ in the problem  \eqref{6-10-22-4} to  have
	\begin{equation*}
	\begin{aligned}
	\left(\partial^{\alpha}_{t}(W-\theta),v_{h}\right) &+\left(M\left(x,t,\|\nabla u_{h}\|^{2}\right)\nabla (W-\theta),\nabla v_{h}\right)\\
	&=(f,v_{h}) +\int_{0}^{t}B(t,s,W(s)-\theta(s),v_{h})~ds.
	\end{aligned}
	\end{equation*}
	Weak formulation \eqref{6-10-22-3} and the definition \eqref{2.5} of the modified Ritz-Volterra projection operator  $W$ yield 
	\begin{equation}\label{4.4}
	\begin{aligned}
	&\left(\partial^{\alpha}_{t}\theta,v_{h}\right) +\left(M(x,t,\|\nabla u_{h}\|^{2}\right)\nabla \theta,\nabla v_{h})\\
	&=-\left(\partial^{\alpha}_{t}\rho,v_{h}\right)+\int_{0}^{t}B(t,s,\theta(s),v_{h})ds+\left(\left(M(x,t,\|\nabla u_{h}\|^{2}-M(x,t,\|\nabla u\|^{2}\right)\nabla W,\nabla v_{h}\right).
	\end{aligned}
	\end{equation}
	Set $v_{h}=\theta(t)$ in \eqref{4.4} and employ (H2), (H3) to obtain
	\begin{equation}\label{4.4-1}
	\begin{aligned}
	\left(\partial^{\alpha}_{t}\theta,\theta\right) +m_{0}\|\nabla \theta\|^{2}
	&=\|\partial^{\alpha}_{t}\rho\|\|\theta(t)\|+\|\nabla \theta(t)\|\int_{0}^{t}\|\nabla\theta(s)\|ds\\
	&+L_{M}(\|\nabla u_{h}\|+\|\nabla u\|)(\|\nabla \rho\|+\|\nabla \theta\|)\|\nabla W\|\|\nabla \theta\|.
	\end{aligned}
	\end{equation}
	By utilizing proved  a priori bounds  on $\|\nabla u\|, \|\nabla u_{h}\|$ and $\|\nabla W\|$ together with Cauchy-Schwarz and Young's inequality, we obtain
	\begin{equation}
	\begin{aligned}
\left(\partial^{\alpha}_{t}\theta,\theta\right)+(m_{0}-4L_{M}K^{2})\|\nabla \theta\|^{2} &\lesssim \|\partial^{\alpha}_{t}\rho\|^{2}+\|\theta\|^{2}+\int_{0}^{t}\|\nabla \theta(s)\|^{2}ds+\|\nabla \rho\|^{2}.
\end{aligned}
	\end{equation}
	Use (H2) and  apply similar  arguments as we prove estimate \eqref{8-10-22-2} to deduce
	\begin{equation*}
	\begin{aligned}
\|\theta\|^{2}_{L^{\infty}\left(0,T;L^{2}(\Omega)\right)}&+\left(l\ast\|\nabla \theta\|^{2}\right)(t)\lesssim \left[l\ast \left(\|\nabla \rho\|^{2}+\|\partial^{\alpha}_{t}\rho\|^{2}\right)\right](t)+\|\nabla \theta(0)\|^{2}.
\end{aligned}
	\end{equation*}
	Consider 
	\begin{equation}\label{11-10-22-51}
	\begin{aligned}
	  \|\partial^{\alpha}_{t}\rho\|=  \|~^{C}D^{\alpha}_{t}\rho\|&=\left\|\frac{1}{\Gamma(1-\alpha)}\int_{0}^{t}(t-s)^{-\alpha}\frac{\partial \rho}{\partial s}(s)~ds\right\|\\
	    &\lesssim \frac{1}{\Gamma(1-\alpha)}\int_{0}^{t}(t-s)^{-\alpha}\left\|\frac{\partial \rho}{\partial s}(s)\right\|~ds\\
	    &\lesssim \frac{1}{\Gamma(1-\alpha)}\int_{0}^{t}(t-s)^{-\alpha} h^2 ~ds \lesssim h^{2}.
	    \end{aligned}
	\end{equation}
We choose $u_{h}^{0}=W(0)$  such that $\theta(0)=0$. Then apply approximation properties of modified Ritz-Volterra projection operator \eqref{8-10-22-17} and  \eqref{11-10-22-51} to  conclude
\begin{equation*}
	\begin{aligned}
\|\theta\|^{2}_{L^{\infty}\left(0,T;L^{2}(\Omega)\right)}+\left(l\ast\|\nabla \theta\|^{2}\right)(t)\lesssim h^{2}+ h^{4}\lesssim  h^{2}.
\end{aligned}
	\end{equation*}
	Finally, triangle inequality and estimate \eqref{8-10-22-17} finish the proof.
\end{proof}
\end{subsection}
 
\begin{section}{Linearized  L1 Galerkin FEM}\label{11-10-22-56}
	In this section, we prove the well-posedness of the numerical scheme \eqref{7-10-22-2} and  carry out its convergence analysis. The following two lemmas provide  a priori bounds on the solution of the problem \eqref{7-10-22-2}.
	\begin{lem}\label{5.3}
		Under the hypothesis (H1), (H2), and (H3) the solution  $u_{h}^{n}~ (n\geq 1)$ of the scheme \eqref{7-10-22-2}  satisfy the following a priori bound
		\begin{equation}\label{S1.24}
		\begin{aligned}
		\max_{1\leq m \leq N}\|u_{h}^{m}\|^{2}+k^{\alpha}\sum_{n=1}^{N}p_{N-n}\|\nabla u_{h}^{n}\|^{2}
		\lesssim \|\nabla u_{0}\|^{2}+\max_{1\leq n \leq N}\|f^{n}\|^{2}.
		\end{aligned}
		\end{equation}
			\end{lem}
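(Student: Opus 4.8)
The plan is to carry the energy argument of Theorem \ref{t2.5} over to the discrete level, the three pillars being a discrete positivity property of the operator $\mathbb{D}^{\alpha}_{t}$ (replacing Lemma \ref{12-5-5}), the coercivity $M\geq m_{0}$ from (H2), and the kernel identities \eqref{2.81}--\eqref{2.91} of Lemma \ref{L1.2} (replacing the convolution $k\ast l=1$). Each time level of \eqref{7-10-22-2} is a linear coercive problem, so by Lax--Milgram $u_{h}^{n}$ exists and is unique; I may thus take it as given and bound it. First I would test \eqref{7-10-22-2} with $v_{h}=u_{h}^{n}$. The crucial discrete tool is the inequality $\big(\mathbb{D}^{\alpha}_{t}u_{h}^{n},u_{h}^{n}\big)\geq \frac{1}{2}\,\mathbb{D}^{\alpha}_{t}\|u_{h}^{n}\|^{2}$, which follows from the positive-definiteness of the L1 weights $a_{i}$. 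Together with $\big(M(\cdots)\nabla u_{h}^{n},\nabla u_{h}^{n}\big)\geq m_{0}\|\nabla u_{h}^{n}\|^{2}$ this gives the step-wise inequality
\[
\frac{1}{2}\,\mathbb{D}^{\alpha}_{t}\|u_{h}^{n}\|^{2}+m_{0}\|\nabla u_{h}^{n}\|^{2}\leq \big(f^{n},u_{h}^{n}\big)+\sum_{j=1}^{n-1}w_{nj}B\big(t_{n},t_{j},u_{h}^{j},u_{h}^{n}\big).
\]
I emphasise that only the lower bound $M\geq m_{0}$ is used, so the extrapolated argument $\|\nabla\bar{u}_{h}^{n-1}\|^{2}$ in the diffusion coefficient is immaterial here and the $n=1$ equation is handled identically.

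Next I would estimate the right-hand side. The source is handled by Cauchy--Schwarz and Young, $(f^{n},u_{h}^{n})\leq \frac{1}{2}\|f^{n}\|^{2}+\frac{1}{2}\|u_{h}^{n}\|^{2}$. For the memory term I invoke the continuity bound \eqref{1.1a}, so that $\sum_{j<n}w_{nj}|B(\cdots)|\leq B_{0}\|\nabla u_{h}^{n}\|\sum_{j<n}w_{nj}\|\nabla u_{h}^{j}\|$; since the modified Simpson weights are nonnegative with $\sum_{j<n}w_{nj}\leq t_{n}\leq T$, a Cauchy--Schwarz in $j$ followed by Young's inequality absorbs a term $\frac{m_{0}}{2}\|\nabla u_{h}^{n}\|^{2}$ into the coercive part and leaves a gradient history term on the right, yielding
\[
\mathbb{D}^{\alpha}_{t}\|u_{h}^{n}\|^{2}+m_{0}\|\nabla u_{h}^{n}\|^{2}\lesssim \|f^{n}\|^{2}+\|u_{h}^{n}\|^{2}+\sum_{j=1}^{n-1}w_{nj}\|\nabla u_{h}^{j}\|^{2}.
\]

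The decisive algebraic step is the inversion of $\mathbb{D}^{\alpha}_{t}$ through the discrete kernel $p_{n}$. Using the convolution identity \eqref{2.81} and interchanging the order of summation, one verifies that $\|u_{h}^{m}\|^{2}=\|u_{h}^{0}\|^{2}+\Gamma(2-\alpha)k^{\alpha}\sum_{n=1}^{m}p_{m-n}\,\mathbb{D}^{\alpha}_{t}\|u_{h}^{n}\|^{2}$. Applying $\Gamma(2-\alpha)k^{\alpha}\sum_{n=1}^{m}p_{m-n}(\cdot)$ to the step-wise inequality therefore moves the coercive term to the left as precisely $k^{\alpha}\sum_{n=1}^{m}p_{m-n}\|\nabla u_{h}^{n}\|^{2}$, matching the second term in \eqref{S1.24}; the data contribute $\Gamma(2-\alpha)k^{\alpha}\sum_{n}p_{m-n}\|f^{n}\|^{2}\lesssim \frac{T^{\alpha}}{\Gamma(1+\alpha)}\max_{n}\|f^{n}\|^{2}$ by \eqref{2.91}, while the initial term is controlled by the choice $u_{h}^{0}=W(0)$ and the Poincar\'e inequality to give $\|\nabla u_{0}\|^{2}$.

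The main obstacle will be closing the two surviving history contributions, $k^{\alpha}\sum_{n}p_{m-n}\|u_{h}^{n}\|^{2}$ and $k^{\alpha}\sum_{n}p_{m-n}\sum_{j<n}w_{nj}\|\nabla u_{h}^{j}\|^{2}$, into a Gronwall argument. For the latter I would interchange the $n$ and $j$ summations and use $\sum_{j}w_{nj}\leq T$ to bound it by a constant multiple of the partial energies $\max_{1\le l\le m}\|u_{h}^{l}\|^{2}+k^{\alpha}\sum_{n\le m}p_{m-n}\|\nabla u_{h}^{n}\|^{2}$, carrying an additional factor of the form $(t-s)^{1-\alpha}$ that is absorbed as in the splitting $(t-s)^{\alpha-1}(t-s)^{1-\alpha}$ used in \eqref{13-5-7}; this is the step most sensitive to the $n$-dependent quadrature weights and where the estimate could be delicate. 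Feeding the outcome into a discrete fractional Gronwall inequality (the summation analogue of Lemma \ref{lem2.5}, with the growth factor again controlled by \eqref{2.91}) produces a Mittag-Leffler type constant that is uniformly bounded on $[0,T]$. Taking the maximum over $m$ and setting $m=N$ then delivers \eqref{S1.24}.
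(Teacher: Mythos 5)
Your proposal is correct and follows essentially the same route as the paper's proof: testing with $v_{h}=u_{h}^{n}$, using the identity $\left(\mathbb{D}^{\alpha}_{t}u_{h}^{n},u_{h}^{n}\right)\geq \frac{1}{2}\mathbb{D}^{\alpha}_{t}\|u_{h}^{n}\|^{2}$ together with (H2), (H3) and \eqref{1.1a}, then applying the discrete kernel $p_{m-n}$, summing, and invoking \eqref{2.81} and \eqref{2.91} before closing with a discrete Gr\"{o}nwall argument. The only slip is your side remark that every time level is a linear problem solvable by Lax--Milgram: the $n=1$ equation is nonlinear, since $M$ is evaluated at $\|\nabla u_{h}^{1}\|^{2}$ rather than at an extrapolated argument (the paper establishes its solvability via a Br\"{o}uwer fixed-point argument in Theorem \ref{T1.4}), but because the lemma only asserts an a priori bound on solutions, this does not affect your argument.
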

		\begin{proof}
			Put $v_{h}=u_{h}^{1}$ for $n=1$ in the formulation \eqref{7-10-22-2} to get 
			\begin{equation*}
    \begin{aligned}
\left(\mathbb{D}^{\alpha}_{t}u_{h}^{1},u_{h}^{1}\right)+\left(M\left(x,t_{1},\|\nabla u_{h}^{1}\|^{2}\right)\nabla u_{h}^{1},\nabla u_{h}^{1}\right)=\left(f^{1},u_{h}^{1}\right)+kB\left(t_{1},t_{0},u_{h}^{0},u_{h}^{1}\right).
\end{aligned}
\end{equation*}
Employing (H2) and (H3) to obtain 
			\begin{equation*}
			\begin{aligned}
			(1-k^{\alpha}\Gamma(2-\alpha))\|u_{h}^{1}\|^{2}+k^{\alpha}\|\nabla u_{h}^{1}\|^{2}&\lesssim k^{\alpha}
			\left(\|f^{1}\|^{2}+k^{2}\|\nabla u_{h}^{0}\|^{2}\right)+\|u_{h}^{0}\|^{2}.
			\end{aligned}
			\end{equation*}
			For sufficiently small $k$ such that $k^{\alpha}< \frac{1}{\Gamma(2-\alpha)}$, we conclude
			\begin{equation*}
			\begin{aligned}
			\|u_{h}^{1}\|^{2}+k^{\alpha}\|\nabla u_{h}^{1}\|^{2}&\lesssim 
			\|f^{1}\|^{2}+\|\nabla u_{h}^{0}\|^{2}\lesssim 
			\|f^{1}\|^{2}+\|\nabla u_{0}\|^{2}.
			\end{aligned}
			\end{equation*}
			Further, set $v_{h}=u_{h}^{n}$ for $n\geq 2$ in the scheme \eqref{7-10-22-2} to have
			\begin{equation}
			    	\left(\mathbb{D}^{\alpha}_{t}u_{h}^{n},u_{h}^{n}\right)+\left(M\left(x,t_{n},\|\nabla \bar{u}_{h}^{n-1}\|^{2}\right)\nabla u_{h}^{n},\nabla u_{h}^{n}\right)=\left(f^{n},u_{h}^{n}\right)+\sum_{j=1}^{n-1}w_{nj}B\left(t_{n},t_{j},u_{h}^{j},u_{h}^{n}\right).
			\end{equation}
		Applying the identity $\left(\mathbb{D}^{\alpha}_{t}u_{h}^{n},u_{h}^{n}\right)\geq \frac{1}{2}\mathbb{D}^{\alpha}_{t}\|u_{h}^{n}\|^{2}$ \cite{li2016analysis} and hypotheses  (H2), (H3) along with Cauchy-Schwarz and  Young's inequality to reach at
			\begin{equation}\label{S1.23-2}
		\mathbb{D}^{\alpha}_{t}\|u_{h}^{n}\|^{2}+\|\nabla u_{h}^{n}\|^{2}\lesssim  \left(\|f^{n}\|^{2}+\|u_{h}^{n}\|^{2}+\sum_{j=1}^{n-1}w_{nj}\|\nabla u_{h}^{j}\|^{2}\right).
			\end{equation}
			By the definition of $\mathbb{D}^{\alpha}_{t}\|u_{h}^{n}\|^{2}$ \eqref{2.9}, the equation \eqref{S1.23-2} reduces to 
			\begin{equation}\label{S1.23}
			\frac{k^{-\alpha}}{\Gamma(2-\alpha)}\sum_{j=1}^{n}a_{n-j}\left(\|u_{h}^{j}\|^{2}-\|u_{h}^{j-1}\|^{2}\right)+\|\nabla u_{h}^{n}\|^{2}\lesssim  \left(\|f^{n}\|^{2}+\|u_{h}^{n}\|^{2}+\sum_{j=1}^{n-1}w_{nj}\|\nabla u_{h}^{j}\|^{2}\right).
			\end{equation}
			Multiply the equation \eqref{S1.23} by discrete convolution $P_{m-n}$ defined in Lemma \ref{L1.2} and take summation from $n=1$ to $m$ to obtain 
			\begin{equation*}
			\begin{aligned}
			&\sum_{n=1}^{m}p_{m-n}\sum_{j=1}^{n}a_{n-j}\left(\|u_{h}^{j}\|^{2}-\|u_{h}^{j-1}\|^{2}\right)+k^{\alpha}\Gamma(2-\alpha)\sum_{n=1}^{m}p_{m-n}\|\nabla u_{h}^{n}\|^{2}\\
			&\lesssim k^{\alpha}\Gamma(2-\alpha)\left(\sum_{n=1}^{m}p_{m-n}\|f^{n}\|^{2}+\sum_{n=1}^{m}p_{m-n}\|u_{h}^{n}\|^{2}+\sum_{n=1}^{m}p_{m-n}\sum_{j=1}^{n-1}w_{nj}\|\nabla u_{h}^{j}\|^{2}\right).
			\end{aligned}
			\end{equation*}
			 Interchanging the summation and property of discrete kernel \eqref{2.81} with $\alpha_{0}=k^{\alpha}\Gamma(2-\alpha)$ yield
			\begin{equation*}
			\begin{aligned}
		(1-\alpha_{0})	\|u_{h}^{m}\|^{2}+k^{\alpha}\sum_{n=1}^{m}p_{m-n}\|\nabla u_{h}^{n}\|^{2}
			&\lesssim \sum_{n=1}^{m-1}\left(k^{\alpha}p_{m-n}\|u_{h}^{n}\|^{2}+k_{1}k^{\alpha}\sum _{j=1}^{n}p_{n-j}\|\nabla u_{h}^{j}\|^{2}\right)\\
			&+k^{\alpha}\sum_{n=1}^{m}p_{m-n}\|f_{h}^{n}\|^{2}+\|u_{h}^{0}\|^{2}.
			\end{aligned}
			\end{equation*}
			 Then for sufficiently small $k^{\alpha} <\frac{1}{\Gamma{(2-\alpha)}}$ one have 
				\begin{equation*}
			\begin{aligned}
			\|u_{h}^{m}\|^{2}+k^{\alpha}\sum_{n=1}^{m}p_{m-n}\|\nabla u_{h}^{n}\|^{2}
			&\lesssim \sum_{n=1}^{m-1}\left(k^{\alpha}p_{m-n}+k_{1}\right)\left(\|u_{h}^{n}\|^{2}+k^{\alpha}\sum _{j=1}^{n}p_{n-j}\|\nabla u_{h}^{j}\|^{2}\right)\\
			&+k^{\alpha}\sum_{n=1}^{m}p_{m-n}\|f^{n}\|^{2}+\|u_{h}^{0}\|^{2}.
			\end{aligned}
			\end{equation*}
			Further,  the  discrete Gr\"{o}nwall's inequality provides
			\begin{equation}\label{11-10-22-3}
			\begin{aligned}
			\|u_{h}^{m}\|^{2}+k^{\alpha}\sum_{n=1}^{m}p_{m-n}\|\nabla u_{h}^{n}\|^{2}
			&\lesssim \left(k^{\alpha}\sum_{n=1}^{m}p_{m-n}\|f^{n}\|^{2}+\|u_{h}^{0}\|^{2}\right)\exp\left(\sum_{n=1}^{m-1}\left(k^{\alpha}p_{m-n}+k_{1}\right)\right).
			\end{aligned}
			\end{equation}
			Finally, using property of discrete kernel   \eqref{2.91} one obtain    
			\begin{equation}\label{11-10-22-1}
			    k^{\alpha}\sum_{n=1}^{m}p_{m-n}\|f^{n}\|^{2}\lesssim \max_{1\leq n \leq N}\|f^{n}\|^{2}\left(k^{\alpha}\sum_{n=1}^{m}p_{m-n}\right)\lesssim\max_{1\leq n \leq N}\|f^{n}\|^{2} k^{\alpha}m^{\alpha}\lesssim T^{\alpha} \max_{1\leq n \leq N}\|f^{n}\|^{2}.
			\end{equation}
			Also, 
	\begin{equation}\label{11-10-22-2}
	    \sum_{n=1}^{m-1}\left(k^{\alpha}p_{m-n}+k_{1}\right)\lesssim k^{\alpha}m^{\alpha}+mk_{1}\lesssim T.
	\end{equation}
	To conclude the result \eqref{S1.24},  put \eqref{11-10-22-1} and \eqref{11-10-22-2} in \eqref{11-10-22-3} as 
	\begin{equation}\label{11-10-22-3-1}
			\begin{aligned}
			\|u_{h}^{m}\|^{2}+k^{\alpha}\sum_{n=1}^{m}p_{m-n}\|\nabla u_{h}^{n}\|^{2}
			&\lesssim \max_{1\leq n \leq N}\|f^{n}\|^{2}+\|u_{h}^{0}\|^{2} \lesssim \max_{1\leq n \leq N}\|f^{n}\|^{2}+\|\nabla u_{0}\|^{2}.
			\end{aligned}
			\end{equation}
		\end{proof}
			\begin{lem}\label{5.3-1}
		Suppose that hypotheses (H1), (H2), and (H3) hold. Then the  solution  $u_{h}^{n}~ (n\geq 1)$ of the scheme \eqref{7-10-22-2}  satisfy the following a priori bound
		\begin{equation}\label{S1.24-1}
		\begin{aligned}
		\max_{1\leq m \leq N}\|\nabla u_{h}^{m}\|^{2}+k^{\alpha}\sum_{n=1}^{N}p_{N-n}\|\Delta_{h}^{M}u_{h}^{n}\|^{2}
		\lesssim \|\nabla u_{0}\|^{2}+ \max_{1\leq n \leq N}\|f^{n}\|^{2}.
		\end{aligned}
		\end{equation}
		where $\Delta_{h}^{M} : X_{h}\rightarrow X_{h}$ is the  discrete Laplacian operator defined in \eqref{8-10-22-5} associated with $M$.
			\end{lem}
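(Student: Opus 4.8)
The plan is to mirror, at the fully discrete level, the higher-regularity a priori estimate \eqref{8-10-22-10} obtained in the proof of Theorem \ref{t2.5}. Since the right-hand side of the statement features $\|\Delta_{h}^{M}u_{h}^{n}\|^{2}$, the natural test function is $v_{h}=-\Delta_{h}^{M}u_{h}^{n}$ in the scheme \eqref{7-10-22-2} for $n\geq 2$, with the case $n=1$ treated separately exactly as in the proof of Lemma \ref{5.3}. This choice produces $\|\Delta_{h}^{M}u_{h}^{n}\|^{2}$ on the left and, using the defining relation \eqref{8-10-22-5} of $\Delta_{h}^{M}$ together with the linearity of $\mathbb{D}^{\alpha}_{t}$ and the commutativity of $\nabla$ with the discrete difference, converts the fractional term into $\left(\mathbb{D}^{\alpha}_{t}u_{h}^{n},-\Delta_{h}^{M}u_{h}^{n}\right)=\left(M_{n}\nabla u_{h}^{n},\mathbb{D}^{\alpha}_{t}\nabla u_{h}^{n}\right)$, where $M_{n}:=M\!\left(x,t_{n},\|\nabla\bar{u}_{h}^{n-1}\|^{2}\right)$.

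Next I would lower-bound this fractional term. The pointwise discrete fractional inequality $w\cdot\mathbb{D}^{\alpha}_{t}w\geq\tfrac{1}{2}\mathbb{D}^{\alpha}_{t}|w|^{2}$ from \cite{li2016analysis}, applied with $w=\nabla u_{h}^{n}(x)$ and multiplied by $M_{n}(x)\geq m_{0}>0$ before integrating, gives $\left(M_{n}\nabla u_{h}^{n},\mathbb{D}^{\alpha}_{t}\nabla u_{h}^{n}\right)\geq\tfrac{1}{2}\int_{\Omega}M_{n}\,\mathbb{D}^{\alpha}_{t}|\nabla u_{h}^{n}|^{2}\,dx$. \emph{The main obstacle is precisely that the Kirchhoff weight $M_{n}$ depends on $x$ and, through $\|\nabla\bar{u}_{h}^{n-1}\|^{2}$, on the time index $n$, so it cannot be pulled through the discrete fractional difference to recover a clean $\mathbb{D}^{\alpha}_{t}\|\nabla u_{h}^{n}\|^{2}$.} I would resolve this by invoking the Lipschitz continuity of $M$ in (H2) and the a priori bounds of Lemma \ref{5.3}: the latter make $\|\nabla\bar{u}_{h}^{n-1}\|$ bounded, hence $m_{0}\leq M_{n}\leq M_{1}$ for a constant $M_{1}$, and they let me estimate the deviation $M_{n}-M_{j}$ by $L_{M}\big|\|\nabla\bar{u}_{h}^{n-1}\|^{2}-\|\nabla\bar{u}_{h}^{j-1}\|^{2}\big|$; these deviation terms are moved to the right-hand side where they are controlled by already-bounded gradient quantities, leaving the leading behaviour $\gtrsim m_{0}\,\mathbb{D}^{\alpha}_{t}\|\nabla u_{h}^{n}\|^{2}$.

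For the right-hand side, the source term is handled by $\left(f^{n},-\Delta_{h}^{M}u_{h}^{n}\right)\leq\|f^{n}\|\,\|\Delta_{h}^{M}u_{h}^{n}\|$ and Young's inequality, so that a small multiple of $\|\Delta_{h}^{M}u_{h}^{n}\|^{2}$ is absorbed on the left. The memory term $\sum_{j=1}^{n-1}w_{nj}B(t_{n},t_{j},u_{h}^{j},-\Delta_{h}^{M}u_{h}^{n})$ is split according to (H3) into its $\Delta_{h}^{b_{2}}$, $b_{1}$ and $b_{0}$ contributions; bounding the principal part by $\|\Delta_{h}^{b_{2}}u_{h}^{j}\|\,\|\Delta_{h}^{M}u_{h}^{n}\|$ and using the discrete analogue $\|\Delta_{h}^{b_{2}}u_{h}^{j}\|\lesssim\|\nabla u_{h}^{j}\|$ of \eqref{29-10-22-1}, followed again by Young's inequality, reduces everything to terms $\sum_{j}w_{nj}\big(\|\nabla u_{h}^{j}\|^{2}+\|u_{h}^{j}\|^{2}\big)$ plus a further absorbable copy of $\|\Delta_{h}^{M}u_{h}^{n}\|^{2}$. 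Collecting these estimates yields a discrete inequality of the form $\mathbb{D}^{\alpha}_{t}\|\nabla u_{h}^{n}\|^{2}+\|\Delta_{h}^{M}u_{h}^{n}\|^{2}\lesssim\|f^{n}\|^{2}+\|\nabla u_{h}^{n}\|^{2}+\sum_{j=1}^{n-1}w_{nj}\big(\|\nabla u_{h}^{j}\|^{2}+\|u_{h}^{j}\|^{2}\big)$.

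Finally, I would multiply this inequality by the discrete convolution kernel $p_{m-n}$ of Lemma \ref{L1.2}, sum from $n=1$ to $m$, and interchange the order of summation, using the identity \eqref{2.81} to telescope the fractional part into $\|\nabla u_{h}^{m}\|^{2}$ (up to the $n=1$ contribution and the factor $k^{\alpha}\Gamma(2-\alpha)$). The bound \eqref{2.91} then controls the source contribution by $\lesssim T^{\alpha}\max_{1\leq n\leq N}\|f^{n}\|^{2}$ and, exactly as in \eqref{11-10-22-2}, keeps the accumulated weights uniformly bounded; the remaining gradient and $L^{2}$ sums on the right are already dominated, via Lemma \ref{5.3}, by $\|\nabla u_{0}\|^{2}+\max_{1\leq n\leq N}\|f^{n}\|^{2}$. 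An application of the discrete fractional Gr\"{o}nwall inequality (as used for \eqref{11-10-22-3}) then yields the asserted estimate, after combining with the separately handled $n=1$ base case.
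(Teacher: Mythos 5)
Your proposal follows the same skeleton as the paper's proof: rewrite the scheme via the discrete operators \eqref{8-10-22-5}--\eqref{8-10-22-6}, test with $v_{h}=-\Delta_{h}^{M}u_{h}^{n}$, control the memory terms through $\|\Delta_{h}^{b_{2}}u_{h}^{j}\|\lesssim\|\nabla u_{h}^{j}\|$ (the discrete analogue of \eqref{29-10-22-1}) plus Young's inequality, arrive at $\mathbb{D}^{\alpha}_{t}\|\nabla u_{h}^{n}\|^{2}+\|\Delta_{h}^{M}u_{h}^{n}\|^{2}\lesssim\|f^{n}\|^{2}+\sum_{j=1}^{n-1}w_{nj}\|\nabla u_{h}^{j}\|^{2}$, and finish with the $p_{m-n}$-kernel summation, \eqref{2.81}, \eqref{2.91} and the discrete Gr\"{o}nwall inequality, exactly as for \eqref{S1.24}. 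One small point of divergence: the paper's $n=1$ case is \emph{not} handled "exactly as in the proof of Lemma \ref{5.3}" (which tests with $u_{h}^{1}$ and only yields $\|u_{h}^{1}\|^{2}+k^{\alpha}\|\nabla u_{h}^{1}\|^{2}$); it is handled by testing the $n=1$ equation with $-\Delta_{h}^{M}u_{h}^{1}$, which is what the norms in \eqref{S1.24-1} require.

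The genuine gap is in the step you yourself flag as the main obstacle. To pass from $\left(M_{n}\nabla u_{h}^{n},\mathbb{D}^{\alpha}_{t}\nabla u_{h}^{n}\right)$ to something $\gtrsim\mathbb{D}^{\alpha}_{t}\|\nabla u_{h}^{n}\|^{2}$ you assert that Lemma \ref{5.3} makes $\|\nabla\bar{u}_{h}^{n-1}\|$ uniformly bounded, so that $m_{0}\leq M_{n}\leq M_{1}$. Lemma \ref{5.3} gives only $\max_{m}\|u_{h}^{m}\|^{2}$ (an $L^{2}$ bound) together with the weighted sum $k^{\alpha}\sum_{n}p_{N-n}\|\nabla u_{h}^{n}\|^{2}$; from the latter one extracts at best $\|\nabla u_{h}^{n}\|^{2}\lesssim k^{-\alpha}p_{N-n}^{-1}$, which degenerates as $k\to0$. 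A uniform-in-$n$ gradient bound is precisely the first assertion of Lemma \ref{5.3-1} you are proving, so invoking it is circular. Moreover, even granting $m_{0}\leq M_{n}\leq M_{1}$, the sketched deviation argument does not close: after Abel summation the differences $M_{n}-M_{j}$ (which are bounded, not small) multiply the non-sign-definite quantities $|\nabla u_{h}^{j}|^{2}-|\nabla u_{h}^{j-1}|^{2}$ with weights $k^{-\alpha}a_{n-j}$, and the mismatch between the upper and lower bounds of $M$ destroys the telescoping structure that the identity \eqref{2.81} needs in the kernel-summation step. To be fair, the paper's own proof does not confront this either — in \eqref{9-10-22-5-1} it silently replaces the weighted pairing by $\left(\mathbb{D}^{\alpha}_{t}\nabla u_{h}^{n},\nabla u_{h}^{n}\right)$ citing (H2) — so you have correctly identified a real subtlety; but your repair presupposes the conclusion, and an actual fix (for instance, an induction on the time level exploiting that $M_{n}$ depends only on $u_{h}^{n-1},u_{h}^{n-2}$, combined with an argument tolerating different constants in the two-sided bounds of $M$) is missing.
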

			\begin{proof}
			    By  making use of definitions of discrete Laplacian operator $-\Delta_{h}^{M}u_{h}^{1}$ \eqref{8-10-22-5} and $-\Delta_{h}^{b_{2}}u_{h}^{0}$ \eqref{8-10-22-6} the equation for $n=1$ in the scheme \eqref{7-10-22-2} is rewritten as 
			     	\begin{equation}\label{9-10-22-1}
    \begin{aligned}
\left(\mathbb{D}^{\alpha}_{t}u_{h}^{1},v_{h}\right)+(-\Delta_{h}^{M}u_{h}^{1},v_{h})&=\left(f^{1},v_{h}\right)+k(-\Delta_{h}^{b_{2}}u_{h}^{0},v_{h})+k(\nabla \cdot (b_{1}(x,t,s)u_{h}^{0}),v_{h})\\
&+k(b_{0}(x,t,s)u_{h}^{0},v_{h}).
\end{aligned}
\end{equation}
Setting $v_{h}=-\Delta_{M_{h}}u_{h}^{1}$ in \eqref{9-10-22-1} one obtain
	\begin{equation}\label{9-10-22-2}
    \begin{aligned}
\left(\mathbb{D}^{\alpha}_{t}\nabla u_{h}^{1},\nabla u_{h}^{1}\right)+\|\Delta_{h}^{M}u_{h}^{1}\|^{2}&\lesssim \|f^{1}\|^{2}+k^{2}\left(\|\Delta_{h}^{b_{2}}u_{h}^{0}\|^{2}+\|\nabla u_{h}^{0}\|^{2}+\|u_{h}^{0}\|^{2}\right).
\end{aligned}
\end{equation}
Identity $\left(\mathbb{D}^{\alpha}_{t}u_{h}^{n},u_{h}^{n}\right)\geq \frac{1}{2}\mathbb{D}^{\alpha}_{t}\|u_{h}^{n}\|^{2}$ and estimate \eqref{29-10-22-1}  simplify the equation \eqref{9-10-22-2} to  
\begin{equation}\label{9-10-22-3}
    \begin{aligned}
\|\nabla u_{h}^{1}\|^{2}+k^{\alpha}\|\Delta_{h}^{M}u_{h}^{1}\|^{2}&\lesssim k^{\alpha}\|f^{1}\|^{2}+k^{\alpha}k^{2}\|\nabla u_{h}^{0}\|^{2}+\|\nabla u_{h}^{0}\|^{2}.
\end{aligned}
\end{equation}
For sufficiently small $k$, we deduce
		\begin{equation}\label{9-10-22-4}
    \begin{aligned}
\|\nabla u_{h}^{1}\|^{2}+k^{\alpha}\|\Delta_{h}^{M}u_{h}^{1}\|^{2}&\lesssim \|f^{1}\|^{2}+\|\nabla u_{h}^{0}\|^{2}\lesssim \|f^{1}\|^{2}+\|\nabla u_{0}\|^{2}.
\end{aligned}
\end{equation}	     
Consider the scheme \eqref{7-10-22-2} for $n\geq 2$ with definitions of discrete Laplacian operators \eqref{8-10-22-5} and \eqref{8-10-22-6}  
\begin{equation}\label{9-10-22-5}
    \begin{aligned}
    \left(\mathbb{D}^{\alpha}_{t}u_{h}^{n},v_{h}\right)+\left(-\Delta_{h}^{M}u_{h}^{n}, v_{h}\right)&=\left(f^{n},v_{h}\right)+\sum_{j=1}^{n-1}w_{nj}\left(-\Delta_{h}^{b_{2}}u_{h}^{j},v_{h}\right)\\
    &+\sum_{j=1}^{n-1}w_{nj}\left(\nabla \cdot(b_{1}(x,t_{n},t_{j})u_{h}^{j}),v_{h}\right)\\
    &+\sum_{j=1}^{n-1}w_{nj}\left(b_{0}(x,t_{n},t_{j})u_{h}^{j},v_{h}\right).
    \end{aligned}
\end{equation}
Take $v_{h}=-\Delta_{h}^{M}u_{h}^{n}$ in \eqref{9-10-22-5} and apply (H2), (H3) along with Cauchy-Schwarz and  Young's inequality to get 
\begin{equation}\label{9-10-22-5-1}
    \begin{aligned}
    \left(\mathbb{D}^{\alpha}_{t}\nabla u_{h}^{n},\nabla u_{h}^{n}\right)+\|\Delta_{h}^{M}u_{h}^{n}\|^{2}&\lesssim \|f^{n}\|^{2}+\sum_{j=1}^{n-1}w_{nj}\|\Delta_{h}^{b_{2}}u_{h}^{j}\|^{2}\\
    &+\sum_{j=1}^{n-1}w_{nj}\|\nabla u_{h}^{j}\|^{2}+\sum_{j=1}^{n-1}w_{nj}\|u_{h}^{j}\|^{2}.
    \end{aligned}
\end{equation}
By using the estimate  \eqref{29-10-22-1}  and  the identity $\left(\mathbb{D}^{\alpha}_{t}u_{h}^{n},u_{h}^{n}\right)\geq \frac{1}{2}\mathbb{D}^{\alpha}_{t}\|u_{h}^{n}\|^{2}$,  the equation \eqref{9-10-22-5-1} is converted into  to 
			\begin{equation}\label{S1.23-1}
			\mathbb{D}^{\alpha}_{t}\|\nabla u_{h}^{n}\|^{2}+\|\Delta_{h}^{M}u_{h}^{n}\|^{2}\lesssim  \left(\|f^{n}\|^{2}+\sum_{j=1}^{n-1}w_{nj}\|\nabla u_{h}^{j}\|^{2}\right).
			\end{equation}
Further proceed as we prove  estimate \eqref{S1.24} to complete the proof.
			\end{proof}
	\par To show the existence of the fully discrete solution $u_{h}^{n}~(n\geq 1)$ of the problem \eqref{7-10-22-2}, the following variant of Br\"{o}uwer fixed point theorem is used.
	\begin{thm}\cite{kesavan1989topics}\label{T1.3}
		Let $H$ be   finite dimensional Hilbert space. Let $G:H\rightarrow H$ be a continuous map such that $\left(G(w),w\right)>0$  for all $w$ in $H$ with $\|w\|=r,~r>0$ then there exists a $\tilde{w}$ in $H$ such that $G(\tilde{w})=0$ and $\|\tilde{w}\|\leq r.$
	\end{thm}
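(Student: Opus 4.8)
The plan is to derive this from the classical Brouwer fixed point theorem by a contradiction argument. Write $\bar{B}_r := \{w \in H : \|w\| \le r\}$ for the closed ball, which is compact and convex since $H$ is finite dimensional. Suppose, contrary to the claim, that $G(w) \neq 0$ for every $w \in \bar{B}_r$.

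Under this assumption I would introduce the auxiliary map
$$\Phi(w) := -\frac{r\, G(w)}{\|G(w)\|}, \qquad w \in \bar{B}_r.$$
Since $G$ is continuous and nowhere zero on $\bar{B}_r$, the map $\Phi$ is continuous, and $\|\Phi(w)\| = r$ for all $w$, so $\Phi$ sends $\bar{B}_r$ into itself (in fact onto its boundary sphere). Brouwer's fixed point theorem then yields a point $w^\ast \in \bar{B}_r$ with $\Phi(w^\ast) = w^\ast$. Because $\|\Phi(w^\ast)\| = r$, the fixed point must satisfy $\|w^\ast\| = r$, i.e. it lies exactly on the sphere where the hypothesis $(G(w),w) > 0$ is available.

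The contradiction then comes from testing the fixed-point identity against $w^\ast$ itself:
$$r^2 = \|w^\ast\|^2 = (w^\ast, w^\ast) = (\Phi(w^\ast), w^\ast) = -\frac{r}{\|G(w^\ast)\|}\,(G(w^\ast), w^\ast).$$
The left-hand side is strictly positive, whereas the right-hand side is strictly negative because $-r/\|G(w^\ast)\| < 0$ and $(G(w^\ast), w^\ast) > 0$ by the hypothesis applied at $\|w^\ast\| = r$. This is impossible, so the supposition fails and there exists $\tilde{w} \in \bar{B}_r$ with $G(\tilde{w}) = 0$ and $\|\tilde{w}\| \le r$.

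I do not expect any serious obstacle beyond invoking Brouwer's theorem itself, which is the genuinely deep ingredient and which I would take as given (it is the finite-dimensional input that makes the compactness of $\bar{B}_r$ usable). The only points requiring a little care are verifying that $\Phi$ is well defined and continuous — both of which rest precisely on the contradiction hypothesis that $G$ does not vanish on $\bar{B}_r$ — and observing that the fixed point is forced onto the sphere, so that the sign condition on $(G(w),w)$ can actually be applied.
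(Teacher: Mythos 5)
Your proof is correct. The paper does not prove this statement at all --- it is quoted as a known result from the cited reference --- and your argument is precisely the classical one underlying that reference: assuming $G$ has no zero in the closed ball $\bar{B}_r$, apply Brouwer's fixed point theorem to the continuous map $w \mapsto -r\,G(w)/\|G(w)\|$, note the fixed point $w^{\ast}$ is forced onto the sphere $\|w^{\ast}\|=r$, and obtain the sign contradiction $r^{2}=(\Phi(w^{\ast}),w^{\ast})<0$. All steps (compactness and convexity of $\bar{B}_r$ in finite dimension, well-definedness and continuity of $\Phi$ under the contradiction hypothesis, and the applicability of the hypothesis $(G(w),w)>0$ exactly at $\|w\|=r$) are handled correctly, so there is nothing to add.
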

	\begin{thm}\label{T1.4}
		Suppose that hypotheses (H1), (H2), and (H3) hold. Then there exists a unique solution $u_{h}^{n}~(n\geq 1)$ to the problem \eqref{7-10-22-2}.
			\end{thm}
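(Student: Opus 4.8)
The plan is to argue by induction on the time level $n$, exploiting the fact that the scheme \eqref{7-10-22-2} is genuinely nonlinear only at the first step. For $n\geq 2$ the diffusion coefficient is frozen at $\bar u_h^{n-1}=2u_h^{n-1}-u_h^{n-2}$, which is already determined once $u_h^0,\dots,u_h^{n-1}$ have been constructed; hence the equation for $u_h^n$ is a \emph{linear} variational problem. Only the equation at $n=1$, where $M$ is evaluated at $\|\nabla u_h^1\|^2$, is truly nonlinear, and this is precisely where the Br\"ouwer-type fixed point result (Theorem \ref{T1.3}) is required. So I would treat the base case $n=1$ by a fixed point argument and the inductive step $n\geq 2$ by linear coercivity, establishing existence and uniqueness at each stage.

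For the base case, recalling $a_0=1$ so that $\mathbb{D}^\alpha_t u_h^1=\frac{k^{-\alpha}}{\Gamma(2-\alpha)}(u_h^1-u_h^0)$, I would use the Riesz representation theorem to define a continuous map $G:X_h\to X_h$ by
\[
(G(w),v_h):=\frac{k^{-\alpha}}{\Gamma(2-\alpha)}(w-u_h^0,v_h)+\left(M(x,t_1,\|\nabla w\|^2)\nabla w,\nabla v_h\right)-(f^1,v_h)-kB(t_1,t_0,u_h^0,v_h)
\]
for all $v_h\in X_h$; continuity of $G$ follows from the continuity of $M$ and finite-dimensionality of $X_h$. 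Taking $v_h=w$, invoking $M\geq m_0>0$ from (H2), the continuity bound \eqref{1.1a} for $B$, and Young's inequality to absorb the terms involving $(u_h^0,w)$, $(f^1,w)$ and $kB_0\|\nabla u_h^0\|\|\nabla w\|$, I expect an estimate of the form
\[
(G(w),w)\geq\left(\frac{k^{-\alpha}}{2\Gamma(2-\alpha)}-\frac12\right)\|w\|^2+\frac{m_0}{2}\|\nabla w\|^2-C,
\]
with $C$ depending only on the data $u_h^0,f^1$. For $k$ small the coefficient of $\|w\|^2$ is positive, so $(G(w),w)>0$ whenever $\|w\|=r$ with $r$ large enough; Theorem \ref{T1.3} then produces $\tilde w\in X_h$ with $G(\tilde w)=0$, i.e.\ a solution $u_h^1:=\tilde w$. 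Note this step needs no a priori bound, so there is no circularity with Lemma \ref{5.3}--\ref{5.3-1}.

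Uniqueness at $n=1$, and both existence and uniqueness for $n\geq 2$, I would obtain from coercivity. If $u,\tilde u$ both solve the $n=1$ equation, set $z=u-\tilde u$, take $v_h=z$, and split (suppressing the $(x,t_1)$ arguments)
\[
M(\|\nabla u\|^2)\nabla u-M(\|\nabla\tilde u\|^2)\nabla\tilde u=M(\|\nabla u\|^2)\nabla z+\left(M(\|\nabla u\|^2)-M(\|\nabla\tilde u\|^2)\right)\nabla\tilde u.
\]
Using the Lipschitz property of $M$ with $\big|\|\nabla u\|^2-\|\nabla\tilde u\|^2\big|\leq(\|\nabla u\|+\|\nabla\tilde u\|)\|\nabla z\|$ and the a priori gradient bound $\|\nabla u\|,\|\nabla\tilde u\|\lesssim K$ from Lemma \ref{5.3-1}, the cross term is controlled, giving
\[
\frac{k^{-\alpha}}{\Gamma(2-\alpha)}\|z\|^2+(m_0-4L_M K^2)\|\nabla z\|^2\leq 0,
\]
whence $z=0$ since $(m_0-4L_M K^2)>0$ by (H2). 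For $n\geq 2$, the problem is the linear variational equation $a(u_h^n,v_h)=F(v_h)$ with $a(w,v_h)=\frac{k^{-\alpha}}{\Gamma(2-\alpha)}(w,v_h)+\left(M(x,t_n,\|\nabla\bar u_h^{n-1}\|^2)\nabla w,\nabla v_h\right)$; this form is bounded and, by $M\geq m_0$ together with the Poincar\'e inequality, coercive on $X_h$, so Lax--Milgram (equivalently, invertibility of the symmetric positive definite mass-plus-stiffness matrix) yields a unique $u_h^n$.

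The main obstacle is the first step, as it is the only nonlinear equation and must be handled by the fixed point theorem rather than by linear coercivity. Within it, the delicate point is the uniqueness estimate: the quadratic dependence of $M$ on $\|\nabla u_h^1\|$ generates a gradient cross term that can be absorbed only by combining the a priori bound of Lemma \ref{5.3-1} with the structural smallness condition $m_0-4L_M K^2>0$ encoded in (H2). Once these positivity and coercivity inequalities are in place, all remaining manipulations are routine.
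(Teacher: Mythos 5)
Your proposal follows essentially the same route as the paper's proof: a Br\"ouwer-type fixed point argument (Theorem \ref{T1.3}) applied to the map $G$ for the nonlinear step $n=1$, uniqueness at $n=1$ via the splitting of $M(\|\nabla u\|^{2})\nabla u-M(\|\nabla\tilde u\|^{2})\nabla\tilde u$, the Lipschitz property of $M$, the gradient bound of Lemma \ref{5.3-1}, and the structural condition $m_{0}-4L_{M}K^{2}>0$ from (H2), and finally linearity with a positive definite coefficient matrix for $n\geq 2$. The only cosmetic difference is in the existence step: the paper obtains $\left(G(u_{h}^{1}),u_{h}^{1}\right)>0$ by factoring the terms and invoking the Poincar\'{e} inequality, with no smallness condition on $k$, whereas you absorb terms by Young's inequality and therefore need $k^{\alpha}<1/\Gamma(2-\alpha)$; both variants are valid.
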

		\begin{proof}
			\textbf{(Existence)} ~Take $n=1$ in the scheme \eqref{7-10-22-2} and  apply the definition of  $\mathbb{D}^{\alpha}_{t}u_{h}^{1}$ \eqref{2.9} with $\alpha_{0}=k^{\alpha}\Gamma(2-\alpha)$ to obtain 
			\begin{equation}\label{9-10-22-6}
			\begin{aligned}
			\left(u_{h}^{1}-u_{h}^{0},v_{h}\right)&+\alpha_{0}\left(M\left(x,t_{1},\|\nabla u_{h}^{1}\|^{2}\right)\nabla u_{h}^{1},\nabla v_{h}\right)=\alpha_{0}\left(f^{1},v_{h}\right)
			+\alpha_{0}kB\left(t_{1},t_{0},u_{h}^{0},v_{h}\right).
			\end{aligned}
			\end{equation}
			In the view of \eqref{9-10-22-6} we define a map $G:X_{h}\rightarrow X_{h}$ by 
			\begin{equation}\label{S1.19}
			\begin{aligned}
			\left(G\left(u_{h}^{1}\right),v_{h}\right)&=\left(u_{h}^{1},v_{h}\right)-\left(u_{h}^{0},v_{h}\right)+\alpha_{0}\left(M\left(x,t_{1},\|\nabla u_{h}^{1}\|^{2}\right)\nabla u_{h}^{1},\nabla v_{h}\right)\\
			&-\alpha_{0}\left(f^{1},v_{h}\right)-\alpha_{0}kB\left(t_{1},t_{0},u_{h}^{0},v_{h}\right).
			\end{aligned}
			\end{equation}
			Then using (H2), (H3), and Cauchy-Schwarz inequality and Poincar\'{e} inequality with Poincar\'{e} constant $C_{p}$, we have 
			\begin{equation}
			\begin{aligned}
			\left(G\left(u_{h}^{1}\right),u_{h}^{1}\right)&\geq \| u_{h}^{1}\|^{2}-\|u_{h}^{0}\|\|u_{h}^{1}\|-\alpha_{0}\|f^{1}\|\|u_{h}^{1}\|+\alpha_{0}m_{0}\|\nabla u_{h}^{1}\|^{2}-\alpha_{0}kB_{0}\|\nabla u_{h}^{0}\|\|\nabla u_{h}^{1}\|\\
			&\geq \|u_{h}^{1}\|\left(\|u_{h}^{1}\|-\|u_{h}^{0}\|-\alpha_{0}\|f^{1}\|\right)+\alpha_{0}\|\nabla u_{h}^{1}\|\left(m_{0}\|\nabla u_{h}^{1}\|-kB_{0} \|\nabla u_{h}^{0}\|\right)\\
			&\geq \|u_{h}^{1}\|\left(\|u_{h}^{1}\|-\|u_{h}^{0}\|-\alpha_{0}\|f_{h}^{1}\|\right)+\alpha_{0}\|\nabla u_{h}^{1}\|\left(\| u_{h}^{1}\|-\frac{kB_{0}C_{p}}{m_{0}} \|\nabla u_{h}^{0}\|\right).
			\end{aligned}
			\end{equation}
			Thus, for $\|u_{h}^{1}\|> \|u_{h}^{0}\|+\alpha_{0}\|f^{1}\|+\frac{kB_{0}C_{p}}{m_{0}}\|\nabla u_{h}^{0}\|$ one  have $\left(G\left(u_{h}^{1}\right),u_{h}^{1}\right)>0$ and the map $G$ defined by \eqref{S1.19} is continuous as a consequence of  continuity of $M$ and $B$. Hence existence of $u_{h}^{1}$ follows by  Theorem \ref{T1.3} immediately.\\ 
	\noindent 	\textbf{(Uniqueness)} Suppose that $X_{h}^{1}$ and $Y_{h}^{1}$ are solutions of the scheme  \eqref{7-10-22-2} for $n=1$, then $Z_{h}^{1}=X_{h}^{1}-Y_{h}^{1}$ satisfies the following equation for all $v_{h}$ in $X_{h}$
			\begin{equation}\label{S1.22}
			\begin{aligned}
			\left(\mathbb{D}^{\alpha}_{t}Z_{h}^{1},v_{h}\right)&+\left(M\left(x,t_{1},\|\nabla X_{h}^{1}\|^{2}\right)\nabla Z_{h}^{1},\nabla v_{h}\right)\\
			&=\left(\left[M\left(x,t_{1},\|\nabla Y_{h}^{1}\|^{2}\right)-M\left(x,t_{1},\|\nabla X_{h}^{1}\|^{2}\right)\right]\nabla Y_{h}^{1},\nabla v_{h}\right).
			\end{aligned}
			\end{equation}
			Put $v_{h}=Z_{h}^{1}$ in  \eqref{S1.22} and using (H2) we get
			\begin{equation*}
			\frac{1}{2}\mathbb{D}^{\alpha}_{t}\|Z_{h}^{1}\|^{2}+m_{0}\|\nabla Z_{h}^{1}\|^{2}\leq L_{M}\|\nabla Z_{h}^{1}\|\left(\|\nabla X_{h}^{1}\|+\|\nabla Y_{h}^{1}\|\right)\left(\nabla Y_{h}^{1},\nabla Z_{h}^{1}\right)
			\end{equation*}
			 Cauchy-Schwarz inequality and a priori bound \eqref{S1.24-1} yield
			\begin{equation*}
			\|Z_{h}^{1}\|^{2}+k^{\alpha}\left(2m_{0}-4L_{M}K^{2}\right)\|\nabla Z_{h}^{1}\|^{2}\leq 0.
			\end{equation*}
			At last, employ (H2) to obtain $\|Z_{h}^{1}\|=\|\nabla Z_{h}^{1}\|=0$ that  concludes the uniqueness of solution for $n=1$ in the scheme \eqref{7-10-22-2}.
\par  For $n\geq 2$, the numerical scheme \eqref{7-10-22-2} is linear with a positive definite coefficient matrix as a result we get the  existence and uniqueness of the  solution $u_{h}^{n}~(n\geq 2)$ for the problem \eqref{7-10-22-2}.
\end{proof}
\par To derive the convergence rate of  developed numerical scheme \eqref{7-10-22-2}, first we discuss  aproximation properties of L1 scheme \eqref{2.9}, linearization technique \eqref{7-10-22-1}, and quadrature error \eqref{S1.14}. 
\begin{lem}\label{9-10-22-12}\cite{lin2007finite} If $u\in C^{2}([0,T];L^{2}(\Omega))$ then truncation error $\mathbb{Q}^{n}$ defined in \eqref{2.9} satisfies 
\begin{equation}\label{9-10-22-7}
    \|\mathbb{Q}^{n}\|\lesssim k^{2-\alpha}~\text{for}~n~\geq~1.
\end{equation}
\begin{lem}\label{9-10-22-11} For any function $u\in C^{2}[0,T]$, the linearization error $(u^{n}-\bar{u}^{n-1}):=\bar{\mathbb{L}}^{n-1}$ defined in \eqref{7-10-22-1} undergoes 
\begin{equation}\label{9-10-22-8}
    |\bar{\mathbb{L}}^{n-1}|\lesssim k^{2}~\text{for}~n\geq 2.
\end{equation}
\end{lem}
\begin{proof}Apply the Taylor's series expansion of $u^{n}$ around $u^{n-1}$ and $u^{n-2}$  to obtain 
\begin{equation}
    |\bar{\mathbb{L}}^{n-1}|\lesssim k^{2}(u_{tt}(\xi_{1})+u_{tt}(\xi_{2}))~\text{for some}~\xi_{1}~\in~ (t_{n-1},t_{n})~\text{and for some}~\xi_{2}~\in~(t_{n-2},t_{n}).
\end{equation}
As $u \in C^{2}[0,T]$ that implies the result \eqref{9-10-22-8}.
\end{proof}
\end{lem}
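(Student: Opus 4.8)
The plan is to prove the bound by a direct application of Taylor's theorem with the Lagrange form of the remainder, exploiting the fact that $\bar{u}^{n-1}=2u^{n-1}-u^{n-2}$ from \eqref{7-10-22-1} is precisely the linear extrapolation of $u$ to the node $t_{n}$ based on the two preceding nodes $t_{n-1}$ and $t_{n-2}$. Writing out the definition, the linearization error is the symmetric second difference
\begin{equation*}
\bar{\mathbb{L}}^{n-1}=u^{n}-\bar{u}^{n-1}=u^{n}-2u^{n-1}+u^{n-2},
\end{equation*}
so the task reduces to bounding a second-order divided difference of a $C^{2}$ function on a uniform mesh of width $k$ (recall $t_{j}=jk$).

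First I would expand $u^{n}=u(t_{n-1}+k)$ and $u^{n-2}=u(t_{n-1}-k)$ in Taylor series about the central node $t_{n-1}$, each to first order with a Lagrange remainder. Since $u\in C^{2}[0,T]$, Taylor's theorem furnishes points $\xi_{1}\in(t_{n-1},t_{n})$ and $\xi_{2}\in(t_{n-2},t_{n-1})$ for which
\begin{equation*}
u^{n}=u^{n-1}+k\,u_{t}(t_{n-1})+\tfrac{k^{2}}{2}u_{tt}(\xi_{1}),\qquad
u^{n-2}=u^{n-1}-k\,u_{t}(t_{n-1})+\tfrac{k^{2}}{2}u_{tt}(\xi_{2}).
\end{equation*}
Adding these two identities and subtracting $2u^{n-1}$, the constant terms combine and the first-order terms $\pm k\,u_{t}(t_{n-1})$ cancel exactly, which is the crucial step that isolates the remainder:
\begin{equation*}
\bar{\mathbb{L}}^{n-1}=\tfrac{k^{2}}{2}\big(u_{tt}(\xi_{1})+u_{tt}(\xi_{2})\big).
\end{equation*}

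To finish, I would take absolute values and invoke the hypothesis $u\in C^{2}[0,T]$: the second derivative $u_{tt}$ is continuous on the compact interval $[0,T]$ and hence attains a finite maximum $C_{0}:=\max_{t\in[0,T]}|u_{tt}(t)|$. Therefore $|\bar{\mathbb{L}}^{n-1}|\le k^{2}C_{0}\lesssim k^{2}$ uniformly in $n\ge 2$, which is exactly \eqref{9-10-22-8}. There is no genuine analytical obstacle here; the proof is routine. The only points meriting care are that the exact cancellation of the first-order terms hinges on the extrapolation weights $(2,-1)$ being the unique ones reproducing constants and linear functions, and that the remainder must be kept at the $C^{2}$ level (a Lagrange remainder) rather than assuming extra smoothness that the hypothesis does not grant—so $O(k^{2})$ is the best rate obtainable this way. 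I would also remark that $\xi_{2}$ may be stated in the looser interval $(t_{n-2},t_{n})$ without affecting the estimate, since only the uniform bound $C_{0}$ on $u_{tt}$ is used.
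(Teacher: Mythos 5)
Your proof is correct and follows essentially the same route as the paper: a direct Taylor expansion with Lagrange remainder, cancellation of the zeroth- and first-order terms by the extrapolation weights $(2,-1)$, and a uniform bound on $u_{tt}$ over the compact interval $[0,T]$. The only (immaterial) difference is that you expand symmetrically about the central node $t_{n-1}$, which yields the clean identity $\bar{\mathbb{L}}^{n-1}=\tfrac{k^{2}}{2}\bigl(u_{tt}(\xi_{1})+u_{tt}(\xi_{2})\bigr)$ with $\xi_{2}\in(t_{n-2},t_{n-1})$, whereas the paper's remainder points sit in $(t_{n-1},t_{n})$ and $(t_{n-2},t_{n})$, consistent with expanding the two lagged values about $t_{n}$; both give the same $O(k^{2})$ bound.
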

\begin{lem}\label{9-10-22-10}\cite{pani1992numerical} If $u\in C^{4}[0,T]$ then quadrature error defined by \eqref{S1.14} has the following error estimate 
\begin{equation}\label{9-10-22-9}
    |q^{n}(u)|\lesssim k^{2}~\text{for}~n~\geq ~1. 
\end{equation}
\end{lem}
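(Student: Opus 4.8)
The plan is to recognise \eqref{S1.14} as a \emph{composite} rule built from three pieces on a mixed mesh, and to estimate the quadrature error on each piece with the classical per-interval formulas. Writing $\tau:=\bar t^{n}_{j_n}=j_nk_1$ for the last node of the coarse (Simpson) mesh, I would split
\begin{equation*}
\int_0^{t_n}u(s)\,ds=\int_0^{\tau}u(s)\,ds+\int_{\tau}^{t_{n-1}}u(s)\,ds+\int_{t_{n-1}}^{t_n}u(s)\,ds,
\end{equation*}
so that the three sums appearing in \eqref{S1.14} are respectively the composite Simpson rule on $[0,\tau]$ with step $k_1$, the composite trapezoidal rule on $[\tau,t_{n-1}]$ with step $k$, and the single left-rectangle value $k\,u(t_{n-1})$ approximating $\int_{t_{n-1}}^{t_n}u\,ds$. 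Consequently $q^n(u)$ is the sum of the three corresponding errors, and it suffices to show that each is $O(k^2)$ uniformly in $n$.

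The key calibration is that of the coarse step. Since $m_1=[k^{-1/2}]\le k^{-1/2}$ we get $k_1=m_1k\le k^{1/2}$, hence $k_1^4\lesssim k^2$. On each pair of adjacent coarse subintervals $[\bar t^n_{2j-2},\bar t^n_{2j}]$ the local Simpson estimate gives an error $\lesssim k_1^5\,|u^{(4)}(\eta_j)|$; summing over the $j_n/2$ pairs, whose total length is $\tau\le T$ so that $j_n/2\le T/(2k_1)$, and using boundedness of $u^{(4)}$ on $[0,T]$, the Simpson contribution is bounded by $\lesssim (T/k_1)\,k_1^5=T\,k_1^4\lesssim k^2$. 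This is the dominant term.

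For the trapezoidal piece I would use the local bound $\lesssim k^3\,|u''(\zeta_j)|$ on each interval of width $k$. Because $j_n$ is the largest even index with $\bar t^n_{j_n}<t_n$, maximality forces $t_n-\tau\le 2k_1$, so the number of fine intervals satisfies $J_n-j_n=(t_{n-1}-\tau)/k\le 2k_1/k\le 2k^{-1/2}$. The trapezoidal error is therefore $\lesssim k^{-1/2}\cdot k^3=k^{5/2}$, which is subdominant. Finally, a one-step Taylor expansion gives $\big|\int_{t_{n-1}}^{t_n}u\,ds-k\,u(t_{n-1})\big|\lesssim k^2\,|u'(\xi)|\lesssim k^2$ for the rectangle term. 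Adding the three contributions yields $|q^n(u)|\lesssim k^2+k^{5/2}+k^2\lesssim k^2$, as claimed.

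The computation itself is routine once the classical Simpson/trapezoidal/rectangle per-interval error formulas and the $C^4$-boundedness of $u$ are invoked. The load-bearing observation — which I would emphasise as the heart of the argument — is that the mesh is engineered precisely so that the coarse step obeys $k_1\approx k^{1/2}$ and hence $k_1^4\approx k^2$: this is exactly what drags the fourth-order-accurate Simpson error down to $O(k^2)$ while using only $O(k^{-1/2})$ coarse nodes, and simultaneously guarantees that the leftover fine-mesh strip has length $O(k^{1/2})$, so that its trapezoidal error stays below $O(k^2)$. Achieving this balance, rather than any individual per-interval estimate, is the main point.
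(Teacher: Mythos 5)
Your proof is correct, and it is worth noting that the paper itself gives no argument for this lemma at all: it is stated as a citation to the reference \cite{pani1992numerical} (Pani--Thom\'ee--Wahlbin), where the modified Simpson rule is constructed and analysed. What you have written is essentially the standard argument behind that citation, reconstructed correctly: the splitting of \eqref{S1.14} into a composite Simpson rule with coarse step $k_{1}$ on $[0,\bar t^{\,n}_{j_n}]$, a composite trapezoidal rule with fine step $k$ on $[\bar t^{\,n}_{j_n},t_{n-1}]$, and a left rectangle on $[t_{n-1},t_n]$, followed by the per-interval error bounds. Your two structural observations are exactly the load-bearing ones in the cited work: first, the calibration $m_{1}=[k^{-1/2}]$ gives $k_{1}\leq k^{1/2}$, so the $O(k_{1}^{4})$ composite Simpson error becomes $O(k^{2})$ while the coarse mesh keeps the number of stored history values at $O(k^{-1/2})$; second, maximality of the even index $j_{n}$ gives $t_{n}-\bar t^{\,n}_{j_n}\leq 2k_{1}$, so the trapezoidal strip has only $O(k^{-1/2})$ fine intervals and contributes $O(k^{5/2})$, subdominant to the $O(k^{2})$ Simpson and rectangle terms. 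One cosmetic point: your argument tacitly assumes $k\leq 1$ so that $m_{1}\geq 1$ and the coarse mesh is nonempty, and it silently covers the degenerate small-$n$ cases (e.g.\ $n=1$, where the Simpson and trapezoidal sums are empty); both are harmless but worth a sentence if this were to be written out in full.
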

We prove the convergence estimate of the proposed  numerical scheme \eqref{7-10-22-2} by assuming that the solution $u$ of the problem \eqref{6-10-22-1} satisfy the regularity assumption used in Lemma \ref{9-10-22-12}  to Lemma \ref{9-10-22-10}, i.e.,  $u \in C^{4}([0,T];H^{2}(\Omega)\cap H^{1}_{0}(\Omega))$.
	\begin{subsection}{Proof of the Theorem \ref{J1}}
	\begin{proof} 	First we prove the error estimate for the case $n=1$. Substitute $u_{h}^{1}=W^{1}-\theta^{1}$ for $n=1$ in  the scheme \eqref{7-10-22-2} and using weak formulation \eqref{6-10-22-3}  along with modified Ritz-Volterra projection operator $W$ at $t_{1}$ to  get 
		\begin{equation}\label{S1.271}
		\begin{aligned}
		\left(\mathbb{D}^{\alpha}_{t}\theta^{1},v_{h}\right)&+\left(M\left(x,t_{1},\|\nabla u_{h}^{1}\|^{2}\right)\nabla \theta^{1},\nabla v_{h}\right)\\
		&=\left(\mathbb{D}^{\alpha}_{t}W^{1}-~^{C}D^{\alpha}_{t_{1}}u,v_{h}\right)-kB\left(t_{1},t_{0},W^{0},v_{h}\right)+\int_{0}^{t_{1}}B(t_{1},s,W(s),v_{h})ds\\
		&+\left(\left[M\left(x,t_{1},\|\nabla u_{h}^{1}\|^{2}\right)-M\left(x,t_{1},\|\nabla u^{1}\|^{2}\right)\right]\nabla W^{1},\nabla v_{h}\right)\\
		&+kB\left(t_{1},t_{0},\theta^{0},v_{h}\right).
		\end{aligned}
		\end{equation}
		Set $v_{h}=\theta^{1}$ in \eqref{S1.271} with  $\theta^{0}=0$ and using (H2), (H3) together with Cauchy-Schwarz inequality and Young's inequality to obtain
	 \begin{equation}\label{S1}
	 \begin{aligned}
	 (1-k^{\alpha}\Gamma(2-\alpha))\|\theta^{1}\|^{2}+&k^{\alpha}\left(m_{0}-4L_{M}K^{2}\right)\|\nabla \theta^{1}\|^{2}\\
	 &\lesssim k^{\alpha}\|\mathbb{Q}^{1}\|^{2}+k^{\alpha}\|~^{C}{D}^{\alpha}_{t_{1}}\rho\|^{2}+k^{\alpha}\|\nabla q^{1}(W)\|^{2}+k^{\alpha}\|\nabla \rho^{1}\|^{2}. 
	 \end{aligned}
	 \end{equation}
For sufficiently small $k^{\alpha}< \frac{1}{\Gamma(2-\alpha)}$, we apply (H2) and the approximation properties \eqref{9-10-22-7}, \eqref{8-10-22-17}, \eqref{11-10-22-51}, and \eqref{9-10-22-9} to conclude 
		\begin{equation}
		\|\theta^{1}\|^{2}+k^{\alpha}\|\nabla \theta^{1}\|^{2}\lesssim \left(k^{2-\alpha}+h\right)^{2}.
		\end{equation}
Now we derive the error estimate for $n\geq 2$, for that take  $u_{h}^{n}=W^{n}-\theta^{n}$ in the scheme  \eqref{7-10-22-2} 
 \begin{equation}\label{10-10-22-7-1-1}
    \begin{aligned}
\left(\mathbb{D}^{\alpha}_{t_{n}}\theta^{n},v_{h}\right)&+\left(M\left(x,t_{n},\|\nabla \bar{u}_{h}^{n-1}\|^{2}\right)\nabla \theta^{n},\nabla v_{h}\right)\\
&=\left(\mathbb{Q}^{n},v_{h}\right)-\left(^{C}D^{\alpha}_{t_{n}}\rho,v_{h}\right)\\
&+\left((M(x,t_{n},\|\nabla \bar{u}_{h}^{n-1}\|^{2})-M(x,t_{n},\|\nabla u^{n}\|^{2}))\nabla W^{n},\nabla v_{h}\right)\\
&+\left(\nabla q^{n}(W),\nabla v_{h}\right)+\sum_{j=1}^{n-1}w_{nj}B(t_{n},t_{j},\theta^{j},v_{h}).
\end{aligned}
\end{equation}
Put $v_{h}=\theta^{n}$ in \eqref{10-10-22-7-1-1} it follows
		\begin{equation}\label{S1.33}
		\begin{aligned}
		\mathbb{D}^{\alpha}_{t_{n}}\|\theta^{n}\|^{2}+\|\nabla \theta^{n}\|^{2}&\lesssim \|\mathbb{Q}^{n}\|^{2}+\|~^{C}D_{t_{n}}^{\alpha}\rho\|^{2}+\| \theta^{n}\|^{2}+\|\nabla \bar{\rho}^{n-1}\|^{2}+\|\nabla \bar{\theta}^{n-1}\|^{2}\\
		&+\|\nabla \bar{\mathbb{L}}^{n-1}\|^{2}+\|\nabla q^{n}(W)\|^{2}+\sum_{j=1}^{n-1}w_{nj}\|\nabla \theta^{j}\|^{2}.
		\end{aligned}
		\end{equation}
		Employ the approximation properties \eqref{9-10-22-7}, \eqref{8-10-22-17}, \eqref{11-10-22-51}, \eqref{9-10-22-8},  and \eqref{9-10-22-9} to deduce
		\begin{equation}\label{S1.37}
		\begin{aligned}
			\mathbb{D}^{\alpha}_{t_{n}}\|\theta^{n}\|^{2}+\|\nabla \theta^{n}\|^{2}&\lesssim \left(k^{2-\alpha}+h^{2}+h+k^{2}+k^{2}\right)^{2}+\|\theta^{n}\|^{2}\\
		&+\left(\|\nabla \theta^{n-1} \|^{2}+\|\nabla \theta^{n-2}\|^{2}+\sum_{j=1}^{n-1}k_{1}\|\nabla \theta^{j}\|^{2}\right).
		\end{aligned}
		\end{equation}
		Now follow the similar arguments as we prove estimate \eqref{S1.24} to conclude the result \eqref{11-10-22-49}.
	\end{proof}
\end{subsection}
\end{section}

\section{Linearized L2-1$_{\sigma}$ Galerkin scheme}\label{11-10-22-57}
In this section, we show that proposed numerical scheme \eqref{7-10-22-6} achieve the second order  convergence in the time direction. 

\begin{lem}\label{6.1}  Under the hypotheses (H1), (H2), and (H3) the solution $u_{h}^{n}~(n\geq 1)$ of the scheme \eqref{7-10-22-6} satisfy the following a priori bound
		\begin{equation}\label{S2.24}
			\begin{aligned}
			\max_{1\leq m \leq N}\|u_{h}^{m}\|^{2}+k^{\alpha}\sum_{n=1}^{N}\tilde{p}_{N-n}^{(N)}\|\nabla u_{h}^{n}\|^{2}
			\lesssim \max_{1\leq n \leq N}\|f^{n-\frac{\alpha}{2}}\|^{2}+\|\nabla u_{0}\|^{2}.
			\end{aligned}
			\end{equation}
	\end{lem}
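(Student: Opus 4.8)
The plan is to mirror the energy argument used for the linearized L1 scheme in Lemma~\ref{5.3}, with the L1 operator $\mathbb{D}^{\alpha}_{t}$ and its kernels $(a_i,p_n)$ replaced throughout by the L2-1$_\sigma$ operator $\tilde{\mathbb{D}}^{\alpha}_{t_{n-\sigma}}$ of \eqref{1.20} and the companion kernels $\tilde{c}^{(n)}_j,\tilde{p}^{(n)}_j$ of Lemma~\ref{L1.21}. The one genuinely new structural decision is the choice of test function: since the diffusion term in \eqref{7-10-22-6} carries the Crank--Nicolson average $\hat{u}_h^{n,\sigma}=(1-\sigma)u_h^n+\sigma u_h^{n-1}$ from \eqref{7-10-22-5} rather than $u_h^n$, I would test the scheme against $v_h=\hat{u}_h^{n,\sigma}$. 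This renders the diffusion term $\big(M(x,t_{n-\sigma},\|\nabla\bar{u}_h^{n-1,\sigma}\|^2)\nabla\hat{u}_h^{n,\sigma},\nabla\hat{u}_h^{n,\sigma}\big)\geq m_0\|\nabla\hat{u}_h^{n,\sigma}\|^2$ nonnegative via (H2), and simultaneously lets me invoke Alikhanov's discrete positivity identity $\big(\tilde{\mathbb{D}}^{\alpha}_{t_{n-\sigma}}u_h^n,\hat{u}_h^{n,\sigma}\big)\geq \tfrac12\,\tilde{\mathbb{D}}^{\alpha}_{t_{n-\sigma}}\|u_h^n\|^2$ from \cite{alikhanov2015new}, the exact analogue of the identity that drove Lemma~\ref{5.3}.

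First I would dispose of $n=1$ separately, where $\tilde{c}^{(1)}_0=(1-\sigma)^{1-\alpha}$ and the memory reduces to one term, to get $\|u_h^1\|^2+k^\alpha\|\nabla\hat{u}_h^{1,\sigma}\|^2\lesssim \|f^{1-\sigma}\|^2+\|\nabla u_0\|^2$ under $k^\alpha<1/\Gamma(2-\alpha)$. For $n\geq2$, combining the two positivity facts with the continuity bound \eqref{1.1a} applied to $\sum_{j=1}^{n-1}\tilde{w}_{nj}B(t_{n-\sigma},t_j,u_h^j,\hat{u}_h^{n,\sigma})$, and Cauchy--Schwarz with Young on $(f^{n-\sigma},\hat{u}_h^{n,\sigma})$, yields the per-step inequality
\begin{equation*}
\tilde{\mathbb{D}}^{\alpha}_{t_{n-\sigma}}\|u_h^n\|^2+\|\nabla\hat{u}_h^{n,\sigma}\|^2\lesssim \|f^{n-\sigma}\|^2+\|u_h^n\|^2+\sum_{j=1}^{n-1}\tilde{w}_{nj}\|\nabla u_h^j\|^2 .
\end{equation*}
Next I would invert the discrete fractional difference exactly as in the passage following \eqref{S1.23}: multiply by $\tilde{p}^{(m)}_{m-n}$, sum over $n=1,\dots,m$, interchange the summations, and apply the kernel identity \eqref{2.812} to telescope $\sum_{n}\tilde{p}^{(m)}_{m-n}\sum_{j}\tilde{c}^{(n)}_{n-j}\big(\|u_h^j\|^2-\|u_h^{j-1}\|^2\big)$ down to $\|u_h^m\|^2-\|u_h^0\|^2$. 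The bound \eqref{2.914} controls the weights through $k^\alpha\sum_{n=1}^m\tilde{p}^{(m)}_{m-n}\lesssim k^\alpha m^\alpha\lesssim T^\alpha$, which both collapses the forcing contribution to $\max_n\|f^{n-\sigma}\|^2$ and keeps the Grönwall exponent finite; after absorbing the small factor via $k^\alpha<1/\Gamma(2-\alpha)$, the discrete Grönwall inequality closes the estimate, with $\|\nabla\hat{u}_h^{n,\sigma}\|^2$ in place of $\|\nabla u_h^n\|^2$.

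The main obstacle, and the last step, is converting the $\tilde{p}$-weighted sum of $\|\nabla\hat{u}_h^{n,\sigma}\|^2$ into the target sum of $\|\nabla u_h^n\|^2$ in \eqref{S2.24}. Writing $(1-\sigma)\nabla u_h^n=\nabla\hat{u}_h^{n,\sigma}-\sigma\nabla u_h^{n-1}$ and using $1-\sigma=1-\tfrac{\alpha}{2}>\tfrac12$ gives $\|\nabla u_h^n\|^2\lesssim \|\nabla\hat{u}_h^{n,\sigma}\|^2+\|\nabla u_h^{n-1}\|^2$; summing against $\tilde{p}^{(m)}_{m-n}$ produces a shifted self-referential term that I expect to absorb by a further application of the discrete Grönwall inequality or an induction on $m$. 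I would also need to check carefully that Alikhanov's positivity identity and the telescoping in \eqref{2.812} remain valid for the genuinely $n$-dependent kernel $\tilde{c}^{(n)}_j$: unlike the convolution kernel $(a_i)$ of the L1 scheme, these weights vary with the row index $n$, so the discrete integration-by-parts is the most delicate bookkeeping in the proof and is where I would expect the argument to require the most care.
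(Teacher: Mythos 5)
Your overall architecture (per-step energy inequality, inversion of the discrete fractional difference by multiplying with $\tilde{p}^{(m)}_{m-n}$ and telescoping via \eqref{2.812}, weight control via \eqref{2.914}, discrete Gr\"{o}nwall) is exactly the paper's, and your per-step inequality is correctly derived. The structural difference is the test function. The paper tests \eqref{7-10-22-6} with $v_{h}=u_{h}^{n}$ (see \eqref{10-10-22-6}): it splits $\left(M\nabla \hat{u}_{h}^{n,\sigma},\nabla u_{h}^{n}\right)=(1-\sigma)\left(M\nabla u_{h}^{n},\nabla u_{h}^{n}\right)+\sigma\left(M\nabla u_{h}^{n-1},\nabla u_{h}^{n}\right)$, keeps the coercive first piece on the left, moves the second to the right, and uses only the simpler inequality $\left(\tilde{\mathbb{D}}^{\alpha}_{t_{n-\sigma}}u_{h}^{n},u_{h}^{n}\right)\geq \frac{1}{2}\tilde{\mathbb{D}}^{\alpha}_{t_{n-\sigma}}\|u_{h}^{n}\|^{2}$ rather than Alikhanov's $\hat{u}$-identity. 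This places the quantity $\|\nabla u_{h}^{n}\|^{2}$ appearing in \eqref{S2.24} directly on the left of the per-step bound \eqref{S2.23}, at the price of a single history term $\|\nabla u_{h}^{n-1}\|^{2}$ on the right, so the stated estimate follows from the same kernel-inversion/Gr\"{o}nwall machinery as in Lemma \ref{5.3} without any post-processing.

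The genuine gap in your proposal is the step you yourself flag: converting $k^{\alpha}\sum_{n}\tilde{p}^{(m)}_{m-n}\|\nabla \hat{u}_{h}^{n,\sigma}\|^{2}$ into $k^{\alpha}\sum_{n}\tilde{p}^{(m)}_{m-n}\|\nabla u_{h}^{n}\|^{2}$, and neither of your suggested fixes closes it. Writing $\|\nabla u_{h}^{n}\|^{2}\lesssim \|\nabla \hat{u}_{h}^{n,\sigma}\|^{2}+\|\nabla u_{h}^{n-1}\|^{2}$ produces the shifted sum $\sum_{n}\tilde{p}^{(m)}_{m-n}\|\nabla u_{h}^{n-1}\|^{2}$ with $O(1)$ coefficients. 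Treating it as a Gr\"{o}nwall history term means every earlier level enters with an $O(1)$ coefficient (not a small one like $k^{\alpha}\tilde{p}^{(m)}_{m-n}+k_{1}$ as in Lemma \ref{5.3}), so the Gr\"{o}nwall factor is $\exp(Cm)=\exp(CT/k)$, which is useless as $k\to 0$. Absorbing it into the left-hand side instead requires the adjacent-index comparability $\tilde{p}^{(m)}_{i-1}\lesssim \tilde{p}^{(m)}_{i}$; this is true (these kernels decay only algebraically, like $(1+i)^{\alpha-1}$), but it is not among the properties listed in Lemma \ref{L1.21} and you neither state nor prove it, and a plain induction on $m$ fails for the same reason (the weights $\tilde{p}^{(m)}_{\cdot}$ change with $m$). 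Moreover, the conversion is needed not only at the end but already inside your Gr\"{o}nwall closure: your per-step right-hand side carries the memory history $\sum_{j}\tilde{w}_{nj}\|\nabla u_{h}^{j}\|^{2}$ in terms of $u$-gradients, while your candidate Gr\"{o}nwall quantity controls only $\hat{u}$-gradients, so the loop does not close "with $\|\nabla \hat{u}_{h}^{n,\sigma}\|^{2}$ in place of $\|\nabla u_{h}^{n}\|^{2}$" as claimed. (In fairness, the paper's own $\|\nabla u_{h}^{n-1}\|^{2}$ term in \eqref{S2.23} needs the same kernel comparability when it says "follow the similar arguments as \eqref{S1.24}"; but with the paper's choice of test function this is the only place the issue arises, whereas your route makes it pervasive.)
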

	\begin{proof} For $n=1$ the scheme  \eqref{7-10-22-6} is 
	\begin{equation}\label{10-10-22-3}
\begin{aligned}
\left(\tilde{\mathbb{D}}^{\alpha}_{t_{1-\sigma}}u_{h}^{1},v_{h}\right)+\left(M\left(x,t_{1-\sigma},\|\nabla \hat{u}_{h}^{1,\sigma}\|^{2}\right)\nabla \hat{u}_{h}^{1,\sigma},\nabla v_{h}\right)&=\left(1-\sigma\right)kB\left(t_{1-\sigma},t_{0},u_{h}^{0},v_{h}\right)\\
&+\left(f^{1-\sigma},v_{h}\right).
\end{aligned}
\end{equation}
Substitute $v_{h}=u_{h}^{1}$ in \eqref{10-10-22-3} to get 
	\begin{equation}\label{10-10-22-2}
\begin{aligned}
\frac{k^{-\alpha}(1-\sigma)^{1-\alpha}}{\Gamma(2-\alpha)}(u_{h}^{1}-u_{h}^{0},u_{h}^{1})&+(1-\sigma)\left(M\left(x,t_{1-\sigma},\|\nabla \hat{u}_{h}^{1,\sigma}\|^{2}\right)\nabla u_{h}^{1},\nabla u_{h}^{1}\right)\\
&=-\sigma\left(M\left(x,t_{1-\sigma},\|\nabla \hat{u}_{h}^{1,\sigma}\|^{2}\right)\nabla u_{h}^{0},\nabla u_{h}^{1}\right)\\
&+\left(1-\sigma\right)kB\left(t_{1-\sigma},t_{0},u_{h}^{0},u_{h}^{1}\right)+\left(f^{1-\sigma},u_{h}^{1}\right).
\end{aligned}
\end{equation}
Simplification of \eqref{10-10-22-2} using (H2) and (H3) with $\alpha_{0}=k^{\alpha}\Gamma(2-\alpha)$ and $\tilde{a}_{0}=(1-\sigma)^{1-\alpha}$  yields
	\begin{equation}\label{10-10-22-35}
\begin{aligned}
\left(1-\frac{\alpha_{0}}{\tilde{a}_{0}}\right)\|u_{h}^{1}\|^{2}+k^{\alpha}\|\nabla  u_{h}^{1}\|^{2}
&\lesssim k^{\alpha}\|\nabla u_{h}^{0}\|^{2}+\|u_{h}^{0}\|^{2}
+k^{2+\alpha}\|\nabla u_{h}^{0}\|^{2}+k^{\alpha}\|f^{1-\sigma}\|^{2}.
\end{aligned}
\end{equation}
Take sufficiently small $k$ to conclude 
\begin{equation}\label{10-10-22-4}
\begin{aligned}
\|u_{h}^{1}\|^{2}+k^{\alpha}\|\nabla  u_{h}^{1}\|^{2}
\lesssim \|\nabla u_{h}^{0}\|^{2} +\|f^{1-\sigma}\|^{2} \lesssim \|\nabla u_{0}\|^{2}+\|f^{1-\sigma}\|^{2}.
\end{aligned}
\end{equation}
For $u_{h}^{n}~( n\geq 2)$ in the scheme \eqref{7-10-22-6} to have 
\begin{equation}\label{10-10-22-5}
\begin{aligned}
\left(\tilde{\mathbb{D}}^{\alpha}_{t_{n-\sigma}}u_{h}^{n},v_{h}\right)+\left(M\left(x,t_{n-\sigma},\|\nabla \bar{u}_{h}^{n-1,\sigma}\|^{2}\right)\nabla \hat{u}_{h}^{n,\sigma},\nabla v_{h}\right)&=\sum_{j=1}^{n-1}\tilde{w}_{nj}B\left(t_{n-\sigma},t_{j},u_{h}^{j},v_{h}\right)\\
&+\left(f^{n-\sigma},v_{h}\right).
\end{aligned}
\end{equation}
		Put $v_{h}=u_{h}^{n}$ in \eqref{10-10-22-5} to obtain
		\begin{equation}\label{10-10-22-6}
\begin{aligned}
\left(\tilde{\mathbb{D}}^{\alpha}_{t_{n-\sigma}}u_{h}^{n},u_{h}^{n}\right)&+(1-\sigma)\left(M\left(x,t_{n-\sigma},\|\nabla \bar{u}_{h}^{n-1,\sigma}\|^{2}\right)\nabla u_{h}^{n},\nabla u_{h}^{n}\right)\\
&=\sum_{j=1}^{n-1}\tilde{w}_{nj}B\left(t_{n-\sigma},t_{j},u_{h}^{j},u_{h}^{n}\right)+\left(f^{n-\sigma},u_{h}^{n}\right)\\
&-\sigma\left(M\left(x,t_{n-\sigma},\|\nabla \bar{u}_{h}^{n-1,\sigma}\|^{2}\right)\nabla u_{h}^{n-1},\nabla u_{h}^{n}\right).
\end{aligned}
\end{equation}
We invoke the identity $\left(\tilde{\mathbb{D}}^{\alpha}_{t_{n-\sigma}}u_{h}^{n},u_{h}^{n}\right)\geq \frac{1}{2}\tilde{\mathbb{D}}^{\alpha}_{t_{n-\sigma}}\|u_{h}^{n}\|^{2}$ and apply (H2), (H3) along with Cauchy-Schwarz and Young's inequality to reach at
		\begin{equation}\label{S2.23}
		\begin{aligned}
		\tilde{\mathbb{D}}^{\alpha}_{t_{n-\sigma}}\|u_{h}^{n}\|^{2}+\|\nabla u_{h}^{n}\|^{2}&\lesssim  \|f^{n-\frac{\alpha}{2}}\|^{2}+\sum_{j=1}^{n-1}\tilde{w}_{nj}\|\nabla u_{h}^{j}\|^{2}+\|\nabla u_{h}^{n-1}\|^{2}+\|u_{h}^{n}\|^{2}.
		\end{aligned}
		\end{equation}
		We follow the similar arguments as we prove estimate \eqref{S1.24} to obtain \eqref{S2.24}.
	\end{proof}
\begin{lem}\label{6.1-1-2}  Under the assumptions  (H1), (H2), and (H3) the solution $u_{h}^{n}~(n\geq 1)$ of the scheme \eqref{7-10-22-6} satisfy the following a priori bound
		\begin{equation}\label{S2.24-2}
			\begin{aligned}
			\max_{1\leq m \leq N}\|\nabla u_{h}^{m}\|^{2}+k^{\alpha}\sum_{n=1}^{N}\tilde{p}_{N-n}^{(N)}\|\Delta_{h}^{M}u_{h}^{n}\|^{2}
			\lesssim \max_{1\leq n \leq N}\|f^{n-\frac{\alpha}{2}}\|^{2}+\|\nabla u_{0}\|^{2}.
			\end{aligned}
			\end{equation}
	\end{lem}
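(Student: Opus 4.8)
The plan is to obtain this higher-regularity bound by exactly the argument that upgrades Lemma \ref{5.3} to Lemma \ref{5.3-1} in the L1 case, now carried out with the L2-1$_\sigma$ difference operator $\tilde{\mathbb{D}}^{\alpha}_{t_{n-\sigma}}$ in place of $\mathbb{D}^{\alpha}_{t}$, and with the $L^2$-level bound of Lemma \ref{6.1} already available. The essential move is to test the scheme \eqref{7-10-22-6} with $v_h=-\Delta_h^M u_h^n$ rather than with $u_h^n$, so that the diffusion term generates the coercive quantity $\|\Delta_h^M u_h^n\|^2$ and the fractional term produces a gradient quantity. As in \eqref{9-10-22-1} and \eqref{9-10-22-5}, I would first rewrite the memory contribution $\sum_{j=1}^{n-1}\tilde{w}_{nj}B(t_{n-\sigma},t_j,u_h^j,v_h)$ through the discrete Laplacian operators \eqref{8-10-22-5}, \eqref{8-10-22-6}, so that its $b_2$-part becomes $\sum_j \tilde{w}_{nj}(-\Delta_h^{b_2}u_h^j,v_h)$ and the lower-order parts remain $L^2$ pairings, all controlled via \eqref{29-10-22-1} by $\|\Delta_h^{b_2}u_h^j\|\lesssim\|\nabla u_h^j\|$.

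For $n=1$ the difference $\tilde{\mathbb{D}}^{\alpha}_{t_{1-\sigma}}u_h^1$ is a single term proportional to $u_h^1-u_h^0$, so testing with $-\Delta_h^M u_h^1$, splitting $\hat u_h^{1,\sigma}=(1-\sigma)u_h^1+\sigma u_h^0$, and using $M\ge m_0$ with Young's inequality yields $\|\nabla u_h^1\|^2+k^{\alpha}\|\Delta_h^M u_h^1\|^2\lesssim \|f^{1-\sigma}\|^2+\|\nabla u_0\|^2$ for small $k$, exactly as in \eqref{9-10-22-3}--\eqref{9-10-22-4}. For $n\ge 2$ I would set $v_h=-\Delta_h^M u_h^n$ in \eqref{10-10-22-5}; the fractional term becomes $(M\nabla u_h^n,\tilde{\mathbb{D}}^{\alpha}_{t_{n-\sigma}}\nabla u_h^n)$, to which the positivity identity $(\tilde{\mathbb{D}}^{\alpha}_{t_{n-\sigma}}w^n,w^n)\ge \tfrac12\tilde{\mathbb{D}}^{\alpha}_{t_{n-\sigma}}\|w^n\|^2$ (as used for \eqref{S2.23}) is applied with $w^n=\nabla u_h^n$ in the $M$-weighted inner product, legitimate since $M(x,t_{n-\sigma},\|\nabla\bar u_h^{n-1,\sigma}\|^2)$ is a fixed positive function for each fixed $n$. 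The $\sigma$-split of $\hat u_h^{n,\sigma}=(1-\sigma)u_h^n+\sigma u_h^{n-1}$ leaves the coercive $(1-\sigma)\|\Delta_h^M u_h^n\|^2$ and a cross term $\sigma(M\nabla u_h^{n-1},\nabla(-\Delta_h^M u_h^n))$, which I would rewrite via \eqref{8-10-22-5} as $\sigma(\Delta_h^M u_h^{n-1},\Delta_h^M u_h^n)$ and split by Young's inequality: a positive multiple of $\|\Delta_h^M u_h^n\|^2$ survives (because $\sigma=\alpha/2<1/2$) and a previous-level $\|\Delta_h^M u_h^{n-1}\|^2$ remains on the right. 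The outcome is the gradient analogue of \eqref{S2.23}, namely
\begin{equation*}
\tilde{\mathbb{D}}^{\alpha}_{t_{n-\sigma}}\|\nabla u_h^n\|^2+\|\Delta_h^M u_h^n\|^2\lesssim \|f^{n-\sigma}\|^2+\sum_{j=1}^{n-1}\tilde{w}_{nj}\|\nabla u_h^j\|^2+\|\Delta_h^M u_h^{n-1}\|^2 .
\end{equation*}

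From here the argument is the one already used for \eqref{S1.24}: multiply by the discrete kernel $\tilde{p}^{(N)}_{N-n}$, sum over $n$, interchange the order of summation, invoke the kernel identities \eqref{2.812} and \eqref{2.914} of Lemma \ref{L1.21}, and close with the discrete Gr\"onwall inequality. The memory sum is controlled by the $L^2$-level bound \eqref{S2.24} of Lemma \ref{6.1}, which already supplies $k^{\alpha}\sum_n\tilde{p}^{(N)}_{N-n}\|\nabla u_h^n\|^2\lesssim \max_n\|f^{n-\alpha/2}\|^2+\|\nabla u_0\|^2$, so no circularity arises and the right-hand side reduces to the stated data. I expect the main obstacle to be precisely the two features absent from the L1 proof of Lemma \ref{5.3-1}: making the $M$-weighted positivity step rigorous when the coefficient $M(x,t_{n-\sigma},\|\nabla\bar u_h^{n-1,\sigma}\|^2)$ changes from one time level to the next, and handling the previous-level term $\|\Delta_h^M u_h^{n-1}\|^2$ generated by the $\sigma$-split. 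After multiplication by $k^\alpha$ and summation against $\tilde p^{(N)}$ this shifted term carries the same $k^\alpha$ weight as the coercive term on the left, so an index shift lets the discrete Gr\"onwall argument reabsorb it; this is the step where the smallness condition $m_0-4L_MK^2>0$ of (H2) and small $k$ are genuinely needed.
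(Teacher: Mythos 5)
Your proposal is correct and takes essentially the same route as the paper: the paper's entire proof of this lemma is the one-line remark that one combines the idea of Lemma \ref{5.3-1} (testing with $v_h=-\Delta_{h}^{M}u_{h}^{n}$ and using the discrete-Laplacian estimate \eqref{29-10-22-1}) with that of Lemma \ref{6.1} (the L2-1$_{\sigma}$ positivity identity, $\sigma$-split cross term, kernel summation via Lemma \ref{L1.21}, and discrete Gr\"{o}nwall), which is precisely the combination you carry out. In fact your write-up is more careful than the paper's sketch, e.g.\ in flagging the $M$-weighted positivity step and the reabsorption of the shifted $\|\Delta_{h}^{M}u_{h}^{n-1}\|^{2}$ term.
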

\begin{proof}
    We combine the idea of Lemma \ref{5.3-1} and Lemma \ref{6.1} to prove the result \eqref{S2.24-2}.
\end{proof}
\begin{thm}
Suppose that (H1), (H2), and (H3) hold. Then there exists a unique solution to the problem \eqref{7-10-22-6}.
\end{thm}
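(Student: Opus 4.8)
The plan is to reproduce the three-part argument of Theorem~\ref{T1.4}: the scheme \eqref{7-10-22-6} is linear at the levels $n\ge 2$ and nonlinear only at $n=1$, so I would dispose of the later levels first and then treat the first step with a fixed-point argument.

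For $n\ge 2$, observe that the Kirchhoff coefficient $M(x,t_{n-\sigma},\|\nabla\bar u_h^{n-1,\sigma}\|^2)$ depends, through $\bar u_h^{n-1,\sigma}=(2-\sigma)u_h^{n-1}-(1-\sigma)u_h^{n-2}$, only on already computed levels; by (H2) it is therefore a \emph{known} coefficient bounded below by $m_0$. Since the diffusion acts on $\hat u_h^{n,\sigma}=(1-\sigma)u_h^n+\sigma u_h^{n-1}$, which is affine in the unknown $u_h^n$, the equation is linear. Writing it as $a_n(u_h^n,v_h)=\ell_n(v_h)$ with $a_n(w,v_h)=\frac{k^{-\alpha}\tilde c_0^{(n)}}{\Gamma(2-\alpha)}(w,v_h)+(1-\sigma)(M(x,t_{n-\sigma},\|\nabla\bar u_h^{n-1,\sigma}\|^2)\nabla w,\nabla v_h)$, I would check that $a_n$ is positive definite on $X_h$: the leading weight satisfies $\tilde c_0^{(n)}>0$ (indeed $\tilde p_0^{(n)}=1/\tilde c_0^{(n)}\in(0,1)$ by Lemma~\ref{L1.21}), while $1-\sigma>0$ and $M\ge m_0$, so $a_n(w,w)\ge \frac{k^{-\alpha}\tilde c_0^{(n)}}{\Gamma(2-\alpha)}\|w\|^2+(1-\sigma)m_0\|\nabla w\|^2>0$ for $w\neq 0$. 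Existence and uniqueness at every level $n\ge 2$ follow from invertibility of the associated matrix.

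At $n=1$ the coefficient $M(x,t_{1-\sigma},\|\nabla\hat u_h^{1,\sigma}\|^2)$ is evaluated at $\hat u_h^{1,\sigma}=(1-\sigma)u_h^1+\sigma u_h^0$, which contains the unknown, so the equation is nonlinear and I would use the Brouwer-type Theorem~\ref{T1.3}. The clean route is to reparametrize by $z:=\hat u_h^{1,\sigma}$; since $\sigma\in(0,1)$ this is an affine bijection of $X_h$ with $u_h^1=\frac{1}{1-\sigma}(z-\sigma u_h^0)$, and because $\tilde{\mathbb D}^\alpha_{t_{1-\sigma}}u_h^1=\frac{k^{-\alpha}\tilde a_0}{\Gamma(2-\alpha)}(u_h^1-u_h^0)$ with $u_h^1-u_h^0=\frac{1}{1-\sigma}(z-u_h^0)$, the $n=1$ equation becomes the pure Kirchhoff problem $\frac{k^{-\alpha}\tilde a_0}{(1-\sigma)\Gamma(2-\alpha)}(z-u_h^0,v_h)+(M(x,t_{1-\sigma},\|\nabla z\|^2)\nabla z,\nabla v_h)=(1-\sigma)kB(t_{1-\sigma},t_0,u_h^0,v_h)+(f^{1-\sigma},v_h)$, where $\tilde a_0=(1-\sigma)^{1-\alpha}$. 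Defining $G:X_h\to X_h$ by moving all terms to one side and testing with $z$, the coercive term $(M\nabla z,\nabla z)\ge m_0\|\nabla z\|^2$ (only the lower bound of (H2) is needed), the bound \eqref{1.1a} on $B$, and Poincar\'e's inequality give $(G(z),z)>0$ for $\|z\|$ large; continuity of $G$ is inherited from that of $M$ and $B$. Theorem~\ref{T1.3} then yields a root $z$, hence a solution $u_h^1$. For uniqueness at $n=1$, I would subtract two solutions, split $M(\|\nabla\hat X\|^2)\nabla\hat X-M(\|\nabla\hat Y\|^2)\nabla\hat Y$ into the monotone part $M(\|\nabla\hat X\|^2)(\nabla\hat X-\nabla\hat Y)$ and a Lipschitz remainder, test with $Z=X_h^1-Y_h^1$ (note $\nabla\hat X-\nabla\hat Y=(1-\sigma)\nabla Z$), and use $L_M$ together with the a priori gradient bound $\|\nabla u_h^j\|\lesssim K$ from Lemmas~\ref{6.1} and \ref{6.1-1-2}; the structural condition $m_0-4L_MK^2>0$ of (H2) then forces $\|Z\|=\|\nabla Z\|=0$.

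The step I expect to be the main obstacle is $n=1$. Because the diffusion is applied to $\hat u_h^{1,\sigma}$ rather than to $u_h^1$, testing the original equation directly with $u_h^1$ produces a cross term $\sigma(M\nabla u_h^0,\nabla u_h^1)$ whose sign cannot be controlled without an upper bound on $M$, and (H2) supplies none. The reparametrization $z=\hat u_h^{1,\sigma}$ is exactly what removes this difficulty by restoring the genuine Kirchhoff structure, after which the coercivity and fixed-point argument of Theorem~\ref{T1.4} transfer with only cosmetic changes.
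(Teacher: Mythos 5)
Your proof is correct and follows essentially the same route as the paper: for $n\ge 2$ the scheme is linear with a positive definite coefficient matrix, and for $n=1$ your reparametrization $z=\hat u_{h}^{1,\sigma}$ is precisely the paper's device of multiplying the first-step equation by $(1-\sigma)$ so that it becomes a pure Kirchhoff problem in $\hat u_{h}^{1,\sigma}$, to which the Brouwer fixed-point argument of Theorem \ref{T1.4} applies. Your explicit uniqueness argument at $n=1$ (monotone part plus Lipschitz remainder, the a priori gradient bound, and the condition $m_{0}-4L_{M}K^{2}>0$ from (H2)) simply fills in a step the paper leaves implicit in the phrase ``proceeding analogously to the proof of Theorem \ref{T1.4}.''
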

\begin{proof}
    For the case $n\geq 2$, the scheme \eqref{7-10-22-6} is linear having positive definite coefficient matrix. Thus existence and uniqueness in this case follow immediately. For the case $n=1$ in the scheme \eqref{7-10-22-6}, the equation is nonlinear so we again use Br\"{o}uwer fixed point Theorem \ref{T1.3}. Consider the case for $n=1$ in the problem \eqref{7-10-22-6} with $\alpha_{0}=k^{\alpha}\Gamma(2-\alpha)$ and $\tilde{a}_{0}=(1-\sigma)^{1-\alpha}$
    \begin{equation}\label{9-10-22-6-1}
			\begin{aligned}
			\left(u_{h}^{1}-u_{h}^{0},v_{h}\right)&+\frac{\alpha_{0}}{\tilde{a}_{0}}\left(M\left(x,t_{1},\|\nabla \hat{u}_{h}^{1,\sigma}\|^{2}\right)\nabla \hat{u}_{h}^{1,\sigma},\nabla v_{h}\right)\\
			&=\frac{\alpha_{0}}{\tilde{a}_{0}}\left(f^{1-\sigma},v_{h}\right)
			+\frac{\alpha_{0}}{\tilde{a}_{0}}kB\left(t_{1-\sigma},t_{0},u_{h}^{0},v_{h}\right).
			\end{aligned}
			\end{equation}
			Multiply the equation \eqref{9-10-22-6-1} by $(1-\sigma)$ to obtain 
			 \begin{equation}\label{9-10-22-6-2}
			\begin{aligned}
			\left(\hat{u}_{h}^{1,\sigma},v_{h}\right)-\left(u_{h}^{0},v_{h}\right)&+(1-\sigma)\frac{\alpha_{0}}{\tilde{a}_{0}}\left(M\left(x,t_{1},\|\nabla \hat{u}_{h}^{1,\sigma}\|^{2}\right)\nabla \hat{u}_{h}^{1,\sigma},\nabla v_{h}\right)\\
			&=(1-\sigma)\frac{\alpha_{0}}{\tilde{a}_{0}}\left(f^{1-\sigma},v_{h}\right)
			+(1-\sigma)\frac{\alpha_{0}}{\tilde{a}_{0}}kB\left(t_{1-\sigma},t_{0},u_{h}^{0},v_{h}\right).
			\end{aligned}
			\end{equation}
			Further, proceeding analogously to the proof of Theorem \ref{T1.4} we conclude the existence of $\hat{u}_{h}^{1,\sigma}$. Hence existence of $u_{h}^{1}$ follows.
\end{proof}
Now, we derive the convergence estimate for the numerical scheme \eqref{7-10-22-6}. This estimate is proved with the help of the following lemmas.
\begin{lem}\label{9-10-22-12-1}\cite{alikhanov2015new} If $u\in C^{3}([0,T];L^{2}(\Omega))$ then truncation error $\tilde{\mathbb{Q}}^{n-\sigma}$ defined in \eqref{1.20} satisfies 
\begin{equation}\label{9-10-22-7-1}
    \|\tilde{\mathbb{Q}}^{n-\sigma}\|\lesssim k^{3-\alpha}~\text{for}~n~\geq~1.
\end{equation}
\begin{lem}\label{9-10-22-11-1} For any function $u\in C^{2}[0,T]$, the linearization error  $(u^{n}-\bar{u}^{n-1,\sigma}):=\bar{\mathbb{L}}^{n-1,\sigma}$ defined in \eqref{7-10-22-4} and  $(u^{n}-\hat{u}^{n,\sigma}):=\hat{\mathbb{L}}^{n,\sigma}$ defined in \eqref{7-10-22-5} undergo
\begin{equation}\label{9-10-22-8-1}
    |\bar{\mathbb{L}}^{n-1,\sigma}|\lesssim k^{2}~\text{for}~n\geq 2.
\end{equation}
and
\begin{equation}\label{9-10-22-8-1-1}
    |\hat{\mathbb{L}}^{n,\sigma}|\lesssim k^{2}~\text{for}~n\geq 1.
\end{equation}
\end{lem}
\begin{proof} This lemma is proved by  an application of Taylor's series expansion as we have proved  Lemma \eqref{9-10-22-11}.
\end{proof}
\end{lem}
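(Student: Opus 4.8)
The plan is to derive both bounds from Taylor's theorem with the Lagrange form of the remainder, expanding about the node $t_{n-\sigma}=(1-\sigma)t_{n}+\sigma t_{n-1}$, exactly in the spirit of Lemma \ref{9-10-22-11}. The guiding observation is that $\hat{u}^{n,\sigma}$ and $\bar{u}^{n-1,\sigma}$ are, respectively, the linear interpolation and the linear extrapolation of $u$ evaluated at $t_{n-\sigma}$; in both cases the weights sum to $1$ and are tuned precisely so that the first-order Taylor term cancels, which is what upgrades an $O(k)$ consistency error to $O(k^{2})$.

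I would first treat $\hat{\mathbb{L}}^{n,\sigma}$, the simpler interpolation case. Using $t_{n}-t_{n-\sigma}=\sigma k$ and $t_{n-1}-t_{n-\sigma}=-(1-\sigma)k$, I expand the two nodal values about $t_{n-\sigma}$:
\begin{align*}
u^{n}&=u(t_{n-\sigma})+\sigma k\,u'(t_{n-\sigma})+\frac{1}{2}\sigma^{2}k^{2}u''(\eta_{1}),\\
u^{n-1}&=u(t_{n-\sigma})-(1-\sigma)k\,u'(t_{n-\sigma})+\frac{1}{2}(1-\sigma)^{2}k^{2}u''(\eta_{2}),
\end{align*}
for suitable $\eta_{1}\in(t_{n-\sigma},t_{n})$ and $\eta_{2}\in(t_{n-1},t_{n-\sigma})$. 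Forming $(1-\sigma)u^{n}+\sigma u^{n-1}$, the zeroth-order contributions combine to $u(t_{n-\sigma})$ and the first-order coefficient is $(1-\sigma)\sigma-\sigma(1-\sigma)=0$, so only the two $O(k^{2})$ remainders survive; since $u\in C^{2}[0,T]$ the second derivative is bounded on $[0,T]$, and $|\hat{\mathbb{L}}^{n,\sigma}|\lesssim k^{2}$ follows.

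The estimate for $\bar{\mathbb{L}}^{n-1,\sigma}$ is obtained by the same mechanism, now extrapolating from $t_{n-1}$ and $t_{n-2}$, where $t_{n-1}-t_{n-\sigma}=(\sigma-1)k$ and $t_{n-2}-t_{n-\sigma}=(\sigma-2)k$. Expanding both nodal values about $t_{n-\sigma}$ and forming $(2-\sigma)u^{n-1}-(1-\sigma)u^{n-2}$ produces the zeroth-order term $[(2-\sigma)-(1-\sigma)]u(t_{n-\sigma})=u(t_{n-\sigma})$ and a first-order coefficient $(2-\sigma)(\sigma-1)-(1-\sigma)(\sigma-2)$. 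The one step I would verify carefully is the vanishing of this coefficient, which reduces to the identity $(2-\sigma)(\sigma-1)=(1-\sigma)(\sigma-2)=-\sigma^{2}+3\sigma-2$; once the linear term is gone, the two Lagrange remainders again leave $|\bar{\mathbb{L}}^{n-1,\sigma}|\lesssim k^{2}$. No genuine difficulty arises beyond this bookkeeping of the interpolation weights, which is exactly why the result reduces to the argument already used for Lemma \ref{9-10-22-11}.
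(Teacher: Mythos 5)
Your proof is correct and takes essentially the same route as the paper, whose proof of this lemma is a one-line appeal to Taylor expansion as in Lemma \ref{9-10-22-11}; your version merely makes explicit the cancellation of the first-order terms via the weight identities $(1-\sigma)\sigma-\sigma(1-\sigma)=0$ and $(2-\sigma)(\sigma-1)-(1-\sigma)(\sigma-2)=0$. One point in your favor is worth recording: expanding about $t_{n-\sigma}$ and measuring the errors against $u(t_{n-\sigma})$ is the only workable reading, since the combinations $\bar{u}^{n-1,\sigma}$ and $\hat{u}^{n,\sigma}$ approximate $u^{n-\sigma}$ by \eqref{7-10-22-4} and \eqref{7-10-22-5}, and the quantities $u^{n}-\bar{u}^{n-1,\sigma}$, $u^{n}-\hat{u}^{n,\sigma}$ literally written in the lemma statement are only $O(k)$ (their leading term is $\sigma k\,u'$), so the $u^{n}$ appearing there must be read as $u^{n-\sigma}$.
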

\begin{lem}\label{9-10-22-10-1}\cite{pani1992numerical} If $u\in C^{4}[0,T]$ then quadrature error defined by \eqref{S1.14AD} has the following error estimate 
\begin{equation}\label{9-10-22-9-1}
    |q^{n-\sigma}(u)|\lesssim k^{2}~\text{for}~n~\geq ~1. 
\end{equation}
\end{lem}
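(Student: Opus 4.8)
The plan is to exploit the fact that the modified rule \eqref{S1.14AD} and the original rule \eqref{S1.14} share the \emph{identical} composite Simpson and trapezoidal machinery on $[0,t_{n-1}]$; the two rules differ only in their final sub-interval, whose upper limit is $t_{n-\sigma}=t_{n-1}+(1-\sigma)k$ rather than $t_n$ and whose left-rectangle weight is $(1-\sigma)k$ rather than $k$. I would therefore recycle the bulk of the error analysis encapsulated in Lemma \ref{9-10-22-10} (due to \cite{pani1992numerical}) and only re-examine the short terminal interval by hand.

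First I would split the exact integral as
\begin{equation*}
\int_0^{t_{n-\sigma}} u(s)\,ds = \int_0^{t_{n-1}} u(s)\,ds + \int_{t_{n-1}}^{t_{n-\sigma}} u(s)\,ds,
\end{equation*}
and denote by $Q_0(u)$ the common part of both rules, namely the composite Simpson sum on $[0,\bar t_{j_n}^n]$ plus the trapezoidal sum on $[\bar t_{j_n}^n,t_{n-1}]$, which is precisely the quadrature applied to $\int_0^{t_{n-1}}u$. Writing $e_0 := \int_0^{t_{n-1}} u - Q_0(u)$ (with the convention $e_0=0$ in the degenerate case $n=1$), the definition \eqref{S1.14AD} gives
\begin{equation*}
\tilde q^{n-\sigma}(u) = e_0 + \left(\int_{t_{n-1}}^{t_{n-\sigma}} u(s)\,ds - (1-\sigma)k\,u(t_{n-1})\right).
\end{equation*}

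Next I would dispose of the common part. Since the original rule \eqref{S1.14} reads $q^n(u) = e_0 + \big(\int_{t_{n-1}}^{t_n} u - k\,u(t_{n-1})\big)$ and the left-rectangle error on $[t_{n-1},t_n]$ is $O(k^2)$ by a one-term Taylor expansion, Lemma \ref{9-10-22-10} forces $|e_0|\lesssim k^2$. (Equivalently, $e_0$ is a composite Simpson error on $[0,\bar t_{j_n}^n]$, of size $O(\bar t_{j_n}^n k_1^4)=O(k^2)$ because $k_1=[k^{-1/2}]k\sim k^{1/2}$, plus a composite trapezoidal error on $[\bar t_{j_n}^n,t_{n-1}]$, of size $O(k^{5/2})$ since that strip has length $O(k_1)=O(k^{1/2})$.) For the remaining term, since $u\in C^1[0,T]$,
\begin{equation*}
\left|\int_{t_{n-1}}^{t_{n-\sigma}}\big(u(s)-u(t_{n-1})\big)\,ds\right| \le \|u'\|_{\infty}\int_{t_{n-1}}^{t_{n-\sigma}}(s-t_{n-1})\,ds = \|u'\|_{\infty}\,\frac{\big((1-\sigma)k\big)^2}{2}\lesssim k^2,
\end{equation*}
using $0<1-\sigma<1$. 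Combining the two bounds yields $|\tilde q^{n-\sigma}(u)|\lesssim k^2$.

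The only delicate point is the calibration of the Simpson block: the estimate works precisely because the coarse step $k_1\sim k^{1/2}$ makes $k_1^4\sim k^2$ while the leftover trapezoidal strip of length $O(k^{1/2})$ contributes only an $O(k^{5/2})$, subdominant error. This is exactly the balance already established in \cite{pani1992numerical} and inherited here, so it need not be redone. Everything genuinely new in the $\sigma$-shifted rule is confined to the length-$(1-\sigma)k$ terminal interval, whose rectangle error is manifestly $O(k^2)$; hence no regularity beyond $u\in C^4[0,T]$ is required and the main obstacle reduces to the bookkeeping of the splitting rather than to any new analytic estimate.
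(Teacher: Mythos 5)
Your proof is correct, and it supplies an argument the paper itself omits: the paper states this lemma with a bare citation to \cite{pani1992numerical} (which covers the unshifted rule \eqref{S1.14}) and treats the $\sigma$-shifted rule \eqref{S1.14AD} as a ``small modification'' without justification. Your decomposition is precisely that justification. Since the two rules share the identical composite Simpson/trapezoid part $Q_{0}$ on $[0,t_{n-1}]$, writing
\begin{equation*}
\tilde q^{\,n-\sigma}(u)=e_{0}+\left(\int_{t_{n-1}}^{t_{n-\sigma}}u(s)\,ds-(1-\sigma)k\,u(t_{n-1})\right),
\qquad e_{0}=\int_{0}^{t_{n-1}}u(s)\,ds-Q_{0}(u),
\end{equation*}
confines everything new to a single left-rectangle error over an interval of length $(1-\sigma)k\le k$, which is $O(k^{2})$ already for $u\in C^{1}$. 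Both of your bounds for $e_{0}$ are valid: subtracting the $O(k^{2})$ rectangle error on $[t_{n-1},t_{n}]$ from $q^{n}(u)$ and invoking Lemma \ref{9-10-22-10}, or arguing directly from the calibration $k_{1}=[k^{-1/2}]k\sim k^{1/2}$, which makes the composite Simpson error $O(Tk_{1}^{4})=O(k^{2})$ (this is where $u\in C^{4}$ is used) and the trapezoid strip of length at most $2k_{1}$ contribute only $O(k^{5/2})$. The degenerate cases ($n=1$, or $j_{n}=0$ when the Simpson block is empty) are correctly absorbed by your convention $e_{0}=0$ or by dropping the empty sums, and for $n=1$ the scheme's term $(1-\sigma)kB(t_{1-\sigma},t_{0},\cdot,\cdot)$ is exactly the single rectangle your estimate controls. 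In short: the proposal is sound, and it makes explicit the reduction that the paper delegates to the literature.
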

\begin{subsection}{Proof of the Theorem \ref{J2}}
\begin{proof} Take $u_{h}^{1}=W^{1}-\theta^{1}$ with $\theta^{0}=0$ in the scheme  \eqref{7-10-22-6} for $n=1$, we have the following error equation for $\theta^1$
\begin{equation}\label{10-10-22-7}
    \begin{aligned}
&\left(\tilde{\mathbb{D}}^{\alpha}_{t_{1-\sigma}}\theta^{1},v_{h}\right)+\left(M\left(x,t_{1-\sigma},\|\nabla \hat{u}_{h}^{1,\sigma}\|^{2}\right)\nabla \hat{\theta}^{1,\sigma},\nabla v_{h}\right)\\
&=\left(\tilde{\mathbb{Q}}^{1-\sigma},v_{h}\right)-\left(^{C}D^{\alpha}_{t_{1-\sigma}}\rho,v_{h}\right)+\left(M(x,t_{1-\sigma},\|\nabla u^{1-\sigma}\|^{2})(\nabla \hat{W}^{1,\sigma}-\nabla W^{1-\sigma}),\nabla v_{h}\right)\\
&+\left((M(x,t_{1-\sigma},\|\nabla \hat{u}_{h}^{1,\sigma}\|^{2})-M(x,t_{1-\sigma},\|\nabla u^{1-\sigma}\|^{2}))\nabla \hat{W}^{1,\sigma},\nabla v_{h}\right)+\left(\nabla \tilde{q}^{1-\sigma}(W),\nabla v_{h}\right).
\end{aligned}
\end{equation}
Set $v_{h}=\theta^{1}$ in \eqref{10-10-22-7} with $\alpha_{0}=k^{\alpha}\Gamma(2-\alpha)$ and $\tilde{a}_{0}=(1-\sigma)^{1-\alpha}$ to have
\begin{equation}\label{10-10-22-8}
    \begin{aligned}
\left(1-\frac{\alpha_{0}}{\tilde{a}_{0}}\right)\|\theta^{1}\|^{2}&+k^{\alpha}\left(m_{0}-4L_{M}K^{2}\right)\|\nabla \theta^{1}\|^{2}\\
&\lesssim k^{\alpha}\|\tilde{\mathbb{Q}}^{1-\sigma}\|^{2}+ k^{\alpha}\|~^{C}D^{\alpha}_{t_{1-\sigma}}\rho\|^{2}+k^{\alpha}\|\nabla \hat{\mathbb{L}}^{1,\sigma}\|^{2}+ k^{\alpha}\|\nabla \hat{\rho}^{1,\sigma}\|^{2}\\
&+ k^{\alpha}\|\nabla \tilde{q}^{1-\sigma}(W)\|^{2}.
\end{aligned}
\end{equation}
For small value of  $k$ with hypothesis (H2) and approximation properties  \eqref{9-10-22-7-1}, \eqref{8-10-22-17}, \eqref{11-10-22-51},  \eqref{9-10-22-8-1-1}, and  \eqref{9-10-22-9-1}, we deduce
\begin{equation}\label{10-10-22-9}
    \begin{aligned}
\|\theta^{1}\|^{2}&+k^{\alpha}\|\nabla \theta^{1}\|^{2}\lesssim \left(k^{3-\alpha}+h^{2}+k^{2}+h+k^{2}\right)^{2}\lesssim \left(k^{2}+h\right)^{2}.
\end{aligned}
\end{equation}
For $n\geq 2$, substitute $u_{h}^{n}=W^{n}-\theta^{n}$ in the scheme \eqref{7-10-22-6}, then $\theta^{n}$ satisfies 
\begin{equation}\label{10-10-22-7-1}
    \begin{aligned}
&\left(\tilde{\mathbb{D}}^{\alpha}_{t_{n-\sigma}}\theta^{n},v_{h}\right)+\left(M\left(x,t_{n-\sigma},\|\nabla \bar{u}_{h}^{n-1,\sigma}\|^{2}\right)\nabla \hat{\theta}^{n,\sigma},\nabla v_{h}\right)\\
&=\left(\tilde{\mathbb{Q}}^{n-\sigma},v_{h}\right)-\left(^{C}D^{\alpha}_{t_{n-\sigma}}\rho,v_{h}\right)+\left(M(x,t_{n-\sigma},\|\nabla\bar{u}_{h}^{n-1,\sigma}\|^{2})(\nabla \hat{W}^{n,\sigma}-\nabla W^{n-\sigma}),\nabla v_{h}\right)\\
&+\left((M(x,t_{n-\sigma},\|\nabla \bar{u}_{h}^{n-1,\sigma}\|^{2})-M(x,t_{n-\sigma},\|\nabla u^{n-\sigma}\|^{2}))\nabla W^{n-\sigma},\nabla v_{h}\right)\\
&+\left(\nabla \tilde{q}^{n-\sigma}(W),\nabla v_{h}\right)+\sum_{j=1}^{n-1}\tilde{w}_{nj}B(t_{n-\sigma},t_{j},\theta^{j},v_{h}).
\end{aligned}
\end{equation}
Put $v_{h}=\theta^{n}$ in \eqref{10-10-22-7-1} it follows 
	\begin{equation}\label{2.21}
	\begin{aligned}
	\tilde{\mathbb{D}}^{\alpha}_{t_{n-\frac{\alpha}{2}}}\|\theta^{n}\|^{2}+\| \nabla \theta^{n}\|^{2}
	&\lesssim \|\tilde{\mathbb{Q}}^{n-\sigma}\|^{2}+\|~^{C}D^{\alpha}_{t_{n-\sigma}}\rho\|^{2}+\|\nabla \hat{\mathbb{L}}^{n,\sigma}\|^{2} +\|\nabla \bar{\rho}^{n-1,\sigma}\|^{2}\\
	&+\|\nabla \bar{\mathbb{L}}^{n-1,\sigma}\|^{2}+\|\nabla \tilde{q}^{n-\sigma}(W)\|^{2}\\
	&+\|\nabla \theta^{n-1}\|^{2}+\|\nabla \theta^{n-2}\|^{2}+\sum_{j=1}^{n-1}\tilde{w}_{nj}\|\nabla \theta^{j}\|^{2}.
	\end{aligned}
	\end{equation}
	Further, apply the approximation properties  \eqref{9-10-22-7-1}, \eqref{8-10-22-17}, \eqref{11-10-22-51}, \eqref{9-10-22-8-1}, \eqref{9-10-22-8-1-1},  and \eqref{9-10-22-9-1} to arrive at 
	\begin{equation}\label{S2.37}
	\begin{aligned}
	\tilde{\mathbb{D}}^{\alpha}_{t_{n-\frac{\alpha}{2}}}\|\theta^{n}\|^{2}+\|\nabla \theta^{n}\|^{2}&\lesssim\left(\| \theta^{n}\|^{2}+\|\nabla \theta^{n-1} \|^{2}+\|\nabla \theta^{n-2}\|^{2}+\sum_{j=1}^{n-1}k_{1}\|\nabla \theta^{j}\|^{2}\right)\\
	&+ \left(k^{3-\alpha}+h^{2}+k^{2}+h+k^{2}+k^{2}+h^{2}+h^{2}\right)^{2}.
	\end{aligned}
	\end{equation}
\noindent Now, follow the similar arguments as in Theorem \ref{J1} and Theorem \ref{6.1} to obtain 
	\begin{equation*}
	\|\theta^{m}\|^{2}+k^{\alpha}\sum_{n=1}^{m}\tilde{p}_{m-n}^{(m)}\|\nabla \theta^{n}\|^{2}
	\lesssim \left(h+k^{2}\right)^{2}.
	\end{equation*}
\end{proof}
\end{subsection}

\section{Numerical results}\label{11-10-22-58}
 In this section, we implement the theoretical  results obtained from fully discrete formulations \eqref{7-10-22-2} and \eqref{7-10-22-6}  for the problem \eqref{6-10-22-1}. For the  space discretization linear hat basis functions  say $\{\psi_{1}, \psi_{2}, \dots, \psi_{J}\}$ for $J$ dimensional subspace $X_{h}$ of $H_{0}^{1}(\Omega)$ are used, then numerical solution $u_{h}^{n}~(n\geq 1)$ for the considered \eqref{6-10-22-1}  at any time $t_{n}$ in $[0,T]$ is written as  
\begin{equation}\label{7.1}
u_{h}^{n}=\sum_{i=1}^{J}\alpha_{i}^{n}\psi_{i},
\end{equation}
where $\alpha^{n}=\left(\alpha_{1}^{n}, \alpha_{2}^{n}, \alpha_{3}^{n}, \dots, \alpha_{J}^{n} \right)$ is to be determined.
Further, denote the error estimates that we have proved in Theorems \ref{J1} and  \ref{J2} by 
\begin{equation*}
\begin{aligned}
\text{\bf Error-1}&=\max_{1\leq n \leq N}\|u(t_{n})-u_{h}^{n}\|+\left(k^{\alpha} \sum_{n=1}^{N}p_{N-n}\|\nabla u(t_{n})-\nabla u_{h}^{n}\|^{2}\right)^{1/2},
\end{aligned}
\end{equation*} 
and 
\begin{equation*}
\begin{aligned}
\text{\bf Error-2}&=\max_{1\leq n \leq N}\|u(t_{n})-u_{h}^{n}\|+\left(k^{\alpha} \sum_{n=1}^{N}\tilde{p}_{N-n}^{(N)}\|\nabla u(t_{n})-\nabla u_{h}^{n}\|^{2}\right)^{1/2},
\end{aligned}
\end{equation*} 
respectively.
\begin{exam} We consider the problem \eqref{6-10-22-1} on $\Omega \times [0,T]$  where $\Omega = [0,1]\times[0,1]$ and $T=1$ with following data
\begin{enumerate}
	\item $M\left(x,y,t,\|\nabla u\|^{2}\right)=a(x,y)+b(x,y)\|\nabla u\|^{2}$ with $a(x,y)=x^{2}+y^{2}+1$ and $b(x,y)=xy$. This type of diffusion coefficient $M$ has been studied by medeiros $et.al.$ for the purpose of numerical experiments in  \cite{medeiros2012perturbation}.
	\item Moreover, we take the following memory operator as in \cite{barbeiro2011h1}\\
	$b_{2}(t,s,x,y)=-e^{t-s}I; ~ b_{1}(t,s,x,y)=b_{0}(t,s,x,y)=0.$ 
		\item	Source term  
	 $f(x,t)=f_{1}(x,t)+f_{2}(x,t)+f_{3}(x,t)$ with
	 \begin{equation*}
	     \begin{aligned}
	       f_{1}(x,t)&=\frac{2}{\Gamma(3-\alpha)}t^{2-\alpha}(x-x^{2})(y-y^{2}),\\   
	       f_{2}(x,t)&=2(x+y-x^{2}-y^{2})\left(t^{2}x^{2}+t^{2}y^{2}+\frac{xyt^{6}}{45}-2t-2+2e^{t}\right),\\
	       f_{3}(x,t)&=t^{2}(2x-1)(y-y^{2})\left(2x+\frac{yt^{4}}{45}\right)+t^{2}(2y-1)(x-x^{2})\left(2y+\frac{xt^{4}}{45}\right).
	     \end{aligned}
	 \end{equation*}
	 Corresponding to the above data, the exact solution of the problem \eqref{6-10-22-1} is  given by 
		$u=t^{2}\left(x-x^{2}\right)\left(y-y^{2}\right).$	
\end{enumerate}
\end{exam}
We obtain errors and convergence rates in the space direction as well as in the time direction for different  parameters $h, k,$ and  $\alpha$. The convergence rate is calculated through the following $\log$ vs. $\log$ formula 
\begin{equation*}
   \text{ Convergence rate} =\begin{cases}\frac{\log(E(\tau,h_{1})/E(\tau,h_{2}))}{\log(h_{1}/h_{2})};& \text{In space direction}\\
   \frac{\log(E(\tau_{1},h)/E(\tau_{2},h))}{\log(\tau_{1}/\tau_{2})};& \text{In time direction }
\end{cases}
\end{equation*}
where $E(\tau,h)$ denotes the error at mesh points $\tau$ and $h$.\\
\noindent \textbf{Linearized  L1  Galerkin FEM:} This numerical scheme \eqref{7-10-22-2}  provides a convergence order of $O(h+k^{2-\alpha})$. To observe this order of convergence numerically, we run the MATLAB code at different iterations by  setting $h \simeq k^{2-\alpha}$. Here $h$ is taken as the area of the triangle in the triangulation of domain $\Omega=[0,1]\times [0,1]$. For next iteration, we join the midpoint of each edge and make another triangulation as presented in Figures 1-3. In this way we collect the numerical results upto five iterations to support our theoretical estimate.

\begin{figure}[H]
	\centering
\includegraphics[scale=0.2]{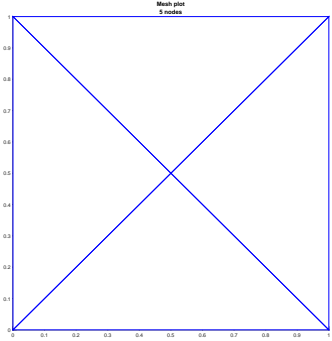}
\caption{Iteration no. 1}
\end{figure}
\begin{figure}[H]
	\centering
\includegraphics[scale=0.2]{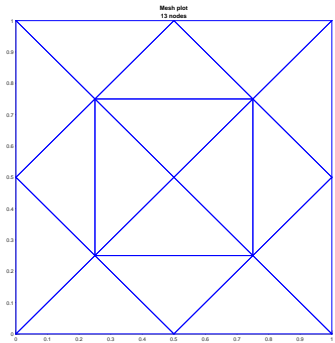}
\caption{Iteration no. 2}
\end{figure}
\begin{figure}[H]
	\centering
\includegraphics[scale=0.2]{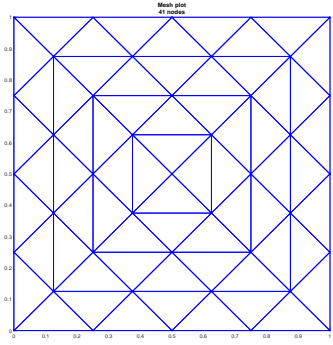}
\caption{Iteration no. 3}
\end{figure}
\noindent From  Tables \ref{table3}, \ref{table31}, and \ref{table361}, we conclude that the convergence rate is linear in space and $(2-\alpha)$ in the time direction. We also observe that as $\alpha \rightarrow 1 $ then convergence rates is  approaching to 1 in the  time direction which coincide with the results  established in \cite{kumar2020finite} for classical diffusion case. 
  \begin{table}[htbp]
	\centering
	\caption{\emph{Error and Convergence Rates in space-time direction for $\alpha=0.25$ }}
	\begin{tabular}{|c|c|c|c|}
		\hline
		\textbf{Iteration No.} & \textbf{Error-1} & \textbf{Rate in Space} & \textbf{Rate in Time} \\
		\hline
		1 & 3.04e-02 & - & -  \\
		\hline
		2 & 1.50e-02& 1.0175 & 1.7807  \\
		\hline
		3 &  7.60e-03 & 0.9824 & 1.7192 \\
		\hline
		4 & 3.82e-03 & 0.9909 & 1.7342\\
		\hline
		5 & 1.89e-03 & 1.0166 & 1.7790\\
		\hline
	\end{tabular}%
	\label{table3}%
\end{table}
\begin{table}[htbp]
	\centering
	\caption{\emph{Error and Convergence Rates in space-time direction for $\alpha=0.5$ }}
	\begin{tabular}{|c|c|c|c|}
		\hline
		\textbf{Iteration No.} & \textbf{Error-1} & \textbf{Rate in Space} & \textbf{Rate in Time} \\
		\hline
		1 & 2.63e-02 & - & -  \\
		\hline
		2 & 1.35e-02& 0.9568 & 1.4352  \\
		\hline
		3 &  6.53e-03 & 1.0521 & 1.5781 \\
		\hline
		4 & 3.20e-03 & 1.0285 & 1.5428\\
		\hline
		5 & 1.58e-03 & 1.0141 & 1.5212\\
		\hline
	\end{tabular}%
	\label{table31}%
\end{table}
\begin{table}[htbp]
	\centering
	\caption{\emph{Error and Convergence Rates in space-time direction for $\alpha=0.75$ }}
	\begin{tabular}{|c|c|c|c|}
		\hline
		\textbf{Iteration No.} & \textbf{Error-1} & \textbf{Rate in Space} & \textbf{Rate in Time} \\
		\hline
		1 & 2.23e-02 & - & -  \\
		\hline
		2 & 1.09e-02& 1.0340 & 1.2925  \\
		\hline
		3 &  5.33e-03 & 1.0345 & 1.2931 \\
		\hline
		4 & 2.65e-03 & 1.0087 & 1.2960\\
		\hline
		5 & 1.29e-03 & 1.0331 & 1.2914\\
		\hline
	\end{tabular}%
	\label{table361}%
\end{table}

\noindent \textbf{Linearized L2-1$_\sigma$  Galerkin  FEM:} This numerical scheme has theoretical convergence order of  $O(h+k^{2})$. Here we set $h\simeq k^{2}$ to conclude the convergence rates in the  space-time directions. Here iteration numbers have the same meaning as for L1 Galerkin FEM \eqref{7-10-22-2}. From the Tables \ref{table32}, \ref{table322}, and \ref{table323}, we deduce  that the convergence rate is linear in space and quadratic in the time direction.
\begin{table}[htbp]
	\centering
	\caption{\emph{Error and Convergence Rates in space-time direction for $\alpha=0.25$  }}
	\begin{tabular}{|c|c|c|c|}
		\hline
		\textbf{Iteration No.} & \textbf{Error-2} & \textbf{Rate in Space} & \textbf{Rate in Time} \\
		\hline
		1 & 2.32e-02 & - & -  \\
		\hline
		2 & 1.10e-02& 1.0679 & 2.1359  \\
		\hline
		3 &  5.38e-03 & 1.0393 & 2.0787 \\
		\hline
		4 & 2.52e-03 & 1.0904 & 2.1809\\
		\hline
		5 & 1.24e-03 & 1.0225 & 2.0451\\
		\hline
	\end{tabular}%
	\label{table32}%
\end{table}
 \begin{table}[htbp]
	\centering
	\caption{\emph{Error and Convergence Rates in space-time direction for $\alpha=0.5$  }}
	\begin{tabular}{|c|c|c|c|}
		\hline
		\textbf{Iteration No.} & \textbf{Error-2} & \textbf{Rate in Space} & \textbf{Rate in Time} \\
		\hline
		1 & 2.42e-02 & - & -  \\
		\hline
		2 & 1.16e-02& 1.0573 & 2.1147  \\
		\hline
		3 &  5.70e-03 & 1.0329 & 2.0658 \\
		\hline
		4 & 2.68e-03 & 1.0873 & 2.1746\\
		\hline
		5 & 1.32e-03 & 1.0207 & 2.0414\\
		\hline
	\end{tabular}%
	\label{table322}%
\end{table}

\begin{table}[htbp]
	\centering
	\caption{\emph{Error and Convergence Rates in space-time direction for $\alpha=0.75$  }}
	\begin{tabular}{|c|c|c|c|}
		\hline
		\textbf{Iteration No.} & \textbf{Error-2} & \textbf{Rate in Space} & \textbf{Rate in Time} \\
		\hline
		1 & 1.97e-02 & - & -  \\
		\hline
		2 & 9.05e-03& 1.1284 & 2.2569  \\
		\hline
		3 &  4.25e-03 & 1.0873 & 2.1746 \\
		\hline
		4 & 1.94e-03 & 1.1335 & 2.2671\\
		\hline
		5 & 9.30e-04 & 1.0614 & 2.1228\\
		\hline
	\end{tabular}%
	\label{table323}%
\end{table}


\section{Conclusions}\label{11-10-22-59}
 In this work, the well-posedness of the weak formulation for time-fractional integro-differential equations of Kirchhoff type for non-homogeneous materials is established. As a consequence of new Ritz-Volterra type projection operator, error estimate of $O(h)$ in energy norm for semi discrete formulation is derived. Further, to obtain the numerical solution  for this class of equations, we have developed and analyzed two different kinds of efficient numerical schemes. First,  we constructed  a linearized  L1 Galerkin FEM  and derived the  convergence rate  of order  ($2$-$\alpha$) in the time direction. Next, to enhance the convergence order in the time direction we proposed a new  linearized L2-1$_{\sigma}$  Galerkin FEM. We have  proved that numerical solutions of this scheme  converge to the exact solution of the problem \eqref{6-10-22-1} with the  accuracy rate of $O(h+k^{2})$. Finally, numerical results revealed that theoretical error estimates are sharp.

\begin{comment}
\documentclass[a4paper, 12pt, twoside, onecolumn ]{article}
\usepackage[utf8]{inputenc}
\usepackage{amsmath, amssymb, amsthm,mathrsfs}
\usepackage{enumerate} 
\usepackage{graphicx}
\usepackage[top=1in, bottom=1.3in, left=1.3in, right=1in]{geometry}
\renewcommand{\baselinestretch}{1.2}
\usepackage{lmodern}
\usepackage[all]{xy}
\usepackage{tikz-cd} 
\usepackage{authblk}
 \usepackage{float}
 \usepackage{hyperref}
\date{}

\newtheorem{defn}{\bf Definition}[section] 
\newtheorem{exam}[defn]{\bf Example}
\newtheorem{exer}[defn]{\bf Exercise}
\newtheorem{prop}[defn]{\bf Proposition}
\newtheorem{lem}[defn]{\bf Lemma}
\newtheorem{thm}[defn]{\bf Theorem} 
\newtheorem{cor}[defn]{\bf Corollary}
\newtheorem{rmk}{\bf Remark}

\numberwithin{equation}{section}
\title{Finite Element Analysis of Time Fractional Integro-differential Equations of Kirchhoff type for Non-homogeneous Materials}
\author[$\star$]{Lalit Kumar}
\author[$\star$]{Sivaji Ganesh Sista}
\affil[$\star$]{Department of Mathematics, Indian Institute of Technology Bombay, Mumbai-400076, India}
\author[$\dagger$]{K. Sreenadh}
\affil[$\dagger$]{Department of Mathematics, Indian Institute of Technology Delhi, New Delhi-110016, India}	
\usepackage[pagewise]{lineno}
\linenumbers
\begin{document}

\maketitle
\begin{abstract}
	\noindent In this paper, we study an initial-boundary value problem of Kirchhoff type involving memory term for non-homogeneous materials $(D_{\alpha})$. The purpose of this research is threefold. First, we prove  the existence and uniqueness of weak solutions to the problem $(D_{\alpha})$ using the  Galerkin method. Second, to  obtain numerical solutions of $(D_{\alpha})$ efficiently, we develop a L1 type \cite{li2016analysis} backward Euler-Galerkin FEM, which is $O(h+k^{2-\alpha})$ accurate, where $\alpha~ (0<\alpha<1)$ is the order of fractional time derivative,  $h$ and $k$ are the  discretization parameters for space and time directions, respectively. Next, to achieve the optimal rate of convergence in time, we propose a fractional  Crank-Nicolson-Galerkin FEM based on L2-1$_{\sigma}$ scheme \cite{alikhanov2015new}. We prove that the  numerical solutions of this scheme converge to the solution of $(D_{\alpha})$ with accuracy $O(h+k^{2})$. We also  derive a priori bounds on numerical  solutions for the proposed schemes. Finally, some numerical experiments are conducted  to validate our theoretical claims.
	\end{abstract}
\textbf{Keywords:}
Nonlocal, Finite element method (FEM),  Fractional time derivatives, Fractional Crank-Nicolson scheme, Integro-differential equations.
\section{Introduction}
Let  $\Omega$ be a convex and bounded subset of $\mathbb{R}^{d}~(d\geq 1)$ with smooth boundary $\partial \Omega$ and $[0,T]$ is a fixed finite time interval. We consider the following integro-differential equation of Kirchhoff type involving fractional time derivative of order $\alpha ~(0<\alpha <1)$ for non-homogeneous materials 
\begin{align*}\tag{$D_{\alpha}$}
^{C}D^{\alpha}_{t}u- \nabla.\left(M\left(x,t,\|\nabla u\|^{2}\right)\nabla u\right)=f(x,t)+\int_{0}^{t}b(x,t,s)u(x,s)ds \quad \text{in}~ \Omega \times (0,T],
\end{align*}
with initial and boundary conditions
\begin{align*}
u(x,0)&=u_{0}(x) \quad \text{in} ~\Omega,\\
u(x,t)&=0 \quad \text{on} ~\partial \Omega \times [0,T],
\end{align*}
where  $u:=u(x,t) :\Omega \times [0,T]\rightarrow \mathbb{R} $ is the unknown function, $M:\Omega\times [0,T]\times \mathbb{R}^{+}\rightarrow \mathbb{R}^{+},$ initial data $ u_{0}$, source term $f$ are known functions and $b(x,t,s)$ is a memory operator to be defined later. $^{C}D_{t}^{\alpha}u$ in the problem  $(D_{\alpha})$ is the fractional time derivative of order $\alpha$ in the Caputo sense, which is defined in \cite{podlubny1998fractional} as
\begin{equation}\label{LS1}
^{C}D_{t}^{\alpha}u:=\frac{1}{\Gamma(1-\alpha)} \int_{0}^{t}\frac{1}{(t-s)^{\alpha}}\frac{\partial u(s)}{\partial s}ds,
\end{equation}
where $\Gamma(\cdot)$ denotes the gamma function. For the case $\alpha=1$, Lalit et al. \cite{kumar2020finite} proposed linearized backward Euler-Galerkin FEM and linearized Crank-Nicolson-Galerkin FEM with accuracy $O(h+k)$ and $O(h+k^{2})$, respectively.\\

\noindent There are various notions of  fractional derivatives other than the Caputo derivative, which include Riemann-Liouville, Gr\"{u}nwald-Letnikov, Weyl, Marchaud, and Riesz fractional derivatives \cite{ podlubny1998fractional, miller1993introduction}. Among these,  Caputo fractional derivative and Riemann-Liouville fractional derivative are the  most commonly found in literature \cite{podlubny1998fractional}. These two fractional derivatives are related to each other by the following relation, see \cite{podlubny1998fractional}
\begin{equation}\label{1.2}
^{C}D_{t}^{\alpha}u(t)=~^{R}D_{t}^{\alpha}\left(u(t)-u(0)\right),
\end{equation} 
where  $^{R}D_{t}^{\alpha}u$ is the Riemann-Liouville fractional derivative defined by 
\begin{equation}
^{R}D_{t}^{\alpha}u(t):=\frac{1}{\Gamma(1-\alpha)}\frac{d}{dt}\int_{0}^{t}\frac{1}{(t-s)^{\alpha}}u(s)ds .
\end{equation}
It is worth noting that the fractional derivatives other than
 Caputo fractional derivative  require initial condition containing the limiting value of fractional derivatives at $t=0$ \cite{podlubny1998fractional}, which has no physical interpretation. An advantage of choosing Caputo fractional derivative in the problem $(D_{\alpha})$ is that it allows the initial and boundary conditions in the same way as those for integer order differential equations. \\

\noindent There are many physical and biological  processes in which the mean-squared displacement of the particle motion grows only sublinearly with time $t$, instead of linear growth. For instance, acoustic wave propagation in viscoelastic materials \cite{mainardi2010fractional}, cancer invasion system \cite{manimaran2019numerical}, anomalous diffusion transport \cite{metzler2000random},  which cannot be  described  accurately by classical  models having integer order derivatives.  Therefore the study of fractional differential  equations has  evolved immensely in  recent years.\\

\noindent  Problems involving fractional time derivatives  have been studied by many researchers, for instance, see   \cite{schneider1989fractional,huang2005time, giga2017well}. 
Analytical solutions of fractional differential equations are expressed in terms of Fox $H$-functions, Green functions, and hypergeometric functions. Such special functions are more complex to compute, which restricts the applications of fractional calculus in applied sciences. This motivates the researchers to develop numerical algorithms for solving fractional differential equations.\\

\noindent There are two predominant discretization techniques in time for fractional differential equations, namely Gr\"{u}nwald-Letnikov approximation \cite{dimitrov2013numerical} and L1 type approximation \cite{alikhanov2015new,lin2007finite}. The second category, namely the L1 type approximation scheme, is based on piecewise interpolation of the integrand in definition \eqref{LS1} of the Caputo fractional derivative. Lin and Xu \cite{lin2007finite} developed the L1 scheme based on piecewise linear interpolation for Caputo fractional derivative and Legendre spectral method in space for the following time fractional PDE in one space dimension
\begin{equation}\label{1.3}
\begin{aligned}
^{C}D^{\alpha}_{t}u-\frac{\partial^{2}u}{\partial x^{2}}&=f(x,t)\quad x \in (0,L),~ t \in (0,T],\\
u(x,0)&=g(x) \quad x \in (0,L),\\
u(0,t)=u(L,t)&=0 \quad 0 \leq t \leq T,
\end{aligned}
\end{equation} 
and achieved the convergence estimates of $O(h^{2}+k^{2-\alpha})$ for the solutions belonging to $C^{2}\left([0,T];H^{2}(\Omega)\cap H^{1}_{0}(\Omega)\right)$. Recently, Alikhanov  \cite{alikhanov2015new} proposed a modification of the L1 type scheme in time direction and difference scheme in space direction for the problem \eqref{1.3}. In his work, the  author proved that the convergence rate is $O(h^{2}+k^{2})$ for the solutions belonging to $C^{3}\left([0,T];C^{4}(\Omega)\right)$.\\

\noindent On a similar note, there has been considerable attention devoted to the nonlocal diffusion problems where  diffusion depends on the entire domain rather than pointwise. Lions \cite{lions1978some} studied the following problem 
\begin{equation*}
    \frac{\partial^{2}u}{\partial t^{2}}-M\left(x,\int_{\Omega}|\nabla u|^{2}dx\right)\Delta u=f(x,t) \quad \text{in}\quad \Omega \times [0,T],
\end{equation*}
which models transversal oscillations of an elastic string or membrane by considering the change in length during vibrations. Also, the nonlocal terms appear in various physical and biological systems. For instance, the temperature in a thin region during friction welding \cite{kavallaris2007behaviour}, Kirchhoff equations with magnetic field \cite{mingqi2018critical}, Ohmic heating with variable thermal conductivity \cite{tzanetis2001nonlocal}, and many more. We cite \cite{mishra2015existence,goel2019kirchhoff,correa2004existence} for some contemporary works related to the  existence, uniqueness, and regularity of the problems involving nonlocal terms. \\

\noindent The models discussed above behave accurately only for a perfectly homogeneous medium, but in real-life situations, a large number of heterogeneities are present, which causes some memory effect or feedback term. These phenomena cannot be described by classical PDEs, which motivates us to study  time fractional PDEs for non-homogeneous materials. We note  that this class of equations has not been analyzed in the literature yet, and this is the first attempt to establish new results for the model $(D_{\alpha})$.\\

\noindent The salient feature of our problem is its doubly nonlocal nature due to the presence of Kirchhoff term and fractional derivatives or  memory term. The memory term incorporates the history of the phenomena under investigation, which is crucial in evolutionary processes. This structure induces additional difficulties in the use of classical methods of PDEs. In the  first part of this article, we show the existence and uniqueness of weak solutions for the problem $(D_{\alpha})$ by applying the Galerkin method. Further, we discretize the domain in space direction by using a conforming FEM \cite{thomee1984galerkin}, keeping the time variable continuous. We define a new Ritz-Volterra type operator as an intermediate function and  derive error estimates of $O(h)$ for semi-discrete  solutions of the problem $(D_{\alpha})$.\\

\noindent Finally, we find numerical solutions of $(D_{\alpha})$ by discretizing the domain in  the time direction also. Qiao et al. \cite{qiao2019adi} studied $(D_{\alpha})$ having weakly singular kernel and without Kirchhoff term and derived $O(h^{2}+k^{2-\alpha})$ convergence rate by using difference schemes in space and L1 type scheme in the time direction. For the case $b=0$ in $(D_{\alpha})$, Manimaran et al. \cite{manimaran2021finite} used L1 type methods for fractional derivative and Newton methods for nonlinearity  to obtain $O(h^{2}+k^{2-\alpha})$ accuracy  for sufficiently smooth solutions. \\

\noindent The novelty of our  work is to obtain the optimal rate of convergence $O(k^{2})$ in time for the numerical solutions of the problem $(D_{\alpha})$. For nonlinear PDEs, linearization techniques have the advantage over Newton's iterations method in terms of computational cost and storage. This is the motivation for us to develop a linearized fully discrete formulation based on the Galerkin FEM in space and the L1 type scheme in the time direction. This scheme  is proved to be accurate of $O(h+k^{2-\alpha})$. Further, to increase the accuracy in time, we propose a new linearized  numerical algorithm based on L2-1$_{\sigma}$ scheme. In this scheme, we  prove that numerical solutions converge to the exact solution of $(D_{\alpha})$ with the optimal  rate of accuracy $O(h+k^{2})$. The presence of Kirchhoff term makes the problem nonlinear which requires high computational efforts to obtain the numerical solutions of ($D_{\alpha}$) at each time step. Also the discretization of  memory term demands large computer storage. To resolve these issues, we  come up with a  linearization techniques for approximation of  Kirchhoff term and memory term is approximated by modified Simpson's rule \cite{pani1992numerical}. Further, some numerical experiments are carried out to confirm the theoretical findings. In addition, we remark that to the best of our knowledge, there is no article that has established convergence of  $O(h+k^{2})$ to the time fractional PDEs involving memory term.\\

\noindent Turning to the layout of this paper, in Section 2, we provide  some preliminaries and state the main contributions of this work. Section 3 includes the proof of the existence and uniqueness of weak solutions for the problem $(D_{\alpha})$. In Section 4, the corresponding semi-discrete formulation is studied. A priori bounds and error estimates for semi-discrete solutions are derived, which are uniform in time. In Section 5, we develop  a new L1 type Galerkin FEM and derive a priori bounds on numerical solutions of $(D_{\alpha})$ and prove that our numerical algorithm  has a convergence rate of $O(h+k^{2-\alpha})$. In Section 6, we establish the optimal rate of convergence $O(h+k^{2})$ in time by developing a new fractional Crank-Nicolson-Galerkin FEM. Section 7 contains numerical experiments that  reveal that theoretical results are sharp. Finally, we conclude this work in Section 8. \\

\noindent In this paper, for any  two quantities, $a$ and $b$, the symbol $a \lesssim b$ means  that there is a  generic positive constant $K$ such that $a\leq K b$. Here $K$  may vary at different  stages but is independent of mesh parameters $h$ and $k$.

\section{Preliminaries and main results}
This section of the article is intended to provide some preliminaries for time fractional PDEs, which will be used in subsequent  sections. Further, we state the main results of this article. \\
\noindent Let $L^{2}(\Omega)$ be the standard Lebesgue space of square integrable functions with norm $\|\cdot\|$, induced by the inner product $(\cdot,\cdot)$, and $H^{1}_{0}(\Omega)$ is a standard Hilbert space. Let us denote 
\begin{equation}\label{2.1}
k(t):=\frac{t^{-\alpha}}{\Gamma(1-\alpha)},
\end{equation}
and $\ast$ denotes the convolution of two  integrable functions $g$ and $h$ on $[0,T]$ as follows
\begin{equation}\label{354}
    (g\ast h)(t)=\int_{0}^{t}g(t-s)h(s)ds\quad \text{for}~~t~~\text{in}~~[0,T].
\end{equation}
\begin{rmk}\label{rmk1}
	Note that, $l(t)$ defined by  $l(t):=\frac{t^{\alpha-1}}{\Gamma(\alpha)}$ satisfies $k\ast l=1$.
\end{rmk}

\noindent Now using \eqref{2.1}, \eqref{354}, and \eqref{1.2}, we rewrite the problem $(D_{\alpha})$ in $\Omega \times (0,T]$ as 
\begin{equation}\tag{$R_{\alpha}$}
\begin{aligned}
\frac{d}{dt}\left[k\ast (u-u_{0})\right](t)- \nabla.\left(M\left(x,t,\|\nabla u\|^{2}\right)\nabla u\right)&=f(x,t)+\int_{0}^{t}b(x,t,s)u(x,s)ds,
\end{aligned}
\end{equation}
with initial and boundary conditions
\begin{equation*}
    \begin{aligned}
    u(x,0)&=u_{0}(x) \quad \text{in} ~\Omega,\\
    u(x,t)&=0 \quad \text{on} ~\partial \Omega \times [0,T].\\
    \end{aligned}
\end{equation*}

\noindent Throughout the paper, we assume the following hypotheses on data:\\
(H1)\quad $u_{0}\in H^{1}_{0}(\Omega)$ and $f \in L^{\infty}\left(0,T,L^{2}(\Omega)\right)$.\\
(H2)\quad $M: \Omega \times (0,T]\times \mathbb{R}^{+}\rightarrow \mathbb{R}^{+}$ is a Lipschitz continuous function such that  there exists a constant $m_{0}$ which satisfies 
\begin{equation*}\label{1.1}
M(x,t,s)\geq m_{0} >0 ~\text{for all }~ (x,t,s) \in \Omega \times (0,T]\times \mathbb{R}^{+} ~\text{and} \left(m_{0}-4L_{M}\hat{K}^{2}\right)>0,
\end{equation*}
where $\hat{K}=\left(\|\nabla u_{0}\|+\|f\|_{L^{\infty}\left(0,T,L^{2}(\Omega)\right)}\right)$ and $L_{M}$ is a Lipschitz constant.\\
(H3)\quad  $b(x,t,s)$ is a second order partial differential operator of the form of 
\begin{equation*}
b(x,t,s)u(x,s)=-\nabla\cdot(b_{2}(x,t,s)\nabla u(x,s))+\nabla \cdot(b_{1}(x,t,s)u(x,s))+b_{0}(x,t,s)u(x,s),
\end{equation*}
with $b_{2}:\Omega\times(0,T]\times (0,T] \rightarrow \mathbb{R}^{d\times d}$ is a matrix with entries $[b_{2}^{ij}(x,t,s)]$, $b_{1}:\Omega \times(0,T]\times (0,T] \rightarrow \mathbb{R}^{d}$ is a vector with entries $[b_{1}^{j}(x,t,s)]$ and  $b_{0}: \Omega\times (0,T]\times (0,T] \rightarrow \mathbb{R}$ is a scalar function. We assume that $b_{2}^{ij}, b_{1}^{j}, b_{0}$  are smooth functions  in all variables for $i,j=1,2,\dots,d$.\\

\noindent The notion of weak formulation for the problem $(R_{\alpha})$ is described as follows:\\
find 
$u$ in $L^{\infty}\left(0,T,L^{2}(\Omega)\right) \cap L^{2}\left(0,T,H^{1}_{0}(\Omega)\right)$ and $\frac{d}{dt}\left[k\ast (u-u_{0})\right](t)$ in $L^{2}\left(0,T,H^{-1}(\Omega)\right) $ such that the following holds for all $v$ in $H^{1}_{0}(\Omega)$ and $a.e. ~t$ in $(0,T]$
\begin{equation}\tag{$W_{\alpha}$}
\begin{aligned}
\left(\frac{d}{dt}\left[k\ast (u-u_{0})\right](t),v\right)&+ \left(M\left(x,t,\|\nabla u\|^{2}\right)\nabla  u,\nabla v\right)\\
&=\left(f,v\right)+\int_{0}^{t}B(t,s,u,v)ds~ \text{in}~ \Omega \times (0,T],
\end{aligned}
\end{equation}
with initial and boundary conditions
\begin{equation*}
\begin{aligned}
u(x,0)&=u_{0}(x) \quad \text{in} ~\Omega,\\
u(x,t)&=0 \quad \text{on} ~\partial \Omega \times [0,T],
\end{aligned}
\end{equation*}
where 
\begin{equation*}
\begin{aligned}
B(t,s,u,v)&=\left( b_{2}(x,t,s)\nabla u(s),\nabla v\right)+\left(\nabla \cdot( b_{1}(x,t,s)u(s)),v\right)+\left(b_{0}(x,t,s)u(s),v\right),\\
\left(b_{2}(x,t,s)\nabla u(s),\nabla v\right)&=\int_{\Omega}b_{2}(x,t,s)\nabla u(s)\cdot \nabla v dx,\\
\left(\nabla \cdot (b_{1}(x,t,s)u(s)),v\right)&=\int_{\Omega}\left(\nabla \cdot (b_{1}(x,t,s)u(s))\right)v dx,\\
\left(b_{0}(x,t,s)u(s),v\right)&
=\int_{\Omega}b_{0}(x,t,s)u(s)v dx.
\end{aligned}
\end{equation*}
The assumptions on the coefficients $b_{0}, b_{1}$, and $b_{2}$ imply that there exists a positive constant $B_{0}$ such that for $(t,s)$ in $(0,T)\times (0,T)$ and $u,v$ in $H^{1}_{0}(\Omega)$ the following inequality holds 
\begin{equation}\label{2.3}
|B(t,s,u,v)|\leq B_{0}\|\nabla u\|\|\nabla v\| 
\end{equation} 
We remark that the following examples of $b(x,t,s)$ satisfy (H3) and \eqref{2.3}.
\begin{exam}
	Let $\Omega$ be a bounded domain in $\mathbb{R}^{2}, x=(x_{1},x_{2})$ in $\Omega$ and $t,s$ in $[0,T]$
	\begin{enumerate}
		\item $b_{2}(x,t,s)=e^{t-s}\begin{bmatrix}
		-1& 0\\
		0 &-1
		\end{bmatrix}$; $b_{1}(x,t,s)=b_{0}(x,t,s)=0.$
		\item  $b_{2}(x,t,s)=e^{t-s}\begin{bmatrix}
		0& -x_{1}x_{2}\\
		-x_{1}x_{2} &0
		\end{bmatrix}$; $b_{1}(x,t,s)=e^{t-s}\left(x_{1}^{2},x_{2}^{2}\right);b_{0}(x,t,s)=0.$
	\end{enumerate}
	These examples were used for numerical experiments in the context of parabolic integro-differential equations in \cite{barbeiro2011h1}.
\end{exam} 

\noindent We use the following version of continuous Gr\"{o}nwall's inequality to derive the bounds on the weak  solutions of $(D_{\alpha})$.
\begin{lem}\label{lem2.5}\cite{almeida2017gronwall}
	Let $\alpha$ be such that $0<\alpha<1$ and  suppose that $u$ and $v$ are two non-negative integrable functions on $[a,b]$, $v$ is nondecreasing and $g$ is continuous function in $[a,b]$. If
	\begin{equation*}
	u(t)\leq v(t)+g(t)\int_{0}^{t}(t-s)^{\alpha-1}u(s)ds \quad \text{for all}~ t ~~\text{in}~~ [a,b],
	\end{equation*}
	then 
	\begin{equation*}
	u(t)\leq v(t)E_{\alpha}\left[g(t)\Gamma(\alpha)t^{\alpha}\right] \quad \text{for all}~ t ~~\text{in}~~ [a,b],
	\end{equation*}
	where $E_{\alpha}(\cdot)$ is the one parameter Mittag-Leffler function \cite{podlubny1998fractional}.
\end{lem}

 \noindent Further, for the semi-discrete formulation of the problem  $(D_{\alpha})$, we discretize the domain in space variable by a conforming FEM \cite{thomee1984galerkin}.
 Let $\mathbb{T}_{h}$ be a shape regular (non overlapping), quasi-uniform triangulation of $\Omega$ and $h$ be the discretization parameter in  the space direction. We define a  finite dimensional subspace $X_{h}$ of $H^{1}_{0}(\Omega)$ as 
 $$X_{h}=\{v_{h}\in C(\bar \Omega)~:~v_{h}|_{\tau}~ \text{is a linear polynomial for all}~ \tau \in \mathbb{T}_{h} ~\text{and} ~v_{h}=0 ~\text{on}~\partial \Omega\}.$$
 
 \noindent Now semi-discrete formulation for the problem $(D_{\alpha})$ is to find $u_{h}$ in $X_{h}$ such that the following holds for all $v_{h}$ in $X_{h}$ and a.e. $t$ in $(0,T]$ 
 \begin{equation}\tag{$S_{\alpha}$}
 \begin{aligned}
 \left(^{C}D^{\alpha}_{t}u_{h},v_{h}\right) &+\left(M\left(x_{h},t,\|\nabla u_{h}\|^{2}\right)\nabla u_{h},\nabla v_{h}\right)\\
 &=(f_{h},v_{h})+\int_{0}^{t}B(t,s,u_{h},v_{h})ds ~~~\text{in}~~\mathbb{T}_{h} \times (0,T],
 \end{aligned}
 \end{equation}
 with initial and boundary conditions
 \begin{equation*}
 \begin{aligned}
 u_{h}(x,0)&=R_{h}u_{0} ~~\text{in} ~~\Omega,\\
 u_{h}&=0 ~~\text{on}~~\partial \Omega\times [0,T],
 \end{aligned}
 \end{equation*}
 where $f_{h}=f(x_{h},t)$ for $x_{h}$ in $\mathbb{T}_{h}$ and  $R_{h}u_{0}$ is the Ritz projection of $u_{0}$ on $X_{h}$ defined by
 \begin{equation}\label{Ad2.3}
     \left(\nabla R_{h}u_{0},\nabla v_{h}\right)=\left(\nabla u_{0},\nabla v_{h}\right)~~\text{ for all}~~ v_{h} ~~\text{in}~~X_{h}.
 \end{equation}

\noindent Now we move to the fully discrete formulation of $(D_{\alpha})$, for that we divide the interval $[0,T]$  into sub intervals of uniform step size $k$ and $t_{n}=nk$ for $n=1,2,3,\dots,N$ with $t_{N}=T$. We approximate Caputo fractional derivative by the following schemes.\\
\noindent  \textbf{L1 Approximation Scheme \cite{li2016analysis}:} In this scheme, Caputo fractional derivative is approximated at the point $t_{n}$ using linear interpolation or backward Euler difference method  as follows
\begin{equation}\label{2.9}
\begin{aligned}
^{C}D^{\alpha}_{t_{n}}u~&=\frac{1}{\Gamma(1-\alpha)}\int_{0}^{t_{n}}\frac{1}{(t_{n}-s)^{\alpha}}\frac{\partial u}{\partial s}ds\\
&=\frac{1}{\Gamma(1-\alpha)}\sum_{j=1}^{n}\frac{u^{j}-u^{j-1}}{k}\int_{t_{j-1}}^{t_{j}}\frac{1}{(t_{n}-s)^{\alpha}}~ds+\mathbb{Q}^{n}\\
&=\frac{k^{-\alpha}}{\Gamma(2-\alpha)}\sum_{j=1}^{n}a_{n-j}\left(u^{j}-u^{j-1}\right)+\mathbb{Q}^{n}
\end{aligned}
\end{equation}
where $a_{i}=(i+1)^{1-\alpha}-i^{1-\alpha},~i\geq0$, $u^{j}=u(x,t_{j})$, and $\mathbb{Q}^{n}$ is the truncation error. \\

\noindent \textbf{L2-1$_{\sigma}$ Approximation Scheme \cite{alikhanov2015new}:} In this scheme, Caputo fractional derivative is approximated at the point $t_{n-\frac{\alpha}{2}}$ using quadratic interpolation on $[0,t_{n-1}]$ and linear interpolation on $[t_{n-1 },t_{n-\frac{\alpha}{2}}]$ as  follows

\begin{equation*}\label{2.11}
^{C}D^{\alpha}_{t_{n-\frac{\alpha}{2}}}=\tilde{\mathbb{D}}^{\alpha}_{t_{n-\frac{\alpha}{2}}}u+\tilde{\mathbb{Q}}^{n-\frac{\alpha}{2}}
\end{equation*}
where 
\begin{equation}\label{1.20} \tilde{\mathbb{D}}^{\alpha}_{t_{n-\frac{\alpha}{2}}}u=\frac{k^{-\alpha}}{\Gamma(2-\alpha)}\sum_{j=1}^{n}\tilde{c}_{n-j}^{(n)}\left(u^{j}-u^{j-1}\right) 
\end{equation}
with weights $\tilde{c}_{n-j}^{(n)}$ satisfying $\tilde{c}_{0}^{(1)}=\tilde{a}_{0}$ for $n=1$ and for $n\geq 2$
\begin{equation}
\tilde{c}_{j}^{(n)}=\begin{cases}
\tilde{a}_{0}+\tilde{b}_{1}, & j=0,\\
\tilde{a}_{j}+\tilde{b}_{j+1}-\tilde{b}_{j}, & 1\leq j \leq n-2,\\
\tilde{a}_{j}-\tilde{b}_{j},& j=n-1,
\end{cases}
\end{equation}
where 
\begin{equation*}
\tilde{a}_{0}=\sigma^{1-\alpha}~\text{and}~
\tilde{a}_{l}=\left(l+\sigma\right)^{1-\alpha}-\left(\sigma\right)^{1-\alpha} \quad l\geq 1,
\end{equation*}
\begin{equation*}
\tilde{b}_{l}=\frac{1}{(2-\alpha)}\left[\left(l+\sigma\right)^{2-\alpha}-\left(l-\frac{\alpha}{2}\right)^{2-\alpha}\right]-\frac{1}{2}\left[\left(l+\sigma\right)^{1-\alpha}+\left(l-\frac{\alpha}{2}\right)^{1-\alpha}\right]~l\geq 1,
\end{equation*}
with $\sigma=\left(1-\frac{\alpha}{2}\right)$  and $\tilde{\mathbb{Q}}^{n-\frac{\alpha}{2}}$ is the  truncation error.\\

\noindent Similar to the continuous case (Remark-\ref{rmk1}), there exists a discrete kernel corresponding to above schemes  which satisfies the following properties.  

\begin{lem}\label{L1.2} \textbf{(For L1 type approximation \cite{li2016analysis})}
	Let $p_{n}$ be a sequence defined by 
	\begin{equation*}
	p_{0}=1,~~	p_{n}=\sum_{j=1}^{n}(a_{j-1}-a_{j})p_{n-j}~~\text{for}~~n\geq 1.
	\end{equation*}
	Then $p_{n}$ satisfies
		 \begin{equation}
0<p_{n}<1,
		\end{equation} 
		\begin{equation}\label{2.81}
		\sum_{j=k}^{n}p_{n-j}a_{j-k}=1,\quad 1\leq k \leq n,
		\end{equation}
		\begin{equation}\label{2.91}
		\Gamma(2-\alpha)\sum_{j=1}^{n}p_{n-j}\leq \frac{n^{\alpha}}{\Gamma(1+\alpha)}.
		\end{equation}	
\end{lem}
\begin{lem}\label{L1.21} \textbf{(For L2-1$_{\sigma}$ type approximation \cite{liao2019discrete})} Define
	\begin{equation*}
	\tilde{p}_{0}^{(n)}=\frac{1}{\tilde{c}_{0}^{(n)}},~~	\tilde{p}_{j}^{(n)}=\frac{1}{\tilde{c}_{0}^{(n-j)}}\sum_{k=0}^{j-1}\left(\tilde{c}_{j-k-1}^{(n-k)}-\tilde{c}_{j-k}^{(n-k)}\right)\tilde{p}_{k}^{(n)}~~\text{for}~~1\leq j\leq n-1.
	\end{equation*}
	Then $\tilde{p}_{j}^{(n)}$ satisfies
		 \begin{equation}
0< \tilde{p}_{n-j}^{(n)} <1,
		\end{equation} 
		\begin{equation}\label{2.812}
		\sum_{j=k}^{n}\tilde{p}_{n-j}^{(n)}\tilde{c}_{j-k}^{(j)}=1,\quad 1\leq k \leq n \leq N,
		\end{equation}
		\begin{equation}\label{2.914}
		\Gamma(2-\alpha)\sum_{j=1}^{n}\tilde{p}_{n-j}^{(n)}\leq \frac{n^{\alpha}}{\Gamma(1+\alpha)}, \quad 1\leq n \leq N.
		\end{equation}	
\end{lem}

\noindent With this background, we state the main contributions of this article.
\begin{thm}\label{t2.5}
	\textbf{(Well-posedness of weak formulation (W$_{\alpha}$))} Under the hypotheses of (H1), (H2), and (H3),  the problem $(W_{\alpha})$ admits a unique  solution  which satisfies the following a priori bounds
	\begin{equation*}
	\|u\|^{2}_{L^{\infty}\left(0,T,L^{2}(\Omega)\right)}+\frac{1}{\Gamma(\alpha)}\int_{0}^{t}(t-s)^{\alpha-1}\|\nabla u(s)\|^{2}ds \lesssim \left(\|\nabla u_{0}\|^{2}+\|f\|^{2}_{L^{\infty}(0,T,L^{2}(\Omega))}\right),
	\end{equation*}
	\begin{equation*}
	\|u\|^{2}_{L^{\infty}(0,T,H^{1}_{0}(\Omega))}+\frac{1}{\Gamma(\alpha)}\int_{0}^{t}(t-s)^{\alpha-1}\|\Delta u(s)\|^{2}ds \lesssim \left(\|\nabla u_{0}\|^{2}+\|f\|^{2}_{L^{\infty}(0,T,L^{2}(\Omega))}\right).
	\end{equation*} 
\end{thm}
\noindent Now we derive  the error estimates of semi-discrete solutions and fully discrete solutions by assuming that the  solution of $(D_{\alpha})$ is in $C^{4}\left([0,T];H^{2}(\Omega)\cap H^{1}_{0}(\Omega)\right)$.
\begin{thm}\label{t2.6}
	\textbf{(Error estimates for semi-discrete  formulation (S$_{\alpha}$))} Suppose that  the hypotheses (H1), (H2), and (H3) hold. Then  we have the following error estimates  for the solution  $u_{h}$ 
of the problem 	$(S_{\alpha})$, which is  uniform in time
\begin{equation*}
	\|u-u_{h}\|_{L^{\infty}\left(0,T,L^{2}(\Omega)\right)}+\left(\frac{1}{\Gamma(\alpha)}\int_{0}^{t}(t-s)^{\alpha-1}\|\nabla u(s)-\nabla u_{h}(s)\|^{2}ds\right)^{1/2} \lesssim  h
	\end{equation*}
	provided that $u(t)$ is in $H^{2}(\Omega)\cap H^{1}_{0}(\Omega)$ for a.e. $t$ in $[0,T]$ and triangulation of the domain is quasi-uniform.
\end{thm}
\noindent Next, we  develop the two different types of numerical algorithms to find the numerical solutions of the problem $(D_{\alpha})$.

\noindent \textbf{ Backward Euler-Galerkin FEM based on L1 scheme:} Find $u_{h}^{n} ~(n=1,2,3,\dots,N)$ in $X_{h}$ with $\bar{u}_{h}^{n-1}=2u_{h}^{n-1}-u_{h}^{n-2}$ such that the following equations hold for all $v_{h}$ in $X_{h}$ \\
For $n\geq 2,$
\begin{equation}\tag{$E_{\alpha}$}
\left(\mathbb{D}^{\alpha}_{t}u_{h}^{n},v_{h}\right)+\left(M\left(x_{h},t_{n},\|\nabla \bar{u}_{h}^{n-1}\|^{2}\right)\nabla u_{h}^{n},\nabla v_{h}\right)=\left(f_{h}^{n},v_{h}\right)+\sum_{j=1}^{n-1}w_{nj}B\left(t_{n},t_{j},u_{h}^{j},v_{h}\right).
\end{equation}
For $n=1$,
\begin{equation*}
    \begin{aligned}
\left(\mathbb{D}^{\alpha}_{t}u_{h}^{1},v_{h}\right)+\left(M\left(x_{h},t_{1},\|\nabla u_{h}^{1}\|^{2}\right)\nabla u_{h}^{1},\nabla v_{h}\right)=\left(f_{h}^{1},v_{h}\right)+kB\left(t_{1},t_{0},u_{h}^{0},v_{h}\right),
\end{aligned}
\end{equation*}
 with initial and boundary conditions
\begin{equation*}
  \begin{aligned}
u_{h}^{0}&=R_{h}u_{0}~~\text{in}~~~X_{h},\\
u_{h}^{n}&=0 ~~\text{on}~~ \partial \Omega ~~~\text{for}~~n=1,2,3,\dots,N.
\end{aligned}
\end{equation*}
where $\mathbb{D}^{\alpha}_{t}u_{h}^{n}$ is the approximation operator defined in \eqref{2.9} as 
\begin{equation}\label{S1.18}
\begin{aligned}
\mathbb{D}^{\alpha}_{t}u_{h}^{n}&=\frac{k^{-\alpha}}{\Gamma(2-\alpha)}\sum_{j=1}^{n}a_{n-j}\left(u_{h}^{j}-u_{h}^{j-1}\right),
\end{aligned}
\end{equation}
 \noindent and $w_{nj}$ are the quadrature weights satisfying the  following modified Simpson's rule \cite{pani1992numerical}. Let $m_{1}=[k^{-1/2}]$, where $[\cdot]$ denotes the greatest integer  function. Set $k_{1}=m_{1}k$ and $\bar{t}_{j}=jk_{1}$. Let $j_{n}$ be the largest even integer such that $\bar{t}_{j_{n}} <t_{n}$ and introduce quadrature points 
 \[
\bar{t}_{j}^{n}=\begin{cases}
 &jk_{1},\quad 0\leq j\leq j_{n},\\
 &\bar{t}_{j}^{n}+(j-j_{n})k,\quad j_{n}\leq j\leq J_{n},
 \end{cases}
 \]
 where $\bar{t}_{J_{n}}^{n}=t_{n-1}$.
 Then quadrature rule for any function $g$ is given by 
\begin{equation}\label{S1.14}
\begin{aligned}
\int_{0}^{t_{n}}g(s)~ds&=\sum_{j=0}^{n-1}w_{nj}g(t_{j})+q^{n}(g)\\
&=\frac{k_{1}}{3}\sum_{j=1}^{j_{n}/2}\left[g(\bar{t}_{2j}^{n})+4g(\bar{t}_{2j-1}^{n})+g(\bar{t}_{2j-2}^{n})\right]\\
&+\frac{k}{2}\sum_{j=j_{n}+1}^{J_{n}}\left[g(\bar{t}_{j}^{n})+g(\bar{t}_{j-1}^{n})\right]+kg(\bar{t}_{J_{n}}^{n})+q^{n}(g)
\end{aligned}
\end{equation}
where $q^{n}(g)$ is the quadrature error associated with the function $g$ at $t_{n}$. 
\begin{thm}\label{J1}
	\textbf{(Convergence Estimates for the Scheme (E$_{\alpha}$))} Under the hypotheses of (H1), (H2), and (H3), the fully discrete solution  $u_{h}^{n}~(1\leq n \leq N)$ of $(E_{\alpha})$ converges to the solution $u$ of $(D_{\alpha})$ with the following rate of accuracy
	\begin{equation*}
	\max_{1\leq n \leq N}\|u(t_{n})-u_{h}^{n}\|+\left(k^{\alpha}\sum_{n=1}^{N}p_{N-n}\|\nabla u(t_{n}) -\nabla u_{h}^{n}\|^{2}\right)^{1/2}\lesssim (h+k^{2-\alpha}).
	\end{equation*}
	
\end{thm}
\noindent  At this point we observe that the  convergence rate is of $O(k^{2-\alpha})$ in temporal direction which is not optimal. Thus to obtain the optimal rate of  convergence, we propose  the following numerical scheme. 

\noindent \textbf{Fractional Crank-Nicolson-Galerkin FEM based on L2-1$_\sigma$ scheme:} Find $u_{h}^{n}~(n=1,2,3\dots,N)$ in $X_{h}$ with $\bar{u}_{h}^{n-1,\alpha}=\left(2-\frac{\alpha}{2}\right)u_{h}^{n-1}-\left(1-\frac{\alpha}{2}\right)u_{h}^{n-2}$ and $\hat{u}_{h}^{n,\alpha}=\left(1-\frac{\alpha}{2}\right)u_{h}^{n}+\left(\frac{\alpha}{2}\right)u_{h}^{n-1}$ such that the following equations hold for all $v_{h}$ in $X_{h}$\\
For $n\geq 2$,
\begin{equation}\tag{$F_{\alpha}$}
\begin{aligned}
\left(\tilde{\mathbb{D}}^{\alpha}_{t_{n-\frac{\alpha}{2}}}u_{h}^{n},v_{h}\right)+\left(M\left(x_{h},t_{n-\frac{\alpha}{2}},\|\nabla \bar{u}_{h}^{n-1,\alpha}\|^{2}\right)\nabla \hat{u}_{h}^{n,\alpha},\nabla v_{h}\right)&=\sum_{j=1}^{n-1}\tilde{w}_{nj}B\left(t_{n-\frac{\alpha}{2}},t_{j},u_{h}^{j},v_{h}\right)\\
&+\left(f_{h}^{n-\frac{\alpha}{2}},v_{h}\right).
\end{aligned}
\end{equation}
For $n=1$,
\begin{equation*}
\begin{aligned}
\left(\tilde{\mathbb{D}}^{\alpha}_{t_{1-\frac{\alpha}{2}}}u_{h}^{1},v_{h}\right)+\left(M\left(x_{h},t_{1-\frac{\alpha}{2}},\|\nabla u_{h}^{1}\|^{2}\right)\nabla u_{h}^{1},\nabla v_{h}\right)&=\left(1-\frac{\alpha}{2}\right)kB\left(t_{1-\frac{\alpha}{2}},t_{0},u_{h}^{0},v_{h}\right)\\
&+\left(f_{h}^{1-\frac{\alpha}{2}},v_{h}\right),
\end{aligned}
\end{equation*}
 with initial and boundary conditions
\begin{equation*}
  \begin{aligned}
u_{h}^{0}&=R_{h}u_{0}~~\text{in}~~~X_{h},\\
u_{h}^{n}&=0 ~~\text{on}~~ \partial \Omega ~~~\text{for}~~n=1,2,3,\dots,N,
\end{aligned}
\end{equation*}
where $\tilde{\mathbb{D}}^{\alpha}_{t_{n-\frac{\alpha}{2}}}u_{h}^{n}$ is the approximation operator defined in \eqref{1.20} and $\tilde{w}_{nj}$ is the quadrature weights satisfying the quadrature formula similar to \eqref{S1.14} with small modification as follows
\begin{equation}\label{S1.14AD}
\begin{aligned}
\int_{0}^{t_{n}}g(s)~ds&=\sum_{j=0}^{n-1}\tilde{w}_{nj}g(t_{j})+\tilde{q}^{n-\frac{\alpha}{2}}(g)\\
&=\frac{k_{1}}{3}\sum_{j=1}^{j_{n}/2}\left[g(\bar{t}_{2j}^{n})+4g(\bar{t}_{2j-1}^{n})+g(\bar{t}_{2j-2}^{n})\right]\\
&+\frac{k}{2}\sum_{j=j_{n}+1}^{J_{n}}\left[g(\bar{t}_{j}^{n})+g(\bar{t}_{j-1}^{n})\right]+\left(1-\frac{\alpha}{2}\right)kg(\bar{t}_{J_{n}}^{n})+\tilde{q}^{n-\frac{\alpha}{2}}(g),
\end{aligned}
\end{equation}
where $\tilde{q}^{n-\frac{\alpha}{2}}(g)$ is the quadrature error associated with the function $g$ at $t_{n-\frac{\alpha}{2}}$. 
\begin{thm}\label{J2}
	\textbf{(Convergence Estimates for the Scheme (F$_{\alpha}$))} Under the assumptions of (H1), (H2), and (H3), the fully discrete solution  $u_{h}^{n}~(1\leq n \leq N)$ of $(F_{\alpha})$ satisfies the following convergence estimates
	\begin{equation*}
	\max_{1\leq n \leq N}\|u(t_{n})-u_{h}^{n}\|+\left(k^{\alpha}\sum_{n=1}^{N}\tilde{p}_{N-n}^{(N)}\|\nabla u(t_{n})-\nabla u_{h}^{n}\|^{2}\right)^{1/2}\lesssim (h+k^{2}).
	\end{equation*}
\end{thm}
\begin{rmk}
	We observe that as $\alpha \rightarrow 1$, the schemes  $(E_{\alpha})$ and $(F_{\alpha})$ reduces to linearized backward Euler-Galerkin FEM and linearized Crank-Nicolson-Galerkin FEM respectively \cite{kumar2020finite}.
\end{rmk} 
\section{Weak solutions of ($D_{\alpha}$)}
In this section, we prove the existence and uniqueness of weak solutions for the problem $(R_{\alpha})$ using the Galerkin method. For this, we study the  variational formulation of $(R_{\alpha})$ and derive a priori bounds on every Galerkin sequence. These a priori bounds establish the convergence of the Galerkin sequence using  compactness lemma for time fractional PDEs \cite{li2018some}.
\noindent The following lemma discusses the approximation of the kernel $k$ defined in \eqref{2.1}.
\begin{lem}\cite{zacher2009weak} \label{L3.1}Let $k$ be the kernel defined in \eqref{2.1}, then there exists a sequence of kernels $k_{n}$ in $W^{1,1}(0,T)$ such that
\begin{enumerate}
    \item $k_{n}=ns_{n}$, with $s_{n}$ being the solution of scalar-valued Volterra equations
    \begin{equation*}
        s_{n}(t)+n(l\ast s_{n})(t)=1~~~t>0,~~~n\in \mathbb{N},
    \end{equation*}
    \item $k_{n}$ converges to $k$ in $L^{1}(0,T)$ as $n$ tends to infinity,
    \item $k_{n}$ is non-negative and non-increasing in $(0,\infty)$.
\end{enumerate}
    
\end{lem}
\begin{subsection}{Proof of the Theorem \ref{t2.5}}
\begin{proof}
	\textbf{(Existence)}~Let $\{\lambda_{i}\}_{i=1}^{m}$ be the eigenvalues and $\{\phi_{i}\}_{i=1}^{m}$ be the corresponding  eigenfunctions  of the Dirichlet problem for the standard Laplacian operator in $H^{1}_{0}(\Omega)$. We consider the finite dimensional subspace $\mathbb{V}_{m}$ of $H^{1}_{0}(\Omega)$ such that $\mathbb{V}_{m}= \text{span} \{\phi_{1}, \phi_{2}, \dots , \phi_{m}\}.$  We assume that 
	\begin{equation}\label{3.1}
	u_{m}(\cdot,t)=\sum_{j=1}^{m}\alpha_{mj}(t)\phi_{j}\quad \text{and }\quad  u_{m}(\cdot,0)=\sum_{j=1}^{m}(u_{0},\phi_{j})\phi_{j},
	\end{equation}
	then $u_{m}(\cdot,0)$ converges to $u_{0}$. Now we  show that for each $m$ there is $u_{m} \in \mathbb{V}_{m}$,  satisfying the following equation for all $v_{m} \in \mathbb{V}_{m}~ \text{and}~ a.e. ~t \in (0,T]$
	\begin{equation}\label{3.2}
	\begin{aligned}
	\left(\frac{d}{dt}\left[k\ast (u_{m}-u_{m}(0))\right](t),v_{m}\right)&+\left(M(x,t,\|\nabla u_{m}\|^2)\nabla u_{m}, \nabla v_{m}\right)\\
	&=(f,v_{m})+ \int_{0}^{t}B(t,s,u_{m},v_{m})ds.
	\end{aligned}
	\end{equation}
	Put the values of $u_{m}$ and $u_{m}(0)$ in \eqref{3.2}, we obtain a system of fractional order differential equations. Then by the theory of fractional order differential equations, the system \eqref{3.2} has a continuous solution $u_{m}(t)$ on some interval $[0,t_{n}), 0<t_{n}< T,$ with vanishing trace of  $k\ast(u_{m}-u_{m}(0))$ at $t=0$ \cite{zacher2009weak}. These local solutions $u_{m}(t)$ are extended to the whole interval by using the following a priori bounds on $u_{m}$. Put $v_{m}=u_{m}(t)$  in \eqref{3.2},  we obtain
	\begin{equation}\label{3.3}
	\begin{aligned}
	\left(\frac{d}{dt}\left[k\ast (u_{m}-u_{m}(0))\right](t),u_{m}\right)&+\left(M(x,t,\|\nabla u_{m}\|^2)\nabla u_{m}, \nabla u_{m}\right)\\
	&=(f,u_{m})+ \int_{0}^{t}B(t,s,u_{m},u_{m})ds.
	\end{aligned}
	\end{equation}
	Let $k_{n}, n\in \mathbb{N}$ be the kernels  defined in Lemma \ref{L3.1}, then equation \eqref{3.3} is rewritten as  
		\begin{equation}\label{3.4Ad}
	\begin{aligned}
	\left(\frac{d}{dt}(k_{n}\ast u_{m})(t),u_{m}\right)&+\left(M(x,t,\|\nabla u_{m}\|^2)\nabla u_{m}, \nabla u_{m}\right)\\
	&=(f,u_{m})+ \int_{0}^{t}B(t,s,u_{m},u_{m})ds+k_{n}(t)(u_{m}(0),u_{m})+h_{mn}(t),
	\end{aligned}
	\end{equation}
	with
	\begin{equation}
	    h_{mn}(t)=	\left(\frac{d}{dt}\left[k_{n}\ast (u_{m}-u_{m}(0))\right](t)-	\frac{d}{dt}\left[k\ast (u_{m}-u_{m}(0))\right](t),u_{m}\right).
	\end{equation}
	Using Lemma 2.1 from \cite{zacher2009weak} and (H2), (H3)  together with Cauchy-Schwarz and  Young's inequality, we get
	\begin{equation}\label{3.4}
	\begin{aligned}
	\frac{d}{dt}\left(k_{n}\ast \|u_{m}\|^{2}\right)(t)+m_{0}\|\nabla u_{m}\|^{2}&\leq \|f\|^{2}+                    \|u_{m}\|^{2}+k_{n}(t)\|u_{m}(0)\|^{2}\\
	&+\frac{B_{0}^{2}T}{m_{0}}\int_{0}^{t}\|\nabla u_{m}(s)\|^{2}ds+2h_{mn}(t).
	\end{aligned}
	\end{equation} 
	Now convolve \eqref{3.4} with the kernel $l$ defined in Remark \ref{rmk1} and use the fact that 
	\begin{equation}
	    l\ast \frac{d}{dt}\left(k_{n}\ast \|u_{m}\|^{2}\right)(t)=\frac{d}{dt}\left(k_{n}\ast l \ast \|u_{m}\|^{2}\right)(t)\rightarrow \frac{d}{dt}\left(k\ast l\ast \|u_{m}\|^{2}\right)(t)=\|u_{m}\|^{2},
	\end{equation}
	with 
	\begin{equation}
	    (l\ast k_{n})(t)\rightarrow (l \ast k)(t)= 1 ~~\text{and}~~(l\ast h_{mn})(t)\rightarrow 0~~\text{as}~~n \rightarrow \infty ~~\text{in}~~L^{1}(0,T),
	\end{equation}
	we get 
	\begin{equation*}
	\begin{aligned}
	\|u_{m}\|^{2}+\left(l\ast\|\nabla u_{m}\|^{2}\right)(t)&\lesssim \int_{0}^{t}(t-s)^{\alpha-1}\left(\|u_{m}(s)\|^{2}+\left(l\ast\|\nabla u_{m}\|^{2}\right)(s)\right)ds\\ &+\left(l\ast\|f\|^{2}\right)(t)+\|u_{0}\|^{2}.
	\end{aligned}
	\end{equation*} 
	Using Lemma \ref{lem2.5} and Poincar\'{e} inequality, we conclude that
	\begin{equation}\label{3.5}
	\begin{aligned}
	\|u_{m}\|^{2}_{L^{\infty}\left(0,T,L^{2}(\Omega)\right)}+\left(l\ast\|\nabla u_{m}\|^{2}\right)(t)&\lesssim \left(\| \nabla u_{0}\|^{2}+\|f\|^{2}_{L^{\infty}\left(0,T,L^{2}(\Omega)\right)}\right).
	\end{aligned}
	\end{equation} 
	Further, we introduce discrete Laplacian $\Delta_{m}: \mathbb{V}_{m}\rightarrow \mathbb{V}_{m}$ as follows
	\begin{equation}
	\left(-\Delta_{m}u_{m},v_{m}\right):=\left(\nabla u_{m},\nabla v_{m}\right)\quad \text{for all}~ u_{m},v_{m} ~\text{in}~ \mathbb{V}_{m}.
	\end{equation}
	Now take $v_{m}=-\Delta_{m}u_{m}$ in \eqref{3.2} and follow the same arguments as we prove estimate \eqref{3.5} to achieve 
	\begin{equation}\label{3.9}
	\|u_{m}\|^{2}_{L^{\infty}\left(0,T,H^{1}_{0}(\Omega)\right)}+\left(l\ast\|\Delta_{m}u_{m}\|^{2}\right)(t) \lesssim\left(\|\nabla u_{0}\|^{2}+\|f\|^{2}_{L^{\infty}\left(0,T,L^{2}(\Omega)\right)}\right)=\hat{K}.
	\end{equation}
	Moreover, for any $v \in H^{1}_{0}(\Omega)$, we write $v=v^{1}+v^{2}$ where $v^{1}\in$ span$\{\phi_{j}\}_{j=1}^{m}$ and $v^{2}$ satisfies $(\nabla v^{2},\nabla \phi_{j})=0$ for $j=1,2,3,\dots,m$, as $\{\phi_{j}\}_{j=1}^{m}$ are orthogonal in $H^{1}_{0}(\Omega)$. So from equation \eqref{3.2} and (H2) we have 
	\begin{equation*}
	\begin{aligned}
	\left(\frac{d}{dt}\left[k\ast (u_{m}-u_{m}(0))\right](t),v^{1}\right)+m_{0}(\nabla u_{m}, \nabla v^{1})&\leq (f,v^{1})+\int_{0}^{t}B(t,s,u_{m},v^{1})ds.
	\end{aligned}
	\end{equation*}
	Using estimates \eqref{3.5} and \eqref{3.9},  we deduce that 
	\begin{equation}\label{3.11}
	\left\|\frac{d}{dt}\left[k\ast(u_{m}-u_{m}(0))\right](t)\right\|_{L^{2}\left(0,T,H^{-1}(\Omega)\right)}\leq K.
	\end{equation}
	Thus estimates \eqref{3.5} and \eqref{3.11} provide a subsequence of $(u_{m})$ again denoted by $(u_{m})$ such that $u_{m}\rightharpoonup u$ in $L^{2}\left(0,T,H^{1}_{0}(\Omega)\right)$ and  $\frac{d}{dt}\left[k\ast (u_{m}-u_{m}(0))\right](t) \rightharpoonup \frac{d}{dt}\left[k\ast (u-u_{0})\right](t)$ in $L^{2}\left(0,T,H^{-1}(\Omega)\right)$. In the  light of estimates \eqref{3.9} and \eqref{3.11}, we apply  compactness lemma \cite{li2018some} to conclude $u_{m} \rightarrow u$ in $L^{2}\left(0,T,H^{1}_{0}(\Omega)\right)$. Now  using the fact that $M(x,t,\|\nabla u_{m}\|^{2})$ and $B(t,s,u_{m},v_{m})$ are continuous and an application of Lebesgue dominated convergence theorem, we pass the limit inside \eqref{3.2} 
	which establishes the existence of weak solutions of the problem $(D_{\alpha}).$ \\
	\noindent \textbf{(Initial Condition)} Now weak solution $u$ satisfies the following equation for all $v$ in $H^{1}_{0}(\Omega)$
\begin{equation}\label{N2}
\begin{aligned}
\left(\frac{d}{dt}\left[k\ast (u-u_{0})\right](t),v\right)+ \left(M\left(x,t,\|\nabla u\|^{2}\right)\nabla  u,\nabla v\right)&=\left(f,v\right)+\int_{0}^{t}B(t,s,u,v)ds.\\
\end{aligned}
\end{equation}
Let $\phi$ in $C^{1}\left([0,T];H^{1}_{0}(\Omega)\right)$ with $\phi(T)=0$, then multiply \eqref{N2} with $\phi$ and integrate by parts, we get 
\begin{equation}\label{N24}
\begin{aligned}
-\int_{0}^{T}\left((k\ast (u-u_{0}))(t),v\right)\phi'(t)dt&+ \int_{0}^{T}\left(M\left(x,t,\|\nabla u\|^{2}\right)\nabla  u,\nabla v\right)\phi(t)dt\\
&=\int_{0}^{T}\left(f,v\right)\phi(t)dt+\int_{0}^{T}\int_{0}^{t}B(t,s,u,v)\phi(t)dsdt\\
&+((k\ast(u-u_{0}))(0),\phi(0)).
\end{aligned}
\end{equation}
Since $C^{1}\left([0,T];H^{1}_{0}(\Omega)\right)$ is dense in $L^{2}\left(0,T,H^{1}_{0}(\Omega)\right)$, thus using \eqref{3.2} and $(k\ast(u_{m}-u_{m}(0)))$ has vanishing trace at $t=0$, we have 
\begin{equation}\label{N248}
\begin{aligned}
-\int_{0}^{T}\left((k\ast (u_{m}-u_{m}(0)))(t),v\right)&\phi'(t)dt+ \int_{0}^{T}\left(M\left(x,t,\|\nabla u_{m}\|^{2}\right)\nabla  u_{m},\nabla v\right)\phi(t)dt\\
&=\int_{0}^{T}\left(f,v\right)\phi(t)dt+\int_{0}^{T}\int_{0}^{t}B(t,s,u_{m},v)\phi(t)dsdt.
\end{aligned}
\end{equation}
Let $m$ tend  to infinity in \eqref{N248} and comparing with  \eqref{N24}, we get 
$((k\ast(u-u_{0}))(0),\phi(0))=0$. Since $\phi(0)$ is arbitrary, so we have $(k\ast(u-u_{0}))(0)=0$ which implies $u=u_{0}$ at $t=0$ \cite{zacher2009weak}.\\

\noindent \textbf{(Uniqueness)}	Suppose that $u_{1},u_{2}$ are solutions of equation $(W_{\alpha})$, then $z=u_{1}-u_{2}$ satisfies the following equation for all $ v \in H_{0}^{1}(\Omega)$ and $a.e. ~ t \in (0,T]$ 
	\begin{equation}\label{3.13}
	\begin{aligned}
	\left(\frac{d}{dt}(k\ast z)(t),v\right)+\left(M\big(x,t,\|\nabla u_{1}\|^2\big)\nabla u_{1}, \nabla v\right)&=\left(M\big(x,t,\|\nabla u_{2}\|^2\big)\nabla u_{2}, \nabla v\right)\\
	&+ \int_{0}^{t}B(t,s,z,v)ds.
	\end{aligned}
	\end{equation}
	Put $v=z(t)$ in above equation and using (H2), (H3) along with Cauchy Schwarz and Young's inequality,  we obtain
	\begin{equation}
	\begin{aligned}
	2\left(\frac{d}{dt}(k\ast z)(t),z\right)+(m_{0}-4L_{M}\hat{K}^{2})\|\nabla z\|^{2} \lesssim  
	 \int_{0}^{t}\|\nabla z(s)\|^{2}ds 
	\end{aligned}
	\end{equation}
	Now following the same lines as in the proof of  estimate \eqref{3.5} and using (H2)
	we conclude  $\|z\|_{L^{2}\left(0,T,H^{1}_{0}(\Omega)\right)}   =\|z\|_{L^{\infty}\left(0,T,L^{2}(\Omega)\right)}=0$. Thus uniqueness follows. 
\end{proof}
\end{subsection}
\section{Error estimates for semi-discrete solution}
In this section, we derive error estimates for semi-discrete solutions of the problem $(S_{\alpha})$ by discretizing the domain in space variable while keeping the time variable continuous.\\
 \noindent The existence, uniqueness, and a priori bounds  on semi-discrete solutions is established on the similar lines as for the weak solutions of $(D_{\alpha})$, therefore omitted here. For the  error estimates of  semi-discrete solutions, we define a new Ritz-Volterra type projection operator $W:[0,T]\rightarrow X_{h}$ by 
\begin{equation}\label{2.5}
\left(M\left(x,t,\|\nabla u\|^{2}\right)\nabla (u-W),\nabla v_{h}\right)=\int_{0}^{t}B(t,s,u-W,v_{h})~ds\quad \text{for all}~ v_{h}~\text{in}~ X_{h}.
\end{equation} 
Ritz-Volterra type projection  operator $W$  is well defined by the positivity of Kirchhoff term $M$.  
 \noindent The estimates on $u-W$, $W$,~ $u-R_{h}u$ are given by   the following theorems.
\begin{thm}\label{thm2.7} \cite{rannacher1982some}
	Under the assumption of uniform triangulation of $\Omega$, there exists a positive constant $C$ independent of $h$ such that the following hold for any function $u$ in $H^{2}(\Omega) \cap H_{0}^{1}(\Omega)$
	\begin{equation*}
	\|u-R_{h}u\|+h\|u-R_{h}u\|_{1} \leq Ch^{2}\|u\|_{2},
	\end{equation*}
	\begin{equation*}
	\|R_{h}u\|_{1}\leq C\|u\|_{1},
	\end{equation*}
	where $\|p\|_{1}=\|p\|+\|\nabla p\|.$
\end{thm}
\begin{thm} \label{thm2.8}\cite{cannon1988non}
	There exists a positive constant $C$ independent of $h$ such that  following estimate hold
	\begin{equation*}
	\begin{aligned}
	\|\rho(t)\|+h\|\rho(t)\|_{1} &\leq Ch^{2}\left(\|u(t)\|_{2}+\int_{0}^{t}\|u(s)\|_{2}ds\right),\\
	\|\rho_{t}(t)\|+h\|\rho_{t}(t)\|_{1} &\leq Ch^{2}\left(\|u(t)\|_{2}+\|u_{t}\|_{2}+\int_{0}^{t}\|u_{t}(s)\|_{2}ds\right),
	\end{aligned}
	\end{equation*}
	where $\rho=u-W$ and $\|p\|_{1}=\|p\|+\|\nabla p\|.$
\end{thm} 
\begin{lem}\cite{kumar2020finite}\label{lem2.8}
	Let $W$ be the Ritz-Volterra projection defined by \eqref{2.5}, then $\|\nabla W\|$ is bounded for every $t$ in $[0,T]$, i.e. 
	\begin{equation*}
	\|\nabla W\|\lesssim\|\nabla u\|.
	\end{equation*}
\end{lem}

\begin{subsection}{Proof of the Theorem \ref{t2.6}}
\begin{proof}
	Put $u_{h}=W-\theta$ in $(S_{\alpha})$, we  have
	\begin{equation*}
	\left(^{C}D^{\alpha}_{t}(W-\theta),v_{h}\right) +\left(M\left(x_{h},t,\|\nabla u_{h}\|^{2}\right)\nabla (W-\theta),\nabla v_{h}\right)=(f_{h},v_{h})+\int_{0}^{t}B(t,s,W-\theta,v_{h})ds.
	\end{equation*}
	Since $f$ in $L^{2}(\Omega)$ thus $(f,v_{h})=(f_{h},v_{h})$ for all $v_{h}$ in $X_{h}$, then by using (H2), $(W_{\alpha})$, and the definition \eqref{2.5} of Ritz-Volterra projection $W$ we get 
	\begin{equation}\label{4.4}
	\begin{aligned}
	\left(^{C}D^{\alpha}_{t}\theta,v_{h}\right) +m_{0}(\nabla \theta,\nabla v_{h})&\leq L_{M}\left(\|\nabla u\|+\|\nabla u_{h}\|\right)\left(\|\nabla u-\nabla u_{h}\|\right)(\nabla W,\nabla v_{h})\\
	&-\left(^{C}D^{\alpha}_{t}\rho,v_{h}\right)+\int_{0}^{t}B(t,s,\theta,v_{h})ds.	
	\end{aligned}
	\end{equation}
	Set $v_{h}=\theta$ in \eqref{4.4} and using (H3), estimate \eqref{3.9} together with Cauchy-Schwarz and Young's inequality, we obtain
	\begin{equation}
	\begin{aligned}
\left(^{C}D^{\alpha}_{t}\theta,v_{h}\right)+(m_{0}-4L_{M}\hat{K}^{2})\|\nabla \theta\|^{2} &\lesssim \|\nabla \rho\|^{2}+\|~^{C}D^{\alpha}_{t}\rho\|^{2}\\
&+\|\theta\|^{2}+\int_{0}^{t}\|\nabla \theta(s)\|^{2}ds. 
\end{aligned}
	\end{equation}
	Use (H2) and  apply the same arguments as we prove estimate \eqref{3.5}, to obtain 
	\begin{equation*}
\|\theta\|^{2}_{L^{\infty}\left(0,T,L^{2}(\Omega)\right)}+\left(l\ast\|\nabla \theta\|^{2}\right)(t) \lesssim \left[l\ast \left(\|\nabla \rho\|^{2}+\|~^{C}D^{\alpha}_{t}\rho\|^{2}\right)\right](t)+\|\theta(0)\|^{2}_{1}.
	\end{equation*}
	Theorems \ref{thm2.7} and \ref{thm2.8} with triangle inequality concludes 
	\begin{equation*}
	\|u-u_{h}\|^{2}_{L^{\infty}\left(0,T,L^{2}(\Omega)\right)}+\left(l\ast\|\nabla (u-u_{h})\|^{2}\right)(t) \lesssim h^{2}.
	\end{equation*}
\end{proof}
\end{subsection}
\begin{section}{Backward Euler-Galerkin FEM  based on L1 type scheme}
	In this section, we prove the well-posedness and derive convergence rate for the proposed  fully discrete formulation $(E_{\alpha})$.\\
	\noindent For $n\geq 2$, the linear system  is positive definite due to the  non-degeneracy of diffusion coefficient  $M$  which gives a unique solution of $(E_\alpha)$ and the existence of $u_{h}^{1}$ is established by the following variant of Br\"{o}uwer fixed point theorem.
	\begin{thm}\cite{thomee1984galerkin}\label{T1.3}
		Let $H$ be   finite dimensional Hilbert space. Let $G:H\rightarrow H$ be a continuous map such that $\left(G(w),w\right)>0$  for all $w$ in $H$ with $\|w\|=r,~r>0$ then there exists a $\tilde{w}$ in $H$ such that $G(\tilde{w})=0$ and $\|\tilde{w}\|\leq r.$
	\end{thm}
	\begin{thm}\label{T1.4}
		Suppose that (H1), (H2), and (H3) holds, then there exists a unique solution $u_{h}^{1}$ to the problem  $(E_{\alpha})$  which satisfies the following a priori bound 
		\begin{equation}\label{S1.21}
		\|u_{h}^{1}\|^{2}+k^{\alpha}\|\nabla u_{h}^{1}\|^{2}\lesssim \left( \|f_{h}^{1}\|^{2}+\|\nabla u_{h}^{0}\|^{2}\right)\approx \tilde{K}_{1},
		\end{equation}
		provided $k< k_{0}$ for some positive constant $k_{0}$.
			\end{thm}
		\begin{proof}
			\textbf{(Existence)} ~Let $\alpha_{0}=k^{\alpha}\Gamma(2-\alpha)$ and put the definition of  $\mathbb{D}^{\alpha}_{t}u_{h}^{1}$ in $(E_{\alpha})$, we get 
			\begin{equation*}
			\begin{aligned}
			\left(u_{h}^{1}-u_{h}^{0},v_{h}\right)&+\alpha_{0}\left(M\left(x_{h},t_{1},\|\nabla u_{h}^{1}\|^{2}\right)\nabla u_{h}^{1},\nabla v_{h}\right)=\alpha_{0}\left(f_{h}^{1},v_{h}\right)
			+\alpha_{0}kB\left(t_{1},t_{0},u_{h}^{0},v_{h}\right).
			\end{aligned}
			\end{equation*}
			Define a map $G:X_{h}\rightarrow X_{h}$ by 
			\begin{equation}\label{S1.19}
			\begin{aligned}
			\left(G\left(u_{h}^{1}\right),v_{h}\right)&=\left(u_{h}^{1},v_{h}\right)-\left(u_{h}^{0},v_{h}\right)+\alpha_{0}\left(M\left(x_{h},t_{1},\|\nabla u_{h}^{1}\|^{2}\right)\nabla u_{h}^{1},\nabla v_{h}\right)\\
			&-\alpha_{0}\left(f_{h}^{1},v_{h}\right)-\alpha_{0}kB\left(t_{1},t_{0},u_{h}^{0},v_{h}\right).
			\end{aligned}
			\end{equation}
			Then using (H2), (H3), and Cauchy-Schwarz inequality, we have 
			\begin{equation*}
			\begin{aligned}
			\left(G\left(u_{h}^{1}\right),u_{h}^{1}\right)\geq \left(\|u_{h}^{1}\|-\|u_{h}^{0}\|-\alpha_{0}\|f_{h}^{1}\|-\alpha_{0}k\|u_{h}^{0}\|\right)\|u_{h}^{1}\|.
			\end{aligned}
			\end{equation*}
			Thus for $\|u_{h}^{1}\|> \|u_{h}^{0}\|+\alpha_{0}\|f_{h}^{1}\|+k\alpha_{0}\|u_{h}^{0}\|$, we have $\left(G\left(u_{h}^{1}\right),u_{h}^{1}\right)>0$ and the map $G$ defined by \eqref{S1.19} is continuous by continuity of $M$. Hence existence follows by  Theorem \ref{T1.3}.\\
		\noindent \textbf{(A priori bound)}
			Put $v_{h}=u_{h}^{1}$ for $n=1$ in $(E_{\alpha})$ and using (H2), (H3) with Cauchy-Schwarz and Young's  inequality, we get 
			\begin{equation*}
			\begin{aligned}
			(1-k^{\alpha}\Gamma(2-\alpha))\|u_{h}^{1}\|^{2}+k^{\alpha}\|\nabla u_{h}^{1}\|^{2}&\lesssim k^{\alpha}
			\left(\|f_{h}^{1}\|^{2}+k^{2}\|\nabla u_{h}^{0}\|^{2}\right)+\|u_{h}^{0}\|^{2},
			\end{aligned}
			\end{equation*}
			then for sufficiently small $k$ such that $k^{\alpha}< \frac{1}{\Gamma(2-\alpha)}$, we conclude \eqref{S1.21}.

	\noindent 	\textbf{(Uniqueness)} Suppose that $X_{h}^{1}$ and $Y_{h}^{1}$ are the solutions of $(E_{\alpha})$ for $n=1$, then $Z_{h}^{1}=X_{h}^{1}-Y_{h}^{1}$ satisfies the following equation for all $v_{h}$ in $X_{h}$
			\begin{equation}\label{S1.22}
			\begin{aligned}
			\left(\mathbb{D}^{\alpha}_{t}Z_{h}^{1},v_{h}\right)&+\left(M\left(x_{h},t_{1},\|\nabla X_{h}^{1}\|^{2}\right)\nabla Z_{h}^{1},\nabla v_{h}\right)\\
			&=\left(\left[M\left(x_{h},t_{1},\|\nabla Y_{h}^{1}\|^{2}\right)-M\left(x_{h},t_{1},\|\nabla X_{h}^{1}\|^{2}\right)\right]\nabla Y_{h}^{1},\nabla v_{h}\right).
			\end{aligned}
			\end{equation}
			Put $v_{h}=Z_{h}^{1}$ in equation \eqref{S1.22} and using (H2) we get
			\begin{equation*}
			\frac{1}{2}\mathbb{D}^{\alpha}_{t}\|Z_{h}^{1}\|^{2}+m_{0}\|\nabla Z_{h}^{1}\|^{2}\leq L_{M}\|\nabla Z_{h}^{1}\|\left(\|\nabla X_{h}^{1}\|+\|\nabla Y_{h}^{1}\|\right)\left(\nabla Y_{h}^{1},\nabla Z_{h}^{1}\right)
			\end{equation*}
			Now Cauchy-Schwarz inequality and \eqref{S1.21} yields
			\begin{equation*}
			\mathbb{D}^{\alpha}_{t}\|Z_{h}^{1}\|^{2}+\left(2m_{0}-4L_{M}\tilde{K}_{1}^{2}\right)\|\nabla Z_{h}^{1}\|^{2}\leq 0.
			\end{equation*}
			Then (H2) concludes the proof.
		\end{proof}

	\begin{thm}\label{5.3}
		Under the assumption of (H1), (H2), and (H3) the solution  $u_{h}^{n}~ (n\geq 2)$ of $(E_{\alpha})$ satisfy the following a priori bound
		\begin{equation}\label{S1.24}
		\begin{aligned}
		\max_{2\leq m \leq N}\|u_{h}^{m}\|^{2}+k^{\alpha}\sum_{n=2}^{N}p_{N-n}\|\nabla u_{h}^{n}\|^{2}
		\lesssim \left(k^{\alpha}\sum_{n=2}^{N}p_{N-n}\|f_{h}^{n}\|^{2}+\|\nabla u_{h}^{0}\|^{2}\right)\approx \tilde{K_{2}}.
		\end{aligned}
		\end{equation}
			\end{thm}
		\begin{proof}
			Put $v_{h}=u_{h}^{n}$ for $n\geq 2$ in $(E_{\alpha})$ and using (H2), (H3) along with Cauchy-Schwarz and  Young's Inequality we get 
			\begin{equation}\label{S1.23}
			\frac{k^{-\alpha}}{\Gamma(2-\alpha)}\sum_{j=1}^{n}a_{n-j}\left(\|u_{h}^{j}\|^{2}-\|u_{h}^{j-1}\|^{2}\right)+\|\nabla u_{h}^{n}\|^{2}\lesssim  \left(\|f_{h}^{n}\|^{2}+\|u_{h}^{n}\|^{2}+\sum_{j=1}^{n-1}w_{nj}\|\nabla u_{h}^{j}\|^{2}\right).
			\end{equation}
			Now multiply the equation \eqref{S1.23} by discrete convolution $P_{m-n}$ defined in Lemma \ref{L1.2} and take summation from $n=1$ to $m$ to obtain 
			\begin{equation*}
			\begin{aligned}
			&\sum_{n=1}^{m}p_{m-n}\sum_{j=1}^{n}a_{n-j}\left(\|u_{h}^{j}\|^{2}-\|u_{h}^{j-1}\|^{2}\right)+k^{\alpha}\Gamma(2-\alpha)\sum_{n=1}^{m}p_{m-n}\|\nabla u_{h}^{n}\|^{2}\\
			&\lesssim k^{\alpha}\Gamma(2-\alpha)\left(\sum_{n=1}^{m}p_{m-n}\|f_{h}^{n}\|^{2}+\sum_{n=1}^{m}p_{m-n}\|u_{h}^{n}\|^{2}+\sum_{n=1}^{m}p_{m-n}\sum_{j=1}^{n-1}w_{nj}\|\nabla u_{h}^{j}\|^{2}\right).
			\end{aligned}
			\end{equation*}
			 Interchanging the summation and \eqref{2.81} yields
			\begin{equation*}
			\begin{aligned}
		(1-\alpha_{0})	\|u_{h}^{m}\|^{2}+k^{\alpha}\sum_{n=1}^{m}p_{m-n}\|\nabla u_{h}^{n}\|^{2}
			&\lesssim \sum_{n=1}^{m-1}\left(k^{\alpha}p_{m-n}\|u_{h}^{n}\|^{2}+k_{1}k^{\alpha}\sum _{j=1}^{n}p_{n-j}\|\nabla u_{h}^{j}\|^{2}\right)\\
			&+k^{\alpha}\sum_{n=1}^{m}p_{m-n}\|f_{h}^{n}\|^{2}+\|u_{h}^{0}\|^{2}.
			\end{aligned}
			\end{equation*}
			Now for sufficiently small $k^{\alpha} <\frac{1}{\Gamma{(2-\alpha)}}$ we have 
				\begin{equation*}
			\begin{aligned}
			\|u_{h}^{m}\|^{2}+k^{\alpha}\sum_{n=1}^{m}p_{m-n}\|\nabla u_{h}^{n}\|^{2}
			&\lesssim \sum_{n=1}^{m-1}\left(k^{\alpha}p_{m-n}+k_{1}\right)\left(\|u_{h}^{n}\|^{2}+k^{\alpha}\sum _{j=1}^{n}p_{n-j}\|\nabla u_{h}^{j}\|^{2}\right)\\
			&+k^{\alpha}\sum_{n=1}^{m}p_{m-n}\|f_{h}^{n}\|^{2}+\|u_{h}^{0}\|^{2}.
			\end{aligned}
			\end{equation*}
			Now  Discrete Gr\"{o}nwall's inequality and \eqref{2.91} concludes the proof.
		\end{proof}
		
	\begin{subsection}{Proof of the Theorem \ref{J1}}
	\begin{proof} 	(Error Estimate for $e^{1}=u(t_{1})-u_{h}^{1}$)
		 Substitute $u_{h}^{1}=W^{1}-\theta^{1}$ for $n=1$ in $(E_{\alpha})$ and using weak formulation $(W_{\alpha})$  along with Ritz-Volterra projection $W$ at $t_{1}$ we get 
		\begin{equation}\label{S1.271}
		\begin{aligned}
		\left(\mathbb{D}^{\alpha}_{t}\theta^{1},v_{h}\right)&+\left(M\left(x_{h},t_{1},\|\nabla u_{h}^{1}\|^{2}\right)\nabla \theta^{1},\nabla v_{h}\right)-kB\left(t_{1},t_{0},\theta^{0},v_{h}\right)\\
		&=\left(\mathbb{D}^{\alpha}_{t}W^{1}-~^{C}D^{\alpha}_{t_{1}}u,v_{h}\right)-kB\left(t_{1},t_{0},W^{0},v_{h}\right)+\int_{0}^{t_{1}}B(t_{1},s,W,v_{h})ds\\
		&+\left(\left[M\left(x_{h},t_{1},\|\nabla u_{h}^{1}\|^{2}\right)-M\left(x_{h},t_{1},\|\nabla u^{1}\|^{2}\right)\right]\nabla W^{1},\nabla v_{h}\right).
		\end{aligned}
		\end{equation}
		Set $v_{h}=\theta^{1}$ in \eqref{S1.271} and using (H2), (H3) with Cauchy-Schwarz inequality and Young's inequality, we get
	 \begin{equation}\label{S1}
	 \begin{aligned}
	 \left(\mathbb{D}^{\alpha}_{t}\theta^{1},\theta^{1}\right)+\|\nabla \theta^{1}\|^{2}&\lesssim \|\mathbb{Q}^{1}\|^{2}+\|~^{C}{D}^{\alpha}_{t_{1}}\rho\|^{2}+\|q^{1}(W)\|^{2}+
	 \|\nabla \rho^{1}\|^{2}.
	 \end{aligned}
	 \end{equation}
		Where $\mathbb{Q}^{1}=\mathbb{D}^{\alpha}_{t}W^{1}-~^{C}D^{\alpha}_{t_{1}}W$ is the truncation error defined in \eqref{2.9} which satisfies $\|\mathbb{Q}^{1}\|\lesssim k^{2-\alpha}$ \cite{lin2007finite} and  $q^{1}(W)$ is  quadrature error for rectangle rule on a single interval of length $k$ which  satisfies $\|q^{1}(W)\|\lesssim  k^{2}$. Thus using  Theorem \ref{thm2.8}, we conclude
		\begin{equation}
		\|\theta^{1}\|^{2}+k^{\alpha}\|\nabla \theta^{1}\|^{2}\lesssim \left(k^{2}+h\right)^{2}.
		\end{equation}
 \noindent 	(Error Estimate for $e^{n}=u(t_{n})-u_{h}^{n},~~n\geq 2$) Put $u_{h}^{n}=W^{n}-\theta^{n}$ in $(E_{\alpha})$ and follow the same lines as for $n=1$, we arrive at 
		\begin{equation}\label{S1.33}
		\begin{aligned}
		\sum_{j=1}^{n}a_{n-j}\left[\|\theta^{j}\|^{2}-\|\theta^{j-1}\|^{2}\right]+k^{\alpha}\|\nabla \theta^{n}\|^{2}&\lesssim k^{\alpha}\left(\|\mathbb{Q}^{n}\|^{2}+\|~^{C}D_{t_{n}}^{\alpha}\rho\|^{2}+\sum_{j=1}^{n-1}w_{nj}\|\nabla \theta^{j}\|^{2}\right)\\
		&+k^{\alpha}\left(\|\nabla \bar{u}_{h}^{n-1}-\nabla u^{n}\|^{2}	+\|q^{n}(W)\|^{2}\right),
		\end{aligned}
		\end{equation}
		\noindent 	where $q^{n}(W)$ be the quadrature error operator on $H^{1}_{0}(\Omega)$, defined by 
		\begin{equation}\label{S1.32}
		q^{n}(W)(v)=\int_{0}^{t_{n}}B(t_{n},s,W,v)-\sum_{j=1}^{n-1}w_{nj}B\left(t_{n},t_{j},W^{j},v\right)ds,
		\end{equation}
		which satisfies $\|q^{n}(W)\|\lesssim k^{2}$ \cite{pani1992numerical} and  $\mathbb{Q}^{n}=\mathbb{D}^{\alpha}_{t}W^{n}-~^{C}D^{\alpha}_{t_{n}}W$ that satisfies $\|\mathbb{Q}^{n}\|\lesssim k^{2-\alpha}$\cite{lin2007finite}. Thus using Theorem \ref{thm2.8}, we obtain 
		\begin{equation}\label{S1.37}
		\begin{aligned}
		\sum_{j=1}^{n}a_{n-j}\left[\|\theta^{j}\|^{2}-\|\theta^{j-1}\|^{2}\right]+k^{\alpha}\|\nabla \theta^{n}\|^{2}&\lesssim k^{\alpha}\left(k^{2-\alpha}+h^{2}+h+k^{2}+k^{2}\right)^{2}+k^{\alpha}\|\theta^{n}\|^{2}\\
		&+k^{\alpha}\left(\sum_{j=1}^{n-1}k_{1}\|\nabla \theta^{j}\|^{2}+\|\nabla \theta^{n-1} \|^{2}+\|\nabla \theta^{n-2}\|^{2}\right).
		\end{aligned}
		\end{equation}
		Now follow the same arguments as we prove estimate \eqref{S1.24} to conclude the result.
	\end{proof}
\end{subsection}
\end{section}
\section{Fractional Crank-Nicolson-Galerkin FEM based on L2-1$_{\sigma}$ scheme}
In this section, we show that our proposed scheme $(F_{\alpha})$ achieve the optimal rate of convergence in time. The existence and uniqueness of solutions for fully discrete formulation $(F_{\alpha})$ is proved along the similar lines as in the proof of Theorem \ref{T1.4}, therefore omitted here.

\begin{thm}\label{6.1} (\textbf{A priori bound}) Under the assumptions of (H1), (H2), and (H3) the solutions $u_{h}^{n}$ of $(F_{\alpha})$ satisfy the following a priori bounds 
		\begin{equation}\label{S2.24}
			\begin{aligned}
			\max_{1\leq m \leq N}\|u_{h}^{m}\|^{2}+k^{\alpha}\sum_{n=1}^{N}\tilde{p}_{N-n}^{(N)}\|\nabla u_{h}^{n}\|^{2}
			\lesssim \left(k^{\alpha}\sum_{n=1}^{N}\tilde{p}_{N-n}^{(N)}\|f_{h}^{n-\frac{\alpha}{2}}\|^{2}+\|\nabla u_{h}^{0}\|^{2}\right)
			\end{aligned}
			\end{equation}
	\end{thm}
	
	\begin{proof} 
		The a priori bounds for $u_{h}^{1}$  is similar as in Theorem \ref{T1.4}. To obtain a priori bounds for $u_{h}^{n}~( n\geq 2)$,
		put $v_{h}=u_{h}^{n}$ for $n\geq 2$ in $(F_{\alpha})$ and using (H2), (H3) along with the following identity 
		\begin{equation}
		\|\nabla u_{h}^{n}+\nabla u_{h}^{n-1}\|^{2}=\|\nabla u_{h}^{n}\|^{2}+\|\nabla u_{h}^{n-1}\|^{2}+2\left(\nabla u_{h}^{n},\nabla u_{h}^{n-1}\right),
		\end{equation}
		 we arrive at 
		\begin{equation}\label{S2.23}
		\begin{aligned}
		\frac{k^{-\alpha}}{\Gamma(2-\alpha)}\sum_{j=1}^{n}\tilde{c}_{n-j}^{(n)}\left(\|u_{h}^{j}\|^{2}-\|u_{h}^{j-1}\|^{2}\right)+\|\nabla u_{h}^{n}\|^{2}&\lesssim  \|f_{h}^{n-\frac{\alpha}{2}}\|^{2}+\sum_{j=1}^{n-1}\tilde{w}_{nj}\|\nabla u_{h}^{j}\|^{2}\\
		&+\|\nabla u_{h}^{n-1}\|^{2}
		\end{aligned}
		\end{equation}
		Now follow the same arguments as we prove Theorem \ref{5.3} to obtain \eqref{S2.24}.
	\end{proof}

\begin{subsection}{Proof of the Theorem \ref{J2}}
\begin{proof}
	Substitute $u_{h}^{n}=W^{n}-\theta^{n}$ in $(F_{\alpha})$ and the evaluation of weak formulation $(W_{\alpha})$ and Ritz-Volterra projection $W$ at $t_{n-\frac{\alpha}{2}}$ and then put $v_{h}=\theta^{n}$, we get 
	\begin{equation}\label{2.21}
	\begin{aligned}
	\tilde{\mathbb{D}}^{\alpha}_{t_{n-\frac{\alpha}{2}}}\|\theta^{n}\|^{2}+\|\nabla \theta^{n}\|^{2}&\lesssim \|\tilde{\mathbb{D}}^{\alpha}_{t_{n-\frac{\alpha}{2}}}W^{n}-~^{C}D^{\alpha}_{t_{n-\frac{\alpha}{2}}}u\|^{2}+\sum_{j=1}^{n-1}\tilde{w}_{nj}\|\nabla \theta^{j}\|^{2}\\
	&+\|\nabla \bar{u}_{h}^{n-1,\alpha}-\nabla u^{n-\frac{\alpha}{2}}\|^{2}+\|\nabla \hat{W}^{n,\alpha}-\nabla W^{n-\frac{\alpha}{2}}\|^{2}\\
	&	+\|\tilde{q}^{n-\frac{\alpha}{2}}(W)\|^{2}+\|\nabla \theta^{n-1}\|^{2}
	\end{aligned}
	\end{equation}
	where  $\tilde{q}^{n-\frac{\alpha}{2}}(W)$ be the quadrature error operator on $H^{1}_{0}(\Omega)$, defined by 
	\begin{equation}\label{S2.32}
	\tilde{q}^{n-\frac{\alpha}{2}}(W)(v)=\int_{0}^{t_{n-\frac{\alpha}{2}}}B(t_{n-\frac{\alpha}{2}},s,W,v)-\sum_{j=1}^{n-1}\tilde{w}_{nj}B\left(t_{n-\frac{\alpha}{2}},t_{j},W^{j},v\right)ds,
	\end{equation}
	that satisfies $\|	\tilde{q}^{n-\frac{\alpha}{2}}(W)\|\lesssim k^{2}$ \cite{pani1992numerical}
	and Let $\tilde{\mathbb{Q}}^{n-\frac{\alpha}{2}}=\tilde{\mathbb{D}}^{\alpha}_{t_{n-\frac{\alpha}{2}}}W^{n}-~^{C}D^{\alpha}_{t_{n-\frac{\alpha}{2}}}W$ is the truncation error which satisfies $\|\tilde{\mathbb{Q}}^{n-\frac{\alpha}{2}}\|\lesssim k^{3-\alpha}$ \cite{alikhanov2015new}. Thus we get 
	\begin{equation}\label{S2.37}
	\begin{aligned}
	\sum_{j=1}^{n}\tilde{c}_{n-j}^{(n)}\left[\|\theta^{j}\|^{2}-\|\theta^{j-1}\|^{2}\right]+k^{\alpha}\|\nabla \theta^{n}\|^{2}&\lesssim k^{\alpha}\left(h^{2}+k^{3-\alpha}+h+k^{2}+k^{2}\right)^{2}\\
	&+k^{\alpha}\left(\sum_{j=1}^{n-1}k_{1}\|\nabla \theta^{j}\|^{2}+\|\nabla \theta^{n-1} \|^{2}+\|\nabla \theta^{n-2}\|^{2}\right).
	\end{aligned}
	\end{equation}
\noindent Now follow the same arguments as we prove Theorem \ref{J1} and Theorem \ref{6.1} to obtain 
	\begin{equation*}
	\|\theta^{m}\|^{2}+k^{\alpha}\sum_{n=1}^{m}\tilde{p}_{m-n}^{(m)}\|\nabla \theta^{n}\|^{2}
	\lesssim \left(h+k^{\min\{3-\alpha,2\}}\right)^{2}
	\end{equation*}
	Use the similar approach for the case $n=1$ and then by  triangle inequality and $\min\{3-\alpha,2\}=2$,  we obtain the required result.
\end{proof}
\end{subsection}
\section{Numerical experiments}
 In this section, we implement the theoretical  results obtained from fully discrete formulation for the model $(D_\alpha)$. For space discretization linear hat basis functions  say $\{\psi_{1}, \psi_{2}, \dots, \psi_{J}\}$ for $J$ dimensional subspace $X_{h}$ of $H_{0}^{1}(\Omega)$ is used, then numerical solution $u_{h}^{n}$ for $(D_{\alpha})$  at any time $t_{n}$ in $[0,T]$ is written as  
\begin{equation}\label{7.1}
u_{h}^{n}=\sum_{i=1}^{J}\alpha_{i}^{n}\psi_{i},
\end{equation}
where $\alpha^{n}=\left(\alpha_{1}^{n}, \alpha_{2}^{n}, \alpha_{3}^{n}, \dots, \alpha_{J}^{n} \right)$ is to be determined.
Let us denote the error estimates that we have proved in Theorems \ref{J1} and  \ref{J2} by 
\begin{equation*}
\begin{aligned}
\text{\bf Error-1}&=\max_{1\leq n \leq N}\|u(t_{n})-u_{h}^{n}\|+\left(k^{\alpha} \sum_{n=1}^{N}p_{N-n}\|\nabla u(t_{n})-\nabla u_{h}^{n}\|^{2}\right)^{1/2},
\end{aligned}
\end{equation*} 
and 
\begin{equation*}
\begin{aligned}
\text{\bf Error-2}&=\max_{1\leq n \leq N}\|u(t_{n})-u_{h}^{n}\|+\left(k^{\alpha} \sum_{n=1}^{N}\tilde{p}_{N-n}^{(N)}\|\nabla u(t_{n})-\nabla u_{h}^{n}\|^{2}\right)^{1/2},
\end{aligned}
\end{equation*} 
respectively.
\begin{exam} We consider the problem $(D_{\alpha})$ on $\Omega \times [0,T]$  where $\Omega = [0,1]\times[0,1]$ and $T=1$ with the following data
\begin{enumerate}
	\item $M\left(x,y,t,\|\nabla u\|^{2}\right)=a(x,y)+b(x,y)\|\nabla u\|^{2}$ with $a(x,y)=x^{2}+y^{2}+1$ and $b(x,y)=xy$. This type of diffusion coefficient $M$ has been studied by medeiros et al. for numerical purpose in \cite{medeiros2012perturbation}.
	\item $b_{2}(t,s,x,y)=e^{t-s}$
	$\begin{bmatrix}
		-1 & 0\\
		0 &-1
		\end{bmatrix}; ~ b_{1}(t,s,x,y)=b_{0}(t,s,x,y)=0.$
		\item	Source term  
	 $f(x,t)=f_{1}(x,t)+f_{2}(x,t)+f_{3}(x,t)$ with
	 \begin{equation*}
	     \begin{aligned}
	       f_{1}(x,t)&=\frac{2}{\Gamma(3-\alpha)}t^{2-\alpha}(x-x^{2})(y-y^{2}),\\   
	       f_{2}(x,t)&=2(x+y-x^{2}-y^{2})\left(t^{2}x^{2}+t^{2}y^{2}+\frac{xyt^{6}}{45}-2t-2+2e^{t}\right),\\
	       f_{3}(x,t)&=t^{2}(2x-1)(y-y^{2})\left(2x+\frac{yt^{4}}{45}\right)+t^{2}(2y-1)(x-x^{2})\left(2y+\frac{xt^{4}}{45}\right).
	     \end{aligned}
	 \end{equation*}
	 Then the exact solution of $(D_{\alpha})$ is  
		$u=t^{2}\left(x-x^{2}\right)\left(y-y^{2}\right).$	
\end{enumerate}
\end{exam}
 \noindent This example can be used to model the diffusion of a substance in a domain $\Omega$, see \cite{barbeiro2011h1}. \\

\noindent  We obtain error and convergence rate in space direction as well as in time direction for different values of mesh parameters $h$ and $k$ and different values of $\alpha$. The convergence rate is calculated through the following $\log$ vs. $\log$ formula 
\begin{equation*}
   \text{ Convergence rate} =\begin{cases}\frac{\log(E(\tau,h_{1})/E(\tau,h_{2}))}{\log(h_{1}/h_{2})};& \text{In space direction}\\
   \frac{\log(E(\tau_{1},h)/E(\tau_{2},h))}{\log(\tau_{1}/\tau_{2})};& \text{In time direction }
\end{cases}
\end{equation*}
where $E(\tau,h)$ is the error in respective norms  at different mesh points.

\noindent \textbf{Backward Euler-Galerkin FEM scheme:} This numerical scheme $(E_{\alpha})$  provides  the convergence rate as $O(h+k^{2-\alpha}),~$see Theorem \ref{J1}. To observe this order of convergence numerically, we run the MATLAB code at different iterations by  setting $h \simeq k^{2-\alpha}$. Here $h$ is taken as the area of the triangle in the triangulation of domain $\Omega=[0,1]\times [0,1]$. For next iteration, we join the midpoint of each edge and make another triangulation as presented in Figures 1-3. In this way we collect the numerical results upto five iterations to support our theoretical estimates established in Theorem \ref{J1}.

\begin{figure}[H]
	\centering
\includegraphics[scale=0.2]{mesh1}
\caption{Iteration no 1}
\end{figure}
\begin{figure}[H]
	\centering
\includegraphics[scale=0.2]{mesh2}
\caption{Iteration no 2}
\end{figure}
\begin{figure}[H]
	\centering
\includegraphics[scale=0.2]{mesh3}
\caption{Iteration no 3}
\end{figure}

\noindent From  Tables \ref{table3}, \ref{table31}, and \ref{table361}, we conclude that the convergence rate is linear in space and $(2-\alpha)$ in the time direction, as we have proved in Theorem \ref{J1}. We also observe that as $\alpha$ tending  to 1 then convergence rate is  approaching  1 in the  time direction which is same as we have  established in \cite{kumar2020finite} for classical diffusion case. 
  \begin{table}[htbp]
	\centering
	\caption{\emph{Error and Convergence Rates in space-time direction for $\alpha=0.25$ }}
	\begin{tabular}{|c|c|c|c|}
		\hline
		\textbf{Iteration No.} & \textbf{Error-1} & \textbf{Rate in Space} & \textbf{Rate in Time} \\
		\hline
		1 & 3.04e-02 & - & -  \\
		\hline
		2 & 1.50e-02& 1.0175 & 1.7807  \\
		\hline
		3 &  7.60e-03 & 0.9824 & 1.7192 \\
		\hline
		4 & 3.82e-03 & 0.9909 & 1.7342\\
		\hline
		5 & 1.89e-03 & 1.0166 & 1.7790\\
		\hline
	\end{tabular}%
	\label{table3}%
\end{table}
\begin{table}[htbp]
	\centering
	\caption{\emph{Error and Convergence Rates in space-time direction for $\alpha=0.5$ }}
	\begin{tabular}{|c|c|c|c|}
		\hline
		\textbf{Iteration No.} & \textbf{Error-1} & \textbf{Rate in Space} & \textbf{Rate in Time} \\
		\hline
		1 & 2.63e-02 & - & -  \\
		\hline
		2 & 1.35e-02& 0.9568 & 1.4352  \\
		\hline
		3 &  6.53e-03 & 1.0521 & 1.5781 \\
		\hline
		4 & 3.20e-03 & 1.0285 & 1.5428\\
		\hline
		5 & 1.58e-03 & 1.0141 & 1.5212\\
		\hline
	\end{tabular}%
	\label{table31}%
\end{table}
\begin{table}[htbp]
	\centering
	\caption{\emph{Error and Convergence Rates in space-time direction for $\alpha=0.75$ }}
	\begin{tabular}{|c|c|c|c|}
		\hline
		\textbf{Iteration No.} & \textbf{Error-1} & \textbf{Rate in Space} & \textbf{Rate in Time} \\
		\hline
		1 & 2.23e-02 & - & -  \\
		\hline
		2 & 1.09e-02& 1.0340 & 1.2925  \\
		\hline
		3 &  5.33e-03 & 1.0345 & 1.2931 \\
		\hline
		4 & 2.65e-03 & 1.0087 & 1.2960\\
		\hline
		5 & 1.29e-03 & 1.0331 & 1.2914\\
		\hline
	\end{tabular}%
	\label{table361}%
\end{table}

\noindent \textbf{Fractional Crank-Nicolson-Galerkin  FEM scheme:} This numerical scheme converges to the solution with accuracy $O(h+k^{2})$. Here we set $h\simeq k^{2}$ to conclude the convergence rates in space-time direction. Here iteration numbers have the same meaning as in the previous case $(E_{\alpha})$. From the Tables \ref{table32}, \ref{table322}, and \ref{table323}, we deduce  that the convergence rate is linear in space and quadratic in the time direction, as we have predicted  in Theorem \ref{J2}.\\
\begin{table}[htbp]
	\centering
	\caption{\emph{Error and Convergence Rates in space-time direction for $\alpha=0.25$  }}
	\begin{tabular}{|c|c|c|c|}
		\hline
		\textbf{Iteration No.} & \textbf{Error-2} & \textbf{Rate in Space} & \textbf{Rate in Time} \\
		\hline
		1 & 2.32e-02 & - & -  \\
		\hline
		2 & 1.10e-02& 1.0679 & 2.1359  \\
		\hline
		3 &  5.38e-03 & 1.0393 & 2.0787 \\
		\hline
		4 & 2.52e-03 & 1.0904 & 2.1809\\
		\hline
		5 & 1.24e-03 & 1.0225 & 2.0451\\
		\hline
	\end{tabular}%
	\label{table32}%
\end{table}
 \begin{table}[htbp]
	\centering
	\caption{\emph{Error and Convergence Rates in space-time direction for $\alpha=0.5$  }}
	\begin{tabular}{|c|c|c|c|}
		\hline
		\textbf{Iteration No.} & \textbf{Error-2} & \textbf{Rate in Space} & \textbf{Rate in Time} \\
		\hline
		1 & 2.42e-02 & - & -  \\
		\hline
		2 & 1.16e-02& 1.0573 & 2.1147  \\
		\hline
		3 &  5.70e-03 & 1.0329 & 2.0658 \\
		\hline
		4 & 2.68e-03 & 1.0873 & 2.1746\\
		\hline
		5 & 1.32e-03 & 1.0207 & 2.0414\\
		\hline
	\end{tabular}%
	\label{table322}%
\end{table}

\begin{table}[htbp]
	\centering
	\caption{\emph{Error and Convergence Rates in space-time direction for $\alpha=0.75$  }}
	\begin{tabular}{|c|c|c|c|}
		\hline
		\textbf{Iteration No.} & \textbf{Error-2} & \textbf{Rate in Space} & \textbf{Rate in Time} \\
		\hline
		1 & 1.97e-02 & - & -  \\
		\hline
		2 & 9.05e-03& 1.1284 & 2.2569  \\
		\hline
		3 &  4.25e-03 & 1.0873 & 2.1746 \\
		\hline
		4 & 1.94e-03 & 1.1335 & 2.2671\\
		\hline
		5 & 9.30e-04 & 1.0614 & 2.1228\\
		\hline
	\end{tabular}%
	\label{table323}%
\end{table}

 \noindent Now, we plot the graph of an approximate solution as well as an exact solution in figure 1 using backward Euler-Galerkin FEM based on the L1 type scheme.
\begin{figure}[H]
	\centering
\includegraphics[scale=0.6]{123}
\caption{Approximate solution(L.H.S) and Exact solution(R.H.S) at $T=1$  and $\alpha =0.5$.}
\end{figure}

\section{Conclusions}
 In this work, we have established the well-posedness of weak formulation for time fractional integro-differential equations of Kirchhoff type for non-homogeneous materials. We have derived error estimates for semi-discrete formulation by discretizing the domain in space direction using a conforming FEM and obtained convergence rate $O(h)$ in the energy norm. Further, to obtain the numerical solution  for this class of equations, we have developed and analyzed two different kinds of efficient numerical schemes. First, we derived that convergence rate is ($2$-$\alpha$) in the time direction, which is better than first order but not optimal. Next, to obtain  the optimal rate of convergence, we proposed a new Fractional Crank-Nicolson-Galerkin FEM based on the  L2-1$_{\sigma}$ scheme and proved that numerical solutions converge with the $O(h+k^{2})$ rate of accuracy. Finally, numerical experiments reveal that theoretical error estimates are sharp.

\end{document}